\documentclass{amsart}
\usepackage[utf8]{inputenc}
\usepackage{amssymb}

\usepackage{color}
\usepackage{enumitem}

\usepackage[colorlinks=true, pdfstartview=FitV, linkcolor=blue, pagebackref, citecolor=blue, urlcolor=blue]{hyperref}

\makeatletter
\@namedef{subjclassname@2020}{\textup{2020} Mathematics Subject Classification}
\makeatother

\numberwithin{equation}{section}
\theoremstyle{plain}

\newtheorem{lem}[equation]{Lemma}
\newtheorem{cor}[equation]{Corollary}
\newtheorem{prop}[equation]{Proposition}

\theoremstyle{definition}
\newtheorem{defn}[equation]{Definition}
\newtheorem{eg}[equation]{Example}

\theoremstyle{remark}
\newtheorem{rmk}[equation]{Remark}

\DeclareMathOperator{\Sym}{S}
\newcommand{\Herm}{\mathcal{H}}
\DeclareMathOperator{\im}{im}
\DeclareMathOperator{\ad}{ad}
\DeclareMathOperator{\Ad}{Ad}
\DeclareMathOperator{\rank}{rank}

\DeclareMathOperator{\Unit}{\mathcal{U}}

\newcommand{\Gm}{\mathbb{G}_{\mathrm{m}}}

\newcommand{\iso}{\xrightarrow{\sim}}

\newcommand{\slot}{\overline{\ }}

\newcommand{\la}{\lambda}
\newcommand{\s}{\sigma}

\newcommand{\kalg}{\overline{k}}

\DeclareMathOperator{\Proj}{proj}

\newcommand{\ot}{\otimes} % tensor product

\newcommand{\hv}{h^\vee} % dual Coxeter number

\newcommand{\am}{\mathbin{\diamond}} % multiplication in A(\g)
\newcommand{\jord}{\mathbin{\bullet}} % Jordan product
\newcommand{\Mm}{\mathbin{\ast}}

\newcommand{\g}{\mathfrak{g}}
\newcommand{\h}{\mathfrak{h}}

\renewcommand{\sl}{\mathfrak{sl}}
\newcommand{\gl}{\mathfrak{gl}}
\newcommand{\lsub}{\mathfrak{l}}

\newcommand{\GL}{\mathrm{GL}}
\newcommand{\SO}{\mathrm{SO}}
\newcommand{\Sp}{\mathrm{Sp}}
\newcommand{\SL}{\mathrm{SL}}

\newcommand{\bil}[2]{\langle #1 | #2 \rangle}
\newcommand{\R}{\mathbb{R}}
\newcommand{\C}{\mathbb{C}}
\newcommand{\F}{\mathbb{F}}
\newcommand{\Z}{\mathbb{Z}}
\newcommand{\Q}{\mathbb{Q}}

\newcommand{\eo}{e_{\otimes}}

\newcommand{\es}{e_{\mathrm{S}}}

\newcommand{\sumroots}{\delta}

\newcommand{\tbil}{\tau}

\newcommand{\counit}{\varepsilon}

\DeclareMathOperator{\car}{char}
\DeclareMathOperator{\Lie}{Lie}
\DeclareMathOperator{\Tr}{Tr}

\DeclareMathOperator{\End}{End}

\DeclareMathOperator{\Aut}{Aut}
\DeclareMathOperator{\Id}{Id}

\newcommand{\hst}{{\widetilde{\alpha}}} % highest root

\title[A class of continuous non-associative algebras]{A class of continuous non-associative algebras arising from algebraic groups including $E_8$}
\author{Maurice Chayet}

\author{Skip Garibaldi}

\begin{document}

\begin{abstract}
We give a construction that takes a simple linear algebraic group $G$ over a field and produces a commutative, unital, and simple non-associative algebra $A$ over that field.  Two attractions of this construction are that (1) when $G$ has type $E_8$, the algebra $A$ is obtained by adjoining a unit to the 3875-dimensional representation and  (2) it is effective, in that the product operation on $A$ can be implemented on a computer.  A description of the  algebra in the $E_8$ case has been requested for some time, and interest has been increased by the recent proof that $E_8$ is the full automorphism group of that algebra.  The algebras obtained by our construction have an unusual Peirce spectrum. 
\end{abstract}

\subjclass[2020]{Primary 17B25; Secondary 17D99, 20G41}

\maketitle

\setcounter{tocdepth}{1}
\tableofcontents

%%%%%%%%%%%%%%%%%%%%%%%%%%%%%%%%%%%%%%%%%%%%%%%%%%%%%%%%%
\section{Introduction}

We present a construction that takes an absolutely simple linear algebraic group $G$ over a field $k$ and produces a commutative, unital non-associative algebra that we denote by $A(\g)$.  As a vector space, $A(\g)$ is a subspace of the symmetric square $\Sym^2 \g$ of the Lie algebra $\g$ of $G$.  We give an explicit formula \eqref{prod.full} for the product on $A(\g)$, which makes our construction effective in the sense that one can perform computer calculations (\S\ref{final.sec}), although we do not rely on computer calculations for our results. 
There is a natural symmetric bilinear form on $A(\g)$, which we show is associative (\S\ref{assoc.sec}) and nondegenerate (\S\ref{ndeg.sec})  and positive-definite in case $k = \R$ and $G$ is compact.  We leverage this and the structure of $A(\g)$ as a representation of $G$ to show that it is a simple $k$-algebra (Cor.~\ref{metrized.simple}).

This work may be viewed in the context of the general problem of describing exceptional groups as automorphism groups, which dates back to Killing's 1889 paper \cite{Killing2}.  As an example, the Lie group $G_2$ can be viewed as the automorphism group of the octonions (E.~Cartan \cite{Cartan:real}), the stabilizer of a cross product on $\R^7$ (F.~Engel, \cite{Engel}, \cite{Herz:alt}), or the symmetry group for a ball of radius 1 rolling on a fixed ball of radius 3 without slipping or twisting (E.~Cartan, \cite{BaezH:rolling}).  For $E_8$, it is known from \cite{GG:simple} that it is the identity component of the stabilizer of an octic form on the Lie algebra $\mathfrak{e}_8$ and that it is the automorphism group of the $E_8$-invariant algebra on its 3875-dimensional irreducible representation.  (See also \cite[\S3]{G:e8} or \cite[\S16]{GG:simple} for broader discussions of other realizations.)  The latter description of $E_8$ is known to be true even though this algebra is not well-understood; this paper gives explicit and effective formulas for calculating in the algebra.  We note here that $\Aut(A(\mathfrak{e}_8)) = E_8$, see Prop.~\ref{irred.prop}.

The algebras $A(\g)$ constructed here are ``non-generic'' in the sense of \cite{KrasnovTkachev}, meaning that $A(\g) \otimes \kalg$ contains infinitely many idempotents, for $\kalg$ an algebraic closure of $k$.  Moreover,
the Peirce spectrum of $A(\g) \otimes \kalg$, i.e., the union of the set of eigenvalues for left multiplication by $u$ as $u$ varies over idempotents of $A(\g) \otimes \kalg$, is infinite, see Example \ref{cont}.  In case $k = \R$  and apart from types $A_1$ and $A_2$, this collection of eigenvalues includes the unit interval, and consequently one might call these algebras ``continuous'' as we have done in the title of the article.  We remark that this kind of situation --- where a popular property holds for generic cases but fails for a structure naturally associated with a simple algebraic group $G$ --- is familiar from the study of homogeneous $G$-invariant polynomials.  In that setting, a generic homogeneous polynomial is non-singular, yet $G$-invariant polynomials of degree $\ge 3$ are singular \cite{OS}, such as the determinant on $n$-by-$n$ matrices.

Ignoring some very small cases, the algebras $A(\g)$ are not power-associative.  This is not a defect of our construction.  We show that even if one alters the one choice we made in the construction, the resulting algebra would still not be power-associative, see Prop.~\ref{nonpa}\eqref{nonpa.nonpa} and Remark \ref{pa.crazy}.

In the penultimate section, \S\ref{const2}, we give an alternative realization of $A(\g)$ inside $\End(V)$ where $V$ is the natural module for $G$ of type $A_2$, $G_2$, $F_4$, $E_6$, or $E_7$.  We use this alternative realization to explicitly compute $A(\sl_3)$ (Example \ref{A2}).  We conclude with an appendix (\S\ref{unitizing.sec}) giving various results about adding a unit to a non-associative algebra that we refer to in the body of the paper.

\smallskip

We work over a rather general field $k$ and do not assume that $G$ is split, although our results are new already in the case where $k$ is the complex numbers $\C$.  The additional generality comes at hardly any cost due to the tools we use.  Readers who are not interested in the full generality are invited to assume throughout that $k = \C$ and identify the symbols $H^0(\la) = V(\la) = L(\la)$.

An unusual feature of our work is that the case where $G$ is of type $E_8$ is less complicated than other $G$ in several ways, at least when $k = \C$.  For $E_8$, one has extra formulas to use, such as Okubo's Identity $\Tr(\pi(X)^4) = \alpha_\pi K(X,X)^4$ (Lemma \ref{2.4a}, which holds for all $G$ of exceptional type) and a similar identity for $\Tr(\pi(X)^6)$ (which holds for type $E_8$).    Another way that $E_8$ is less complicated is that the Molien series $1 + t^2 + t^3 + 3t^4 + 3t^5 + 10t^6 + 16t^7 +  \cdots$ for $E_8$ acting on its 3875-dimensional representation $V$ has coefficients no greater than the Molien series for the corresponding representations of other groups of type $E$, $F$, or $G$.  Yet another way is that the second symmetric power $\Sym^2 V$ is a sum of 6 irreducible terms, which is minimal among the types $E$, $F$, and $G$.  

Our original approach to the material in this paper was to focus on the case of $E_8$ and leverage these tools.  In this way, we discovered the product formula on $A(\g)$, and only in hindsight did we see that it was a general construction that worked for all simple $G$.
Due to this inverted approach, preparing this document took more than three years.  Just before we intended to release this work on the arxiv, the paper \cite{DeMedtsVC} appeared, which studies algebras that are almost the same, albeit restricted to the cases where the root system of $G$ is simply laced and $G$ is split and $\car k = 0$, see Remark \ref{DMVC.rmk} below.  Both that article and this one view the algebras as subspaces of $\Sym^2 \g$ and provide an associative symmetric bilinear form (we say $A(\g)$ is metrized, whereas they say Frobenius), but from there our approaches and results diverge.

%\smallskip
%{\small \subsection*{Acknowledgements.} We thank Robert Guralnick and Benedict Gross for helpful conversations, which improved some of the proofs, and anonymous referees for their comments, which improved the exposition.}

%%%%%%%%%%%%%%%%%%%%%%%%%%%%%%%%%%%%%%%%%%%%%%%%%%%%%%%%%
\section{Background material} \label{gen.sec}

Let $k$ be a field of characteristic different from 2 and suppose that $\g$ is a Lie algebra over $k$ whose Killing form, $K$, is nondegenerate.  Then the $k$-algebra  of linear transformations of $\g$, denoted $\End(\g)$, has a  ``transpose'' operator $\top$ given by 
\[ % \begin{equation} \label{1.3}
K(T(X), Y) = K(X, T^\top(Y)) \quad \text{for $T \in \End(\g)$ and $X, Y \in \g$.}
\] %\end{equation}

\subsection*{Identification of representations}
Another way to view the nondegeneracy of $K$ is that it provides a $\g$-equivariant isomorphism of $\g$-representations
\begin{equation} \label{dual.id}
\g \iso \g^* \quad \text{via} \quad X \mapsto K(X,\slot).
\end{equation}
This identification extends to an isomorphism of $\g$-modules 
\begin{equation} \label{end.id}
\g \ot \g \iso \g \ot \g^* = \End(\g).
\end{equation}

As $\car k \ne 2$, the natural surjection of $\g \ot \g$ onto the 2nd symmetric power $\Sym^2 \g$ is split by the map
\begin{equation} \label{sym.emb}
\Sym^2 \g \hookrightarrow \g \ot \g \quad \text{given by\ } XY \mapsto \frac12 (X \ot Y + Y \ot X).
\end{equation}

\begin{defn} \label{P.def}
Define $P \!: \Sym^2 \g \hookrightarrow \End(\g)$ as the composition of \eqref{end.id} with \eqref{sym.emb}.  It is $\g$-equivariant and its image is the space 
\[
\Herm(\g) := \{ T \in \End(\g) \mid T^\top = T \}
\]
of symmetric operators.  We have:
\[
P(XY) = \frac12 \left[ X \ot K(Y, \slot) + Y \ot K(X, \slot) \right] \quad \text{for $X, Y \in \g$.}
\]
\end{defn}

\begin{eg} \label{es.eg}
For  $\{ X_i \}$ a basis of $\g$ and $\{ Y_i \}$ the dual basis with respect to $K$, set $\eo := \sum X_i \ot Y_i \in \g \ot \g$ and $\es := \sum X_i Y_i$, the image of $\eo$ in $\Sym^2 \g$.  Neither $\eo$ nor $\es$ depend on the choice of the $X_i$'s.  Moreover, the identification \eqref{end.id}
sends $\eo \mapsto \Id_\g$, so $P(\es) = \Id_\g$.
\end{eg}

The spaces $\End(\g)$ and $\Herm(\g)$ are Jordan algebras under the 
Jordan product $\jord$ defined by
\begin{equation}  \label{jord.def}
T \jord U := \frac12 (TU + UT)   \quad \text{for $T, U \in \End(\g)$}.
\end{equation}

\begin{eg} \label{jord.eg}
For $X_1, X_2, X_3, X_4 \in \g$, we have:
\begin{multline*}
P(X_1 X_2) \jord P(X_3 X_4) = \frac14 \left[ K(X_1, X_3) P(X_2 X_4) + K(X_1, X_4) P(X_2 X_3)\right. \\
\left. + K(X_2, X_3) P(X_1 X_4) + K(X_2, X_4) P(X_1 X_3) \right].
\end{multline*}
Therefore, for any subspace $\lsub$ of $\g$, $P(\Sym^2 \lsub)$ is a Jordan subalgebra of $\Herm(\g)$.
If $K(X,X) \ne 0$, then the element $P(X^2)/K(X, X)$ is an idempotent in the Jordan algebra.   

Suppose that furthermore $\lsub$ has an orthonormal basis $X_1, \ldots, X_r$.  Then for $i \ne j$, $P(X_i^2) \jord P(X_j^2) = 0$ and $P(X_i^2) \jord P(X_i X_j) = \frac12 P(X_i X_j)$.  In particular, $\sum P(X_i^2)$ is the identity element in $P(\Sym^2 \lsub)$.
\end{eg}

\subsection*{Global hypotheses}  We now add hypotheses that will be assumed until the start of Appendix \ref{unitizing.sec}.  We will assume that \emph{$\g$ is the Lie algebra of an absolutely simple linear algebraic group $G$ over $k$.}  That is, $G$ is a smooth affine group scheme of finite type over $k$, and $G \times \kalg$ is simple, i.e., $G \times \kalg$ is connected, semisimple (= has trivial radical), is $\ne 1$, and its associated root system is irreducible. 

We write $h$ for the Coxeter number and $\hv$ for the dual Coxeter number of (the root system of) $G$; some examples are given in Table \ref{dual.coxeter} below. It is true that $\rank G < \hv \le h$, and the root system of $G$ is simply laced if and only if $\hv = h$.

We additionally assume until the start of the appendix that 
\emph{$\car k$ is zero or at least $h + 2$.}
Consequently: The integers 2, $\rank G$, $\hv$, $\hv + 1$ are not zero in $k$, so the same is true for $\dim G = (\rank G)(h + 1)$.  Examining type of $G$ in turn, we find: (1) The characteristic is ``very good'' for $G$.  
(2) The determinant of the Cartan matrix is not zero in $k$.  
(3) The ratio $\nu_G$ of the square-length of a long root to that of a short root (equivalently, the valence of the Dynkin diagram of $G$) is not zero in $k$.  

The discriminant of the Killing form $K$ on $\g$ can be expressed as a product of integers we have already observed are not zero in $k$ \cite[p.~E-14, I.4.8(a)]{SpSt}, and therefore $K$ is nondegenerate.  Finally, $\g$ is a simple Lie algebra that is an irreducible representation of $G$ \cite{Hiss}; it follows that, if $G'$ is isogenous to $G$, then $\g' \cong \g$.

\subsection*{Representations}
Suppose $G$ is split and put $\h$ for the Lie algebra of a split maximal torus $T$.  For a dominant weight $\la \in T^*$, we write $L(\la)$ for the irreducible representation of $G$ with highest weight $\la$.  The dimension and character of $L(\la)$ may depend on the characteristic of $k$ and not just on root system data.  However, there are representations $H^0(\la)$ and $V(\la)$ of $G$, both with highest weight $\la$, which equal $L(\la)$ when $\car k$ is zero or ``big enough'' (where what counts as big enough depends on $G$ and $\la$), and whose character is the same as the character of the irreducible representation over $\C$ with highest weight $\la$.  
The representations $V(\la)$ are called \emph{Weyl modules}; a basic example of such is the tautological representation of $\SO_n$.
See \cite{Jantzen} for background on these representations.
We use the fact that these representations are defined over $\Z$, see \cite[II.8.3]{Jantzen}.  See also \S\ref{rep.sec} for a discussion of the case where $G$ is not assumed to be split.

\subsection*{Casimir operator}
Put $\bil{\ }{\ }$ for the canonical bilinear form on the weight lattice of $G$, as defined in \cite[\S{VI.1.12}]{Bou:g4} or \cite[p.~115]{Dynk:ssub}; it is the unique nonzero and Weyl-group-invariant inner product satisfying $\bil{\lambda}{\lambda'} = \sum_\alpha \bil{\lambda}{\alpha} \bil{\lambda'}{\alpha}$, where $\alpha$ varies over the roots.  Then $\bil{\alpha}{\alpha} = 1/\hv$ for every long root $\alpha$, see \cite[p.~150]{Suter}. 

More generally, for each root $\alpha$, define $\nu_\alpha := 1$ if $\alpha$ is long and $\nu_\alpha := \nu_G$ if $\alpha$ is short.  By definition, then, $\bil{\alpha}{\alpha} = (\nu_\alpha \hv)^{-1}$ for every root $\alpha$, and this is not zero in $k$.

For the next two lemmas, we set $R := \Z_{(\car k)}$, the subring of $\Q$ whose nonzero elements are the fractions with denominator not divisible by $\car k$.  Note that $R$ is a local ring, $R/(\car k) \subseteq k$,  and $R = \Q$ if $\car k = 0$.

\begin{lem} \label{iprod.R}
For weights $\la, \la'$, the element $\bil{\la}{\la'}$ belongs to $R$.
\end{lem}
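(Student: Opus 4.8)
The plan is to exploit the very identity used to characterize the canonical form, namely $\bil{\la}{\la'} = \sum_\alpha \bil{\la}{\alpha}\bil{\la'}{\alpha}$ with $\alpha$ running over the (finitely many) roots. Since $R$ is a subring of $\Q$, it suffices to treat the special case in which the second argument is a root: once we know $\bil{\mu}{\alpha} \in R$ for every weight $\mu$ and every root $\alpha$, each summand $\bil{\la}{\alpha}\bil{\la'}{\alpha}$ lies in $R$, and hence so does the finite sum $\bil{\la}{\la'}$.

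To handle $\bil{\la}{\alpha}$, I would recall that, because $\bil{\ }{\ }$ is a Weyl-group-invariant inner product, the reflection $s_\alpha$ coincides with the orthogonal reflection in the hyperplane perpendicular to $\alpha$; comparing the two formulas for $s_\alpha(\la)$ gives the standard relation $\langle \la, \alpha^\vee\rangle = 2\,\bil{\la}{\alpha}/\bil{\alpha}{\alpha}$, where the left-hand side is the pairing of the weight $\la$ with the coroot $\alpha^\vee$ and is therefore an integer. Combined with $\bil{\alpha}{\alpha} = (\nu_\alpha\hv)^{-1}$ from the text, this yields
\[
\bil{\la}{\alpha} \;=\; \tfrac12\,\langle \la, \alpha^\vee\rangle\,\bil{\alpha}{\alpha} \;=\; \frac{\langle \la, \alpha^\vee\rangle}{2\,\nu_\alpha\hv}.
\]
The numerator is an integer, and each of the positive integers $2$, $\nu_\alpha \in \{1,\nu_G\}$, and $\hv$ is nonzero in $k$ by the global hypotheses --- equivalently, each is a unit of $R = \Z_{(\car k)}$ --- so the right-hand side lies in $R$, as needed.

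I do not expect a genuine obstacle here; the argument is essentially bookkeeping once one has $\bil{\alpha}{\alpha} = (\nu_\alpha\hv)^{-1}$ (recorded just above the lemma) and the fact that $2$, $\nu_G$, and $\hv$ are invertible in $R$. The only points deserving a sentence of care are (i) the normalization identity $\langle\la,\alpha^\vee\rangle = 2\,\bil{\la}{\alpha}/\bil{\alpha}{\alpha}$, which is immediate from Weyl-invariance, and (ii) the remark that here a ``weight'' is an element of the character lattice $T^*$, which is contained in the full weight lattice, so that its pairing with a coroot is indeed an integer.
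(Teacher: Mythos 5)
Your proof is correct, and the key integrality input is the same as in the paper's argument --- namely that $\langle \la, \alpha^\vee\rangle \in \Z$ while $\bil{\alpha}{\alpha}^{-1} = \nu_\alpha \hv$ and $2$ are units in $R$ under the global hypotheses --- but the reduction to that input is genuinely different. The paper first treats the case where $\la'$ is a root exactly as you do (in the form: $\bil{\la}{c\la'}$ is an integer for $c = 2/\bil{\la'}{\la'} = 2\nu_{\la'}\hv \in R^\times$), extends to the root lattice by bilinearity, and then handles an arbitrary weight $\la'$ by multiplying by the Coxeter number $h$, using that $h\la'$ lies in the root lattice and that $h \in R^\times$ because $\car k = 0$ or $\car k \ge h+2$. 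You instead invoke the normalizing identity $\bil{\la}{\la'} = \sum_\alpha \bil{\la}{\alpha}\bil{\la'}{\alpha}$ recorded just before the lemma, which reduces everything at once to the root case. Your route buys a small simplification: it sidesteps the (true but unproved-in-the-paper) fact that $h$ annihilates the quotient of the weight lattice by the root lattice, at the cost of relying on the characterizing sum formula for $\bil{\ }{\ }$ and on the orthogonal-reflection identity $\langle\la,\alpha^\vee\rangle = 2\bil{\la}{\alpha}/\bil{\alpha}{\alpha}$, both of which are standard and available in the text. Your two flagged points of care are exactly the right ones, and both check out.
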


\begin{proof}
It suffices to find a $c \in R^\times$ so that $\bil{\la}{c\la'}$ is in $R$.  If $\la'$ is a root, we take $c := 2/\bil{\la'}{\la'} = 2\nu_{\la'} \hv$.  Because $\la'$ is a root, $\bil{\la}{c\la'}$ is an integer, so in $R$.

If $\la'$ is in the root lattice, then the conclusion follows from the previous case by bilinearity.

For general $\la'$, we take $c := h$.  Since $h\la'$ is in the root lattice, $\bil{\la}{h\la'}$ is in $R$.
\end{proof}

We put $\sumroots$ for the sum of the positive roots.

\begin{lem} \label{qcas}
Suppose that the representation $\pi \!: G \to \GL(V)$ is equivalent to $H^0(\la)$ or $V(\la)$ over the algebraic closure of $k$ for some dominant weight $\la$.  Then:
\begin{enumerate}
\item  \label{qcas.end} For $\{ X_i \}$ a basis of $\g$ and $\{ Y_i \}$ the dual basis with respect to $K$, $\sum \pi(X_i) \, \pi(Y_i) = \bil{\la}{\la + \sumroots} \Id_V$ where $\sumroots$ is  the sum of the positive roots.
\item \label{qcas.tr} For all $x, y \in \g$ we have 
\[
\Tr (\pi(x)\, \pi(y)) = \frac{\bil{\la}{\la+\sumroots} \dim V}{\dim G} K(x,y).
\]
\end{enumerate}
\end{lem}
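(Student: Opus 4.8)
The plan is to recognize $\sum_i \pi(X_i)\,\pi(Y_i)$ as the image of the Casimir element of $U(\g)$ attached to $K$, evaluate that operator on a highest-weight vector, and then deduce part \eqref{qcas.tr} from part \eqref{qcas.end} by pairing dual bases. For part \eqref{qcas.end} I would start from the observation in Example \ref{es.eg} that $\eo = \sum_i X_i \ot Y_i$ is independent of the chosen basis, together with the classical fact that its image $c$ in $U(\g)$ is central (this is the Casimir element for $K$). Thus $\sum_i \pi(X_i)\,\pi(Y_i) = \pi(c)$ is an endomorphism of $V$ commuting with $\pi(\g)$, hence a $G$-module endomorphism since $G$ is connected. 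After extending scalars to $\kalg$, the module $V$ becomes $H^0(\la)$ or $V(\la)$; each has endomorphism algebra $\kalg$ because $L(\la)$ is its socle (resp.\ head) and occurs with multiplicity one in its composition series. Therefore $\pi(c)$ acts by a scalar, which can be read off on a vector $v_\la$ spanning the one-dimensional $\la$-weight space, this being a highest-weight vector.

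To compute $\pi(c)v_\la$, choose a basis of $\g\ot\kalg$ consisting of a basis $\{H_i\}$ of a Cartan subalgebra $\h$ and root vectors $E_\alpha$ normalized so that $K(E_\alpha, E_{-\alpha}) = 1$ (possible since $K$ restricts to a perfect pairing $\g_\alpha \times \g_{-\alpha} \to \kalg$ and $K|_\h$ is nondegenerate); then the $K$-dual basis is $\{H^i\} \cup \{E_{-\alpha}\}$ with $\{H^i\}$ the $K|_\h$-dual of $\{H_i\}$, and the relation $[E_\alpha, E_{-\alpha}] = t_\alpha$, where $t_\mu \in \h$ denotes the element with $K(t_\mu, \slot) = \mu$ for $\mu \in \h^*$, gives
\[
c = \sum_i H_i H^i + \sum_{\alpha > 0}\bigl( 2\, E_{-\alpha} E_\alpha + t_\alpha \bigr)
\]
in $U(\g)$. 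Applying $\pi(c)$ to $v_\la$ kills the terms $\pi(E_{-\alpha})\pi(E_\alpha)v_\la$ and leaves the scalar $\sum_i \la(H_i)\la(H^i) + \la(t_\sumroots)$ with $t_\sumroots = \sum_{\alpha>0} t_\alpha$. Transporting $K|_\h$ to the weight lattice by setting $(\mu,\nu) := K(t_\mu, t_\nu)$, the dual-basis identity $\sum_i K(v,H_i)K(w,H^i) = K(v,w)$ gives $\sum_i \la(H_i)\la(H^i) = (\la,\la)$ and, by additivity of $\mu\mapsto t_\mu$, $\la(t_\sumroots) = (\la,\sumroots)$, so the scalar equals $(\la, \la+\sumroots)$. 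Finally $(\ ,\ )$ is nonzero and Weyl-invariant, and the identity $K(H,H') = \sum_\alpha \alpha(H)\alpha(H')$ for $H,H'\in\h$ (the weights of $\ad$ on $\g$ are the roots) translates into $(\mu,\nu) = \sum_\alpha (\alpha,\mu)(\alpha,\nu)$; by the characterization of $\bil{\ }{\ }$ recalled in the text this forces $(\ ,\ ) = \bil{\ }{\ }$. Hence $\pi(c) = \bil{\la}{\la+\sumroots}\Id_V$ over $\kalg$, and since $\eo$ is defined over $k$ and $\bil{\la}{\la+\sumroots}\in R\subseteq k$ by Lemma \ref{iprod.R}, the identity descends to $k$.

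For part \eqref{qcas.tr}, consider the bilinear form $B_\pi(x,y) := \Tr(\pi(x)\,\pi(y))$ on $\g$; it is symmetric and, by cyclicity of the trace, $\g$-invariant. Because $\g\ot\kalg$ is an irreducible $G$-module, $\End_G(\g) = \kalg$, so the space of invariant bilinear forms on $\g$ is one-dimensional and spanned by $K$; hence $B_\pi = c\,K$ for some $c\in k$. Now pair the dual bases: on one side $\sum_i B_\pi(X_i,Y_i) = \Tr\bigl(\sum_i \pi(X_i)\pi(Y_i)\bigr) = \bil{\la}{\la+\sumroots}\dim V$ by part \eqref{qcas.end}, and on the other $\sum_i c\,K(X_i,Y_i) = c\dim\g = c\dim G$. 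Since $\dim G$ is invertible in $k$, this gives $c = \bil{\la}{\la+\sumroots}\dim V/\dim G$, which is the asserted formula.

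The step I expect to be the main obstacle is pinning down the normalization in part \eqref{qcas.end}: arranging the root vectors so that the dual basis is transparent, correctly carrying the $2\,E_{-\alpha}E_\alpha + t_\alpha$ contribution of $[E_\alpha,E_{-\alpha}]$, and above all verifying that the transported form $(\ ,\ )$ equals $\bil{\ }{\ }$ on the nose — i.e.\ that the proportionality constant is exactly $1$ — rather than some other rational multiple. Once this is settled, the reduction to general $k$ and the passage to part \eqref{qcas.tr} are routine.
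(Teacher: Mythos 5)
Your argument for part \eqref{qcas.tr} is clean and valid in all characteristics allowed by the global hypotheses, and your treatment of part \eqref{qcas.end} is a genuinely more self-contained route than the paper's: the paper simply cites the standard theory of the quadratic Casimir (Bourbaki, Dynkin) for the characteristic-zero case and then transfers the identity to characteristic $p$ by viewing it as a polynomial identity over the integral model $G_R$, $\pi_R$ over $R=\Z_{(\car k)}$. What your version buys is an explicit derivation of the eigenvalue $\bil{\la}{\la+\sumroots}$, including the normalization check you correctly flag as the delicate point; what it loses is robustness in positive characteristic, and that is where there are two genuine gaps.

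First, the inference ``$\pi(c)$ commutes with $\pi(\g)$, hence is a $G$-module endomorphism since $G$ is connected'' is false over a field of characteristic $p$: a $\g$-endomorphism of a $G$-module need not be $G$-equivariant (the Lie algebra does not control the group there). This is fixable without changing your strategy --- $\eo$ is a $G$-invariant element of $\g\ot\g$ and $X\ot Y\mapsto \pi(X)\pi(Y)$ is $G$-equivariant, so $\pi(c)$ lands in $\End_G(V)$ directly, and your socle/head argument then applies. Second, and more seriously, your identification of the transported form $(\mu,\nu):=K(t_\mu,t_\nu)$ with $\bil{\mu}{\nu}$ rests on the uniqueness of the nonzero Weyl-invariant form satisfying $\bil{\la}{\la'}=\sum_\alpha\bil{\la}{\alpha}\bil{\la'}{\alpha}$. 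That characterization is a statement about $\Q$- or $\R$-valued forms; when $\car k=p$ your form $(\ ,\ )$ takes values in $k$, the space of Weyl-invariant $k$-valued forms need not be one-dimensional, and the functional equation does not pin down a $k$-valued form. To close this you must argue integrally: the Gram matrix of $K\vert_\h$ on a Chevalley basis has integer entries with determinant a unit in $R$ (this is exactly why $K$ is nondegenerate under the global hypotheses), so its inverse has entries in $R$ and reduces mod $p$ to the reduction of $\bil{\ }{\ }$. That is precisely the reduction-mod-$p$ step the paper makes explicit via $G_R$; your closing sentence ``the identity descends to $k$'' conflates the harmless descent from $\kalg$ to $k$ with this reduction from characteristic $0$ to characteristic $p$, which is where the actual content lies.
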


In the statement, we have abused notation by writing $\pi$ also for the differential $\g \to \gl(V)$ of $\pi$.

\begin{proof}[Sketch of proof]
In case $k$ is algebraically closed of characteristic zero, this result is about an irreducible representation and the claims are part of the 
usual theory of the quadratic Casimir operator $\sum X_i Y_i \in U(\g)$ as in, for example, \cite[\S{VIII.6.4}, Cor.]{Bou:g7} or \cite[Th.~2.5]{Dynk:ssub}. 

In case $\car k = 0$, it suffices to verify the claims over an extension field, for which we take the algebraic closure of $k$.

Now suppose that $\car k$ is a prime $p$ and $G$ is split.  There is a split group $G_R$  and representation $\pi_R$, both defined over $R$, whose base change to $k$ is equivalent to $G$, $\pi$.  As $\bil{\la}{\la + \sumroots}$ and $(\dim G)^{-1}$ are in $R$, the claims amount to certain polynomials over $R$ being zero. Those polynomials are zero over the field of fractions $\Q$ of $R$, so they are also zero over the quotient field $\F_p$ and therefore over $k$.

Finally, if $\car k$ is prime, again it suffices to verify the claims over the algebraic closure of $k$, where $G$ is split.
\end{proof}

%%%%%%%%%%%%%%%%%%%%%%%%%%%%%%%%%%%%%%%%%%%%%%%%%%%%%%%%%
\section{The representation $A(\g)$} \label{Ag.sec}

Define a map 
$\g \otimes \g \to \End(\g)$ via $X \ot Y \mapsto \hv (\ad X)(\ad Y) + XK(Y, \slot)$.  It is bilinear, so provides a $G$-equivariant linear map $\g \ot \g \to \End(\g)$.  Composing this with \eqref{sym.emb}, we find a $G$-equivariant linear map $S \!: \Sym^2(\g) \to \End(\g)$ such that
\begin{equation} \label{S.def}
S(XY) := \hv \ad(X) \jord \ad(Y) + P(XY),
\end{equation}
where $P$ is as in Definition \ref{P.def} and $\jord$ denotes the Jordan product  \eqref{jord.def}.

Since $(\ad X)^\top = -\ad X$ for all $X \in \g$, we find that $S(XY)$ belongs to $\Herm(\g)$. 
Since $S$ is linear in $X$ and in $Y$ and symmetric in the two terms, it extends linearly to all of $\Sym^2(\g)$.    We set:
\begin{equation} \label{Ag.def}
{A(\g) := \im S \quad \subseteq \Herm(\g).}
\end{equation}

\begin{eg} \label{tr.Sx}
For $X \in \g$ we have
\[
\Tr(S(X^2)) = \hv K(X,X) + \Tr (X K(X, \slot)) = (\hv + 1)K(X,X).
\]
Linearizing this shows that $\Tr(S(XY)) = (\hv + 1)K(X,Y)$ for $X, Y \in \g$.
\end{eg}

\begin{eg} \label{S.es}
For $S(\es)$, we have $P(\es) = \Id_\g$ as in Example \ref{es.eg} and $\sum (\ad X_i)(\ad Y_i) = \Id_\g$ as in Lemma \ref{qcas}\eqref{qcas.end}.  Therefore, $S(\es) = (\hv + 1) \Id_\g$. 
\end{eg}

\subsection*{The split case}
Suppose that $G$ is split, i.e., contains a split maximal torus $T$ defined over $k$.  (This is automatic if $k$ is algebraically closed.)  Fix a Chevalley basis of $\g$ with respect to $\h := \Lie(T)$ in the sense of \cite{St}, \cite{SpSt}, or \cite[\S{XX.2.11}]{SGA3.3:new}.  That is, for each root $\alpha$, define elements $H_\alpha \in \h$ and $X_\alpha \in \g$ so that $X_\alpha$ spans the $\alpha$ weight space (for the action of $T$ on $\g$), $\g = \h \oplus \bigoplus_\alpha kX_\alpha$,
\[
[H_\beta, X_\alpha ]  = \beta^\vee(\alpha) X_\alpha\quad \text{and} \quad [X_\alpha, X_{-\alpha}] = H_\alpha.
\]
(This last equation differs by a sign from the one used in \cite[\S{VIII.2.2}]{Bou:g7}.)
 We note that for any root $\alpha$, 
 \begin{equation} \label{K.spst}
 K(X_\alpha, X_{-\alpha}) = K(H_\alpha, H_\alpha)/2 = 2 \nu_\alpha \hv
 \end{equation}
by the formulas in  \cite[pp.~E-14, E-15]{SpSt}.

\begin{lem} \label{high.wt}
Maintain the notation of the preceding paragraph.  Suppose that $\alpha$ and $\beta$ are roots of $G$ such that $\alpha + \beta$ is not a root.
\begin{enumerate}
\item \label{high.0} If $\bil{\alpha}{\beta} = 0$, then $S(X_\alpha X_\beta) \ne 0$ in $A(\g)$.
\item \label{high.plus} Suppose $\bil{\alpha}{\beta} > 0$.  Then $S(X_\alpha X_\beta) \ne 0$ in $A(\g)$ if and only if there are two root lengths and $\alpha$ and $\beta$ are both short.
\end{enumerate}
\end{lem}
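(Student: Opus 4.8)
The plan is to exploit that $S(X_\alpha X_\beta)$ is a very ``thin'' operator: since $\alpha+\beta$ is not a root we have $[X_\alpha,X_\beta]=0$, so $P(X_\alpha X_\beta)$ kills $\h$ and every root vector except $X_{-\alpha}$ and $X_{-\beta}$, and a weight count then shows $S(X_\alpha X_\beta)$ kills $\h$ and carries each $X_\delta$ into $kX_{\alpha+\beta+\delta}$ (which is $0$ unless $\alpha+\beta+\delta$ is a root). Hence $S(X_\alpha X_\beta)=0$ if and only if $S(X_\alpha X_\beta)(X_\delta)=0$ for every root $\delta$ with $\alpha+\beta+\delta$ a root, and I will simply test on suitable $X_\delta$. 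For $\delta\ne-\alpha,-\beta$ the $P$-term drops out and the Jacobi identity together with $[X_\alpha,X_\beta]=0$ collapses the rest to $\hv[X_\alpha,[X_\beta,X_\delta]]=\hv[X_\beta,[X_\alpha,X_\delta]]$, which is nonzero only when $\alpha+\delta$, $\beta+\delta$ and $\alpha+\beta+\delta$ are all roots.

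For \eqref{high.0}: $\bil{\alpha}{\beta}=0$ gives $\beta^\vee(\alpha)=0$, so the $\alpha$-string through $\beta$ is $\{\beta\}$; thus $\beta-\alpha$ is not a root and $[X_\beta,H_\alpha]=0$, and (as $\alpha\ne\pm\beta$) applying $S(X_\alpha X_\beta)$ to $X_{-\alpha}$ leaves only $\tfrac12K(X_\alpha,X_{-\alpha})X_\beta=\nu_\alpha\hv\,X_\beta\ne0$ by \eqref{K.spst}. For \eqref{high.plus} I split on whether $\alpha=\beta$. If $\alpha=\beta$, then from $\alpha^\vee(\alpha)=2$ and \eqref{K.spst},
\[
S(X_\alpha^2)(X_{-\alpha})=\hv[X_\alpha,H_\alpha]+K(X_\alpha,X_{-\alpha})X_\alpha=2\hv(\nu_\alpha-1)X_\alpha,
\]
which is nonzero exactly when $\alpha$ is short (using that $\nu_G-1\ne0$ in $k$); and when $\alpha$ is long or $G$ is simply laced, a quadratic-length count (a string with long step has at most $2$ roots) shows $S(X_\alpha^2)(X_\delta)=0$ for all other $\delta$ too, so $S(X_\alpha^2)=0$. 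This matches the claimed dichotomy for $\alpha=\beta$.

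The substance is the case $\alpha\ne\beta$, $\bil{\alpha}{\beta}>0$, $\alpha+\beta$ not a root; then $\gamma:=\alpha-\beta$ is a root. For the ``only if'' direction I may assume, by the $\alpha\leftrightarrow\beta$ symmetry of $S(X_\alpha X_\beta)$, that $\alpha$ is long. There is then no root $\delta\notin\{-\alpha,-\beta\}$ with $\alpha+\delta$, $\beta+\delta$, $\alpha+\beta+\delta$ all roots: a string with long step has at most $2$ roots, so $\alpha+\delta$ a root forces $\alpha^\vee(\delta)=-1$ and $(\delta+\beta)+\alpha$ a root forces $\alpha^\vee(\delta+\beta)=-1$ (both $\delta$ and $\delta+\beta$ differ from $\pm\alpha$, since the contrary would contradict our hypotheses on $\delta$ or on $\alpha+\beta$), and subtracting yields $\alpha^\vee(\beta)=0$, against $\bil{\alpha}{\beta}>0$. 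So $S(X_\alpha X_\beta)(X_\delta)=0$ for all $\delta\ne-\alpha,-\beta$, and it remains to check $S(X_\alpha X_\beta)(X_{-\beta})=0$ — which forces $S(X_\alpha X_\beta)(X_{-\alpha})=0$ by the $K$-symmetry of $S(X_\alpha X_\beta)$. Writing $S(X_\alpha X_\beta)(X_{-\beta})=cX_\alpha$ and computing $cK(X_\alpha,X_{-\alpha})$ from \eqref{S.def} gives
\[
cK(X_\alpha,X_{-\alpha})=\nu_\alpha(\hv)^2\bigl(N_{\alpha,-\beta}N_{\beta,\gamma}-\beta^\vee(\alpha)+2\nu_\beta\bigr),
\]
$N_{\cdot,\cdot}$ being the Chevalley structure constants. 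One has $|N_{\alpha,-\beta}|=1$ (as $-\alpha-\beta$ is not a root) and $|N_{\beta,\gamma}|=\beta^\vee(\alpha)$ (from the $\beta$-string through $\gamma$, whose top is $\gamma+\beta=\alpha$, so whose length is $\beta^\vee(\alpha)+1$); the standard relation for the zero-sum triple $\beta+\gamma+(-\alpha)=0$, namely $N_{\beta,\gamma}\bil{\gamma}{\gamma}=-N_{\alpha,-\beta}\bil{\alpha}{\alpha}$, combined with $\bil{\gamma}{\gamma}/\bil{\alpha}{\alpha}=1/\beta^\vee(\alpha)$ (valid as $\alpha$ is long), gives $N_{\alpha,-\beta}N_{\beta,\gamma}=-\beta^\vee(\alpha)$; and $\alpha$ long also makes $\beta^\vee(\alpha)=\bil{\alpha}{\alpha}/\bil{\beta}{\beta}=\nu_\beta$, so the scalar is $2\nu_\beta-2\beta^\vee(\alpha)=0$. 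Re-running the same computation when $\alpha$ and $\beta$ are both short (so $\beta^\vee(\alpha)=\alpha^\vee(\beta)=1$, $|N_{\beta,\gamma}|=1$, $\bil{\gamma}{\gamma}=\bil{\alpha}{\alpha}$, $N_{\alpha,-\beta}N_{\beta,\gamma}=-1$) gives $cK(X_\alpha,X_{-\alpha})=\nu_\alpha(\hv)^2\cdot2(\nu_G-1)\ne0$, so $S(X_\alpha X_\beta)\ne0$, completing the ``if'' direction.

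I expect the main obstacle to be precisely this last computation: pinning down the sign and magnitude of $N_{\alpha,-\beta}N_{\beta,\gamma}$ and of the ratio $\bil{\gamma}{\gamma}/\bil{\alpha}{\alpha}$ uniformly over the three configurations of $\{\alpha,\beta\}$ that arise (both long, angle $60^\circ$; one long and one short, angle $45^\circ$; $\alpha$ long and $\beta$ short, angle $30^\circ$ in $G_2$), and confirming that the scalar $N_{\alpha,-\beta}N_{\beta,\gamma}-\beta^\vee(\alpha)+2\nu_\beta$ vanishes in each. The reduction to root vectors, the vanishing on $\h$, and the length argument excluding the ``square'' are all soft; the structure-constant bookkeeping must be done with care, in particular relative to the (non-Bourbaki) sign convention for the Chevalley basis that the paper has fixed.
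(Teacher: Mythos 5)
Your proof is correct, and its skeleton is the paper's: both arguments observe that $S(X_\alpha X_\beta)$ has weight $\alpha+\beta$ and so is determined by its values on root vectors, both extract the basic dichotomy from the evaluation at $X_{-\alpha}$ or $X_{-\beta}$ (your scalar $N_{\alpha,-\beta}N_{\beta,\gamma}-\beta^\vee(\alpha)+2\nu_\beta$ is, after simplification, the mirror image of the paper's coefficient $(\nu_\alpha-\alpha^\vee(\beta))\hv$ in \eqref{high.key}), and both must then kill the contributions from the remaining root vectors when some root is long. Where you genuinely diverge is in that last step: the paper conjugates the long root to the highest root $\hst$ and argues from the positivity properties special to $\hst$, whereas you use only that an $\alpha$-string has at most two elements when $\alpha$ is long, so that $\alpha+\delta$ and $\alpha+(\beta+\delta)$ both being roots forces $\alpha^\vee(\delta)=\alpha^\vee(\delta+\beta)=-1$ and hence $\alpha^\vee(\beta)=0$, contradicting $\bil{\alpha}{\beta}>0$. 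Your version is shorter and avoids the Weyl-group reduction entirely; this is a real (if modest) improvement. On the other hand, the structure-constant bookkeeping you single out as the main obstacle is a detour you do not need: since $\alpha+\beta$ is not a root, $\ad X_\alpha$ and $\ad X_\beta$ commute (a fact you already exploit for the other root vectors), so $N_{\alpha,-\beta}N_{\beta,\gamma}X_\alpha=[X_\beta,[X_\alpha,X_{-\beta}]]=[X_\alpha,[X_\beta,X_{-\beta}]]=[X_\alpha,H_\beta]=-\beta^\vee(\alpha)X_\alpha$, and your scalar collapses to $2(\nu_\beta-\beta^\vee(\alpha))$ with no signs or string lengths to track and no dependence on the Chevalley-basis convention. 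This is precisely how the paper obtains \eqref{high.key}, and it disposes of all three angle configurations at once.
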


\begin{proof}
Since $X_\alpha$, $X_\beta$ commute in $\g$, so do $\ad X_\alpha$, $\ad X_\beta$ in $\End(\g)$.  Therefore,
\begin{equation} \label{high.key}
S(X_\alpha X_\beta) X_{-\alpha} = (\nu_\alpha - \alpha^\vee(\beta)) \hv X_\beta + \frac12 K(X_\beta, X_{-\alpha}) X_\alpha.
\end{equation}

If $\bil{\alpha}{\beta} = 0$, then the only nonzero term on the right side of \eqref{high.key} is $\nu_\alpha \hv X_\beta \ne 0$, verifying \eqref{high.0}.

We now prove:
\begin{enumerate}[label=(\arabic*$'$)]
\setcounter{enumi}{1}
\item \label{high.plus2} \emph{Suppose $\bil{\alpha}{\beta} > 0$.  Then $S(X_\alpha X_\beta) X_{-\alpha} \ne 0$ in $\g$ if and only if $\alpha$ and $\beta$ are both not long.}
\end{enumerate}
If $\alpha = \beta$, then $\alpha^\vee(\beta) = 2$ and \eqref{high.key}  equals $2(\nu_\alpha - 1)\hv X_\beta$.  This is nonzero if and only if $\alpha$ is not long, verifying \ref{high.plus2} in this case.

If $\alpha \ne \beta$, then \eqref{high.key} equals $ (\nu_\alpha - \alpha^\vee(\beta))\hv X_\beta$.   If $\alpha$ is not long, then either (a) $\beta$ is long, $\nu_\alpha = \alpha^\vee(\beta)$, and \eqref{high.key} is  zero or (b) $\beta$ is also not long, $\alpha^\vee(\beta) = 1$, and \eqref{high.key} is not zero.  If $\alpha$ is long, then $\nu_\alpha = \alpha^\vee(\beta) = 1$, see for example \cite[\S{VI.1.3}]{Bou:g4}.  This completes the verification of \ref{high.plus2}.

To complete the proof of the lemma, we assume that $\bil{\alpha}{\beta} > 0$ and at least one of $\alpha$, $\beta$ is long, and verify that $S(X_\alpha X_\beta) = 0$.  Because $S(X_\alpha X_\beta) H = 0$ for all $H \in \h$, it remains to evaluate 
\begin{equation} \label{high.g}
S(X_\alpha X_\beta) X_{-\gamma} = \hv [X_\beta, [X_\alpha, X_{-\gamma}]]\quad \text{for $\gamma \ne \alpha,\beta$.}
\end{equation}
By symmetry, we may assume that $\alpha$ is long, so in the Weyl orbit of the highest root $\hst$, and we may even assume that $\alpha = \hst$.  If any of $\hst - \gamma$, $\beta - \gamma$, or $\hst+\beta-\gamma$ is not a root, then \eqref{high.g} is zero, as claimed.  

For sake of contradiction, suppose that all three are roots.  This implies $\beta \ne \hst$, for otherwise $\hst + \beta - \gamma = 2\hst - \gamma$ is a root, whence $\gamma = \hst$, a contradiction.   Since $\gamma$ and $\hst - \gamma$ are roots, $\gamma$ is positive. 

Note that if $\rho$ is any root orthogonal to $\hst$, then since at least one of $\hst \pm \rho$ is not a root, neither can be.  Consequently, $\bil{\hst}{\gamma} \ne 0$.  It follows that $\bil{\hst}{\gamma} > 0$, since $\hst + \gamma$ is not a root and $\hst \ne -\gamma$.  Now $\hst$ is long and $\bil{\hst}{\beta}$, $\bil{\hst}{\gamma}$ are positive, so $\hst^\vee(\beta) = \hst^\vee(\gamma) = 1$, whence $\bil{\hst}{\beta - \gamma} = 0$, contradicting the hypothesis that $\hst + \beta - \gamma$ is a root.
\end{proof}

\begin{cor} \label{high.2hst}
$2\hst$ is not a weight of $A(\g)$.
\end{cor}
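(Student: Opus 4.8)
The plan is to reduce the claim to the single vanishing $S(X_\hst^2)=0$ and then read that off Lemma~\ref{high.wt}. Since $\hst$ is the highest weight of the adjoint module $\g$, every weight $\mu$ of $\g$ has $\hst-\mu$ equal to a nonnegative integer combination of simple roots; applying this to both $\mu$ and $\nu$ in an equation $\mu+\nu=2\hst$ with $\mu,\nu$ weights of $\g$ and adding forces $\mu=\nu=\hst$. Hence the $2\hst$ weight space of $\Sym^2\g$ is the line $k\,X_\hst^2$. After extending scalars to $\kalg$ (harmless, as $S$, and therefore $A(\g)$, is defined over $k$), the map $S$ is $T$-equivariant, so $A(\g)_{2\hst}=(\im S)_{2\hst}=S\bigl((\Sym^2\g)_{2\hst}\bigr)=k\,S(X_\hst^2)$. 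Thus $2\hst$ is a weight of $A(\g)$ if and only if $S(X_\hst^2)\ne 0$.

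To finish I would invoke Lemma~\ref{high.wt}\eqref{high.plus} with $\alpha=\beta=\hst$. This is a permissible choice: $\hst$ is a root, $2\hst$ is not a root because the root system is reduced, and $\bil{\hst}{\hst}>0$. The highest root $\hst$ is long, so the criterion in \eqref{high.plus} (two root lengths, both $\alpha$ and $\beta$ short) is not met, and therefore $S(X_\hst X_\hst)=0$, as needed.

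The real content sits inside Lemma~\ref{high.wt}, and the computation can also be done directly: a $T$-weight-$2\hst$ endomorphism of $\g$ is supported on $\g_{-\hst}=k\,X_{-\hst}$ (by the same dominance bound, $-\hst$ is the only weight $\mu$ of $\g$ with $\mu+2\hst$ again a weight), so such an endomorphism is determined by its value on $X_{-\hst}$. By \eqref{S.def}, $S(X_\hst^2)=\hv\,(\ad X_\hst)^2+P(X_\hst^2)$ (the Jordan square of $\ad X_\hst$ is its ordinary square), and this sends $X_{-\hst}$ to $\hv\,[X_\hst,[X_\hst,X_{-\hst}]]+K(X_\hst,X_{-\hst})\,X_\hst=\hv\,[X_\hst,H_\hst]+2\hv X_\hst=-2\hv X_\hst+2\hv X_\hst=0$, using $[X_\hst,X_{-\hst}]=H_\hst$, the relation $[H_\hst,X_\hst]=\hst^\vee(\hst)X_\hst=2X_\hst$, and \eqref{K.spst}. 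I do not anticipate a genuine obstacle here; the only points needing care are that $\alpha=\beta$ is allowed in Lemma~\ref{high.wt} (its proof treats that case explicitly) and that $\hst$ is long while $2\hst$ is not a root.
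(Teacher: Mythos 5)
Your proposal is correct and takes exactly the same route as the paper, whose proof is the one-line observation that the $2\hst$ weight space of $\Sym^2\g$ is spanned by $X_\hst^2$ and that $S(X_\hst^2)=0$ by Lemma~\ref{high.wt}\eqref{high.plus}. The extra details you supply --- the dominance argument showing the weight space is one-dimensional, the check that $\alpha=\beta=\hst$ is a legitimate input to the lemma, and the direct computation of $S(X_\hst^2)X_{-\hst}$ --- are all accurate and merely make explicit what the paper leaves implicit.
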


\begin{proof}
The $2\hst$ weight space in $\Sym^2 \g$ is spanned by $X_\hst^2$, yet $S(X_\hst^2) = 0$ by Lemma \ref{high.wt}\eqref{high.plus}.
\end{proof}

%%%%%%%%%%%%%%%%%%%%%%%%%%%%%%%%%%%%%%%%%%%%%%%%%%%%%%%%%
\section{The commutative algebra $A(\g)$}

Recall the vector space $A(\g)$ defined in \eqref{Ag.def}.
Define, for $A, B, C, D \in \g$:
\begin{align}
S(AB)\am S(CD) = &\ \frac{\hv}2 \left( S(A, (\ad C \jord \ad D) B) + S((\ad C \jord \ad D) A, B) \right) \notag \\
&+\frac{\hv}2 \left( S( C, (\ad A \jord \ad B) D) + S((\ad A \jord \ad B) C, D) \right) \notag \\
&+ \frac{\hv}2 \left( S([A,C],[B,D]) + S([A,D],[B,C]) \right) \label{prod.full} \\
&+ \frac14 \left( K(A,C)S(B,D)+ K(A,D)S(B,C) \right) \notag \\
&+ \frac14 \left( K(B,C)S(A,D) + K(B,D)S(A,C) \right)\notag
\end{align}
in $A(\g)$, where on the right side we have added extra commas in the arguments for the $S$ terms (e.g., writing $S(X,Y)$ instead of $S(XY)$) for clarity.
%When some of the arguments are equal, it takes the simpler form:
%\begin{align}
%S(XX) \am S(YY) = &\ \hv \left( S((\ad X)^2 Y \ot Y)  + S((\ad Y)^2 X \ot X) \right) \notag \\
%&+ \hv S([X,Y], [X,Y]) + K(X,Y)S(X,Y). \label{prod.quad}
%\end{align}

\begin{lem} \label{am.def}
The formula \eqref{prod.full} extends to a symmetric bilinear map $\am \!: A(\g) \times A(\g) \to A(\g)$.
\end{lem}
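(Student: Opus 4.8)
The plan is to show that the right-hand side of \eqref{prod.full}, viewed a priori as a function of the four vectors $A, B, C, D \in \g$, descends to a well-defined bilinear map on $A(\g) \times A(\g)$. There are two separate things to check: first, that the expression is symmetric and multilinear in the ``right'' way so that it factors through $\Sym^2 \g \times \Sym^2 \g$; and second, and this is the real content, that it only depends on the images $S(AB)$ and $S(CD)$ rather than on the tensors $AB, CD \in \Sym^2 \g$ themselves. The symmetry statement is then almost immediate: inspecting \eqref{prod.full} term by term, each of the five grouped lines is manifestly symmetric under swapping the pair $(A,B)$ with the pair $(C,D)$, and each is symmetric (or is visibly a symmetrization) under swapping $A \leftrightarrow B$ and under swapping $C \leftrightarrow D$, using that $\ad C \jord \ad D$ is symmetric in $C,D$ and that $S$ is symmetric in its two arguments. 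Hence the formula defines a symmetric bilinear map $\Sym^2 \g \times \Sym^2 \g \to \End(\g)$, and one should also record that its values lie in $A(\g) = \im S$, which is clear since every term on the right is literally of the form $S(-,-)$.

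The crux is well-definedness: if $\omega \in \Sym^2 \g$ lies in $\ker S$, I must show that plugging $\omega$ into either slot of the bilinear form on $\Sym^2 \g$ produces $0$. The natural route is to exhibit \eqref{prod.full} as $P(\text{something intrinsic})$ or, better, to recognize the right-hand side as an ``obvious'' operation on $\Herm(\g)$ applied to $S(AB)$ and $S(CD)$ — most plausibly as a combination of the Jordan product $\jord$ on $\Herm(\g)$ (as in Example \ref{jord.eg}) together with correction terms built from $\ad$. Concretely, I expect that expanding $S(AB) \jord S(CD)$ using \eqref{S.def} — i.e., $S(AB) = \hv\,\ad A \jord \ad B + P(AB)$ — produces a sum of a $P \jord P$ piece (computed by Example \ref{jord.eg}), mixed $\ad$-$P$ pieces, and a $(\ad A \jord \ad B) \jord (\ad C \jord \ad D)$ piece. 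The mixed and pure-$\ad$ terms will not by themselves be expressible through $S$, but after adding back suitable multiples of $\ad$-commutator identities (Jacobi identity rewrites such as $[\ad X, \ad Y] = \ad[X,Y]$, and $K$-invariance $K([X,Y],Z) = K(X,[Y,Z])$) they should collapse precisely into the five lines of \eqref{prod.full}. Since $\jord$ is an intrinsic operation on $\End(\g)$ and $\ad$ is determined by $S$ only up to $\ker S$, I will instead argue the other way: express \eqref{prod.full} as a manifestly intrinsic function of the operators $S(AB)$, $S(CD)$ — for instance as $4\,S(AB)\jord S(CD)$ minus explicit intrinsic correction operators — so that it patently depends only on these operators and hence only on the classes in $A(\g)$.

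The key steps, in order, would be: (1) record symmetry and the containment in $A(\g)$ by inspection of \eqref{prod.full}; (2) using \eqref{S.def}, Example \ref{jord.eg}, and the identity $\ad[X,Y] = [\ad X, \ad Y]$ together with $K$-invariance, expand $S(AB) \jord S(CD)$ and more generally all the ``obvious'' intrinsic quadratic expressions in $S(AB), S(CD)$ built from $\jord$, $\ad$, and $P$; (3) solve for a linear combination of these intrinsic expressions that reproduces the right-hand side of \eqref{prod.full} exactly; (4) conclude that \eqref{prod.full} depends only on $S(AB)$ and $S(CD)$, hence descends to a well-defined symmetric bilinear map $\am$ on $A(\g) \times A(\g)$ with values in $A(\g)$. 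The main obstacle is step (3): verifying the precise numerical cancellations that turn the $\ad$-heavy expansion into the symmetrized $S$-terms of \eqref{prod.full} requires careful bookkeeping with the $\hv$ factors and with the Jacobi/invariance rewrites, and it is here that the specific coefficient $\hv$ in \eqref{S.def} (as opposed to an arbitrary scalar) is presumably forced. A cleaner alternative, if available, is to avoid the explicit identification and instead show directly that for $\omega \in \ker S$ the pairing $\omega \am S(CD) = 0$, by testing against the nondegenerate trace form on $\Herm(\g)$ and using that $\ker S$ is a $G$-submodule together with the list of irreducible constituents of $\Sym^2\g$; but I would expect the direct computational route above to be what the authors actually carry out.
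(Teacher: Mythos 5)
Your reduction of the problem is right: symmetry and multilinearity are immediate from inspection, the values visibly lie in $\im S$, and the whole content is showing that the expression vanishes when one argument is replaced by an element of $\ker S$. But your main route for that step --- finding a linear combination of ``intrinsic'' quadratic expressions in the operators $S(AB)$, $S(CD)$ (built from $\jord$, $P$, $\ad$) that reproduces \eqref{prod.full} --- is left as an unexecuted program, and you yourself flag step (3) as the obstacle. There is concrete reason to doubt it works in the form you propose: expanding $S(AB)\jord S(CD)$ via \eqref{S.def} produces the quartic term $(\hv)^2(\ad A\jord \ad B)\jord(\ad C\jord \ad D)$, which does not lie in $\im S$ and has no evident intrinsic correction; moreover Remark \ref{DMVC.rmk} records that the genuinely intrinsic product $\Proj_A(a\jord a')$ is a \emph{different} multiplication from $\am$, so $\am$ is not simply a projected Jordan product. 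Your fallback suggestion (test against the trace form and use the list of irreducible constituents of $\Sym^2\g$) would lean on the decomposition established only in Proposition \ref{decomp.lem}, which comes later and under stronger hypotheses on $\car k$, so it risks circularity and a loss of generality.

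The paper's actual argument is more elementary and entirely direct: after extending scalars, it takes $\sum X_i^2\in\ker S$, writes out $S(Y^2)\am\sum S(X_i^2)$ as the four terms in \eqref{wd.sum}, and kills them in pairs. The hypothesis $S(\sum X_i^2)=0$ means exactly $\sum P(X_i^2)=-\hv\sum(\ad X_i)^2$, which cancels the second term against the fourth; the first and third terms combine, via the equivariance identity $[\ad Z, S(AB)]=S([Z,A]B)+S(A[Z,B])$ applied twice, into $\frac{\hv}{2}[\ad Y,[\ad Y, S(\sum X_i^2)]]=0$. No intrinsic reformulation of \eqref{prod.full} is needed or produced. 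To repair your proof, replace steps (2)--(4) with this direct kernel computation; as written, the crucial well-definedness claim is not established.
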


\begin{proof}
Since both sides of \eqref{prod.full} are linear in each of $A$, $B$, $C$, $D$ and symmetric under swapping $A$, $B$ and $C$, $D$, it remains only to check that $\am$ is well defined, i.e., that the expression given for $S(AB) \am S(v)$ is zero for all $v \in \ker S$.  It is sufficient to check this over an algebraic closure of $k$, where we are reduced to the following computation.

Let $Y, X_1, \ldots, X_r \in \g$ be such that $S(\sum X_i^2) = 0$.  The expression for $S(Y^2) \am \sum S(X_i^2)$ is 
\begin{multline}  \label{wd.sum}
\hv \sum S(((\ad Y)^2 X_i) X_i) + \hv \sum S(((\ad X_i)^2 Y) Y)  \\
+ \hv \sum S([Y,X_i][Y,X_i]) + \sum K(Y, X_i) S(X_iY).
 \end{multline}
As $\sum S(X_i^2) = 0$, $\sum P(X_i^2) = -\hv\sum (\ad X_i)^2$, so the second and fourth terms in \eqref{wd.sum} cancel.
 
 Furthermore, as $S$ is $\g$-equivariant, we have 
 \begin{equation} \label{A.eq}
 [\ad Z, S(AB)] = S([Z,A]B) + S(A[Z,B]) \quad \text{for $A, B, Z \in \g$.}
 \end{equation}
 Adding the first and third term in \eqref{wd.sum}, dividing by $\hv$, and applying this identity twice gives
\[
\left[ \ad Y, \sum S([Y,X_i]X_i)\right] = \frac12 [\ad Y, [\ad Y, \sum S(X_i^2)]] = 0.
\]
In summary, \eqref{wd.sum} is zero.   Therefore, if we write 
$a, a' \in A(\g)$ as $a = S(w)$ and $a' = S(w')$ for $w, w' \in \Sym^2\g$, the value of $a \am a'$ given by \eqref{prod.full} does not depend on the choice of $w, w'$.
\end{proof}

With Lemma \ref{am.def} in hand, we view $A(\g)$ as a commutative $k$-algebra with the product $\am$ defined by \eqref{prod.full}.  

\begin{lem} \label{id.lem}
The identity transformation $e$ of $\g$ is the multiplicative identity in $A(\g)$, i.e., $e \am a = a$ for all $a \in A(\g)$.
\end{lem}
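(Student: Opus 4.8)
The plan is to use the identity $e = (\hv+1)^{-1}S(\es)$ from Example~\ref{S.es}, where $\es = \sum_i X_i Y_i$ for $\{X_i\}$ a basis of $\g$ and $\{Y_i\}$ its $K$-dual basis. Since $A(\g)=\im S$ is spanned by the elements $S(CD)$ with $C,D\in\g$, and $\am$ is bilinear (Lemma~\ref{am.def}) with $\hv+1$ nonzero in $k$, it is enough to prove
\[
S(\es)\am S(CD) = (\hv+1)\,S(CD)\qquad\text{for all }C,D\in\g.
\]
I would compute the left-hand side by substituting $A=X_i$ and $B=Y_i$ into \eqref{prod.full} and summing over $i$; this is legitimate because Lemma~\ref{am.def} guarantees $\am$ is well defined on $A(\g)\times A(\g)$ independently of the chosen representatives in $\Sym^2\g$.

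This yields five sums, one per line of \eqref{prod.full}. The second line, with the summation moved inside the bilinear map $S$, equals $\tfrac{\hv}{2}\bigl(S(C\cdot MD)+S(MC\cdot D)\bigr)$ with $M:=\sum_i \ad X_i\jord\ad Y_i$; and $M=\Id_\g$ by Lemma~\ref{qcas}\eqref{qcas.end} applied to the adjoint representation (cf.\ Example~\ref{S.es}), used for both orderings of the dual pair of bases, so this line contributes $\hv\,S(CD)$. The fourth and fifth lines collapse via the dual-basis contractions $\sum_i K(X_i,v)Y_i=\sum_i K(Y_i,v)X_i=v$, each contributing $\tfrac12\,S(CD)$. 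Thus the statement reduces to showing that the first and third lines of \eqref{prod.full}, after substitution and summation, cancel each other.

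I would check that cancellation inside $\End(\g)$ using the identification \eqref{end.id}, under which $\sum_i X_i\ot NY_i\mapsto N^\top$ and $\sum_i NX_i\ot Y_i\mapsto N$ for every $N\in\End(\g)$, while the natural surjection $\g\ot\g\to\Sym^2\g$ followed by $P$ corresponds to the symmetrization $N'\mapsto\tfrac12\bigl(N'+(N')^\top\bigr)$. Writing $T:=\ad C\jord\ad D\in\Herm(\g)$, the first line becomes $\tfrac{\hv}{2}\,S\bigl(P^{-1}(T+T^\top)\bigr)$. In the third line, $[X_i,C]=-(\ad C)X_i$ and $[Y_i,D]=-(\ad D)Y_i$, so the two sign changes cancel and $[X_i,C]\cdot[Y_i,D]=(\ad C\,X_i)\cdot(\ad D\,Y_i)$; the corresponding tensor $\sum_i(\ad C\,X_i)\ot(\ad D\,Y_i)$ maps to $-(\ad C)(\ad D)$ under \eqref{end.id}, and symmetrizing while using $(\ad C)^\top=-\ad C$ and $(\ad D)^\top=-\ad D$ turns the third line into $-\tfrac{\hv}{2}\,S\bigl(P^{-1}(T+T^\top)\bigr)$, the negative of the first. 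Adding all five lines gives $\hv\,S(CD)+S(CD)=(\hv+1)\,S(CD)$, and dividing by $\hv+1$ and invoking bilinearity of $\am$ completes the argument.

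The step I expect to be the genuine obstacle is precisely this cancellation of lines one and three: one has to track the transpose operator and the factor $\tfrac12$ coming from the symmetrization $\g\ot\g\to\Sym^2\g$ carefully, and it is exactly the antisymmetry $(\ad Z)^\top=-\ad Z$ that supplies the opposite sign. The remaining ingredients --- Example~\ref{S.es}, the two dual-basis contractions, and linearity --- are routine bookkeeping.
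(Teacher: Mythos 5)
Your proof is correct, and it follows the same overall skeleton as the paper's --- substitute $\es$ into one factor of \eqref{prod.full}, get $\hv S(CD)$ from the Casimir identity (Lemma \ref{qcas}\eqref{qcas.end}) on one line and $\tfrac12 S(CD) + \tfrac12 S(CD)$ from the dual-basis contractions on two others, and show the two remaining lines cancel --- but the mechanism you use for that cancellation is genuinely different. The paper puts $A=B=X_i$, $C=D=Y$ for an orthonormal basis over the algebraic closure and invokes the equivariance identity \eqref{A.eq} twice to package the first and third lines as $\tfrac{\hv}{2}[\ad Y,[\ad Y, S(\sum_i X_i^2)]]$, which vanishes because $S(\es)=(\hv+1)\Id_\g$ is central in $\End(\g)$; you instead put the dual pair in the $A,B$ slots, keep $C,D$ arbitrary, and verify the cancellation by tracking the identification \eqref{end.id} and the antisymmetry $(\ad Z)^\top=-\ad Z$ directly. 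Your bookkeeping checks out: $\sum_i X_i\ot NY_i\mapsto N^\top$ and $\sum_i NX_i\ot Y_i\mapsto N$, so line one contributes $\hv S(P^{-1}(T))$ with $T=\ad C\jord\ad D\in\Herm(\g)$, while $\sum_i(\ad C\,X_i)\ot(\ad D\,Y_i)\mapsto \ad C\,(\ad D)^\top=-\ad C\ad D$ makes line three contribute $-\hv S(P^{-1}(T))$. What each approach buys: the paper's derivation-identity trick is shorter and reuses a computation already needed for Lemma \ref{am.def}, whereas yours avoids the field extension (a dual pair of bases exists over $k$ since $K$ is nondegenerate), treats a general spanning element $S(CD)$ rather than $S(Y^2)$, and makes the role of the transpose structure on $\Herm(\g)$ explicit. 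Both are complete proofs.
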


\begin{proof}
First note that $e$ is in $A(\g)$ by Example \ref{S.es}.
We may enlarge our base field and so assume that $k$ is algebraically closed and in particular that $\g$ has an orthonormal basis $\{ X_i \}$.  Combining \eqref{prod.full} and \eqref{A.eq}, we obtain
\begin{multline*}
S(X_i^2) \am S(Y^2) = \frac{\hv}2 [ \ad Y, [\ad Y, S(X_i^2)]] + \hv S((\ad X_i)^2 Y, Y) \\+ K(X_i, Y) S(X_i Y).
\end{multline*}
If we sum both sides over $i$, we have $(\hv + 1) e \am S(Y^2)$ on the left by Example \ref{es.eg} and  $0+\hv S(Y^2) + S(Y^2)$ on the right.  Consequently $S(Y^2) \am e = S(Y^2)$, as required.
\end{proof}

\begin{rmk} \label{DMVC.rmk}
The paper \cite{DeMedtsVC} constructs an algebra $A$ similar to $A(\g)$ that is also a subspace of $\Herm(\g)$, but with a different product, which we denote by $\Mm$ for the moment.  It defines $a \Mm a'  := \Proj_A(a \jord a')$, which differs from our product defined in \eqref{prod.full}.  The analog of \eqref{prod.full} for their multiplication $\Mm$ has additional terms.  For the case where $G$ has type $E_8$, both algebras can be viewed as different ways of adding a unit to the irreducible 3875-dimensional representation.  Since that representation supports a unique $E_8$-invariant product, the difference between our multiplications is necessarily minor.  That is, if our $A(\g)$ is written as $\Unit(V, f)$ in the notation of section \ref{unitizing.sec}, then theirs is $\Unit(V, cf)$ for some invertible $c \ne 1$ in $k$.
\end{rmk}

\subsection*{A Jordan subalgebra}
Suppose that $\lsub$ is an abelian subalgebra of $\g$.  (For example, one could take $\lsub = \h$.)   Define a $k$-linear map
\begin{equation} \label{jord.incl}
i \!: P(\Sym^2 \lsub) \to A(\g) \quad \text{via} \quad i(P(xy)) := S(xy).
\end{equation}
Writing out \eqref{prod.full}, we find that
\[
i(P(xy) \jord P(zw)) = S(xy) \am S(zw),
\]
i.e., $i$ is an algebra homomorphism, and the image of $P(\Sym^2 \lsub)$ is a Jordan subalgebra of $A(\g)$.  (Note that the identity element of $P(\Sym^2 \lsub)$ need not map to the identity element of $A(\g)$, see the proof of Prop.~\ref{nonpa}.)

\begin{lem} \label{jord.inj}
If $\lsub$ is an abelian subalgebra of $\g$ and the Killing form $K$ restricts to be nondegenerate on $\lsub$, then the homomorphism \eqref{jord.incl} is injective.
\end{lem}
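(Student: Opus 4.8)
The plan is to deduce injectivity of $i$ from the injectivity of $P$ on $\Sym^2 \lsub$. The latter is immediate: $P \colon \Sym^2 \g \to \End(\g)$ is injective by Definition~\ref{P.def}, and $\Sym^2 \lsub \hookrightarrow \Sym^2 \g$ since $\car k \ne 2$, so $P$ is injective on $\Sym^2\lsub$. Because $i(P(w)) = S(w)$ for $w \in \Sym^2 \lsub$, it therefore suffices to prove that, for such $w$, the equation $S(w) = 0$ forces $P(w) = 0$; note that no extension of scalars is needed for this.

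So suppose $w = \sum_j x_j y_j$ with all $x_j, y_j \in \lsub$, and assume $S(w) = 0$. By \eqref{S.def}, $S(w) = \hv \sum_j \ad(x_j) \jord \ad(y_j) + P(w)$. I would check two things. First, since $\lsub$ is abelian, $\ad(x)$ annihilates $\lsub$ for every $x \in \lsub$, so the $\hv$-weighted $\ad$-term restricts to $0$ on $\lsub$; hence $0 = S(w)|_\lsub = P(w)|_\lsub$. Second, from the formula $P(xy)(Z) = \tfrac12\big( K(y,Z)\,x + K(x,Z)\,y \big)$ of Definition~\ref{P.def}, for $x, y \in \lsub$ the operator $P(xy)$ kills the $K$-orthogonal complement $\lsub^{\perp}$ of $\lsub$ in $\g$; hence so does $P(w)$. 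Since $K$ restricts to a nondegenerate form on $\lsub$, we have $\g = \lsub \oplus \lsub^{\perp}$, and the two vanishing statements combine to give $P(w) = 0$ on all of $\g$. Injectivity of $P$ then yields $w = 0$, so $\ker i = 0$.

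I do not expect a genuine obstacle here; the argument is a short piece of linear algebra. The one point that does the work is the first observation — abelianness of $\lsub$ makes the $\ad$-part of $S$ disappear on $\lsub$, leaving precisely $P(w)$ there — and the rest is bookkeeping with the decomposition $\g = \lsub \oplus \lsub^{\perp}$. In the writeup I would make explicit that this decomposition, and hence the conclusion, is exactly where the hypothesis that $K|_\lsub$ is nondegenerate enters (the abelian hypothesis having already been used to make \eqref{jord.incl} an algebra homomorphism).
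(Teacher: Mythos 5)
Your proof is correct and takes essentially the same route as the paper's: both hinge on the observation that, because $\lsub$ is abelian, the restriction of $S(w)$ to $\lsub$ coincides with the restriction of $P(w)$, and then use the nondegeneracy of $K\vert_\lsub$ to recover $P(w)$ from that restriction. The only difference is in packaging the last step --- the paper cites the identification of $P(\Sym^2\lsub)$ with $\Herm(\lsub)$, while you unpack the same fact via the decomposition $\g = \lsub \oplus \lsub^{\perp}$ and the injectivity of $P$.
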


Note that when $K\vert_\lsub$ is nondegenerate, the isomorphism $\g \ot \g \iso \g \ot \g^*$ restricts to an isomorphism $\ell \ot \ell \iso \ell \ot \ell^*$ which identifies $P(\Sym^2 \lsub)$ with the Jordan algebra $\Herm(\lsub)$ of symmetric elements in $\End(\lsub)$.

\begin{proof}
The definition of $S$ shows that $i(P(\Sym^2 \lsub))$, as a subspace of $\End(\g)$, acts on $\lsub$ via $i(P(X^2))(Y) = P(X^2)(Y)$ for all $X, Y \in \lsub$.  The nondegeneracy of $K$ then identifies $i(P(\Sym^2 \lsub))$ with the symmetric elements in $\End(\lsub)$.
\end{proof}

\begin{eg} \label{cont}
Suppose $G$ is split and not of type $A_1$ nor $A_2$.   Fix a Chevalley basis for $G$ as in \S\ref{Ag.sec}.  For $H \in \h$ such that $K(H,H)$ is not zero,  the element $u_H := i(P(H^2))/K(H,H)$ is an idempotent in $A(\g)$.   This provides an idempotent in $A(\g)$ for every element of $\mathbb{P}(\h)$ in the complement of the quadric hypersurface defined by $K(X,X) = 0$.
Clearly, if $k$ is infinite, there are infinitely many idempotents in $A(\g)$.

Now, there is a positive root $\gamma$ that is orthogonal to the highest root $\hst$.  For 
the element $S(X_\hst X_\gamma)$, which is nonzero by Lemma \ref{high.wt}\eqref{high.0}, we have 
\[
u_H \am S(X_\hst X_\gamma) = \la_H S(X_\hst X_\gamma) \quad \text{for} \quad
\la_H = \frac{\hv ((\hst + \gamma)(H))^2}{2 K(H,H)}.
\]
The map $H \mapsto \la_H$ is a rational function $\h \dashrightarrow k$ that is not constant and therefore is dominant.  In particular, the collection of eigenvalues of the maps $x \mapsto u \am x$ as $u$ varies over the idempotents of $A(\g)$ is not contained in $\{ 0, \frac12, 1 \}$, and therefore $A(\g)$ is not power-associative, cf.~\cite[Ch.~V]{Schfr}.
\end{eg}

%%%%%%%%%%%%%%%%%%%%%%%%%%%%%%%%%%%%%%%%%%%%%%%%%%%%%%%%%%%%%%%%%
\section{$A(\g)$ as an algebra obtained by adding a unit}  \label{pa.sec}

The usual trace form $\Tr \colon \End(\g) \to k$ is linear and $G$-invariant.  We use it to define a counit, in the sense of the appendix, as $\counit := \textstyle\frac1{\dim G} \Tr$ so that $\counit(e) = 1$, for $e = \Id_\g$ the identity element in $A(\g)$ (Lemma \ref{id.lem}).  Thus we obtain a bilinear form $\tbil$ on $A(\g)$ via \eqref{tbil.def}, $\tbil(a, a') := \counit(a \am a')$.  The form $\tbil$ is evidently $G$-invariant (because $\Tr$ and $\am$ are), symmetric (because $\am$ is commutative), and bilinear.

\begin{eg} \label{eq5}
For $X, Y \in \g$, Example \ref{tr.Sx} gives 
\begin{equation} \label{tbil.Sx}
\tbil(e, S(XY)) = \frac{\hv + 1}{\dim G} K(X,Y) \quad \text{for $X, Y \in \g$.}
\end{equation}
We also note for future reference:
\begin{align*}
\tbil(S(X^2), S(Y^2)) &= \textstyle\left( \frac{\hv + 1}{\dim G} \right) \left( -\hv K([X,Y], [X,Y]) + K(X,Y)^2 \right) \\
&= \textstyle\left(\frac{\hv + 1}{\dim G} \right) K(S(X^2) Y, Y).
\end{align*}
\end{eg}

Using the counit $\counit$ defined above, the algebra $A(\g)$ can be viewed as an algebra $\Unit(V, f)$ as in the appendix, where $V$ is the vector space $\ker \counit$ endowed with the commutative product $\cdot$ and $f$ as defined in \eqref{AV}.  With this notation, we prove:

\begin{prop} \label{nonpa}
If $G$  is not of type $A_1$ nor $A_2$, then:
\begin{enumerate}
\item \label{nonpa.nz} The multiplication $\cdot$ on $V$ is not zero.
\item \label{nonpa.nonpa} Neither $V$\! nor $\Unit(V, cf)$ is power-associative for any $c \in k$.
\end{enumerate}
\end{prop}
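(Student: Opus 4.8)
The plan is to exhibit a single element of $V$ whose fourth power fails to be well defined, reusing the idempotents and eigenvectors of Example~\ref{cont}. Since both claims assert the non-vanishing of a polynomial map on $V$—the multiplication for \eqref{nonpa.nz}, and the degree-$4$ power-associator $x\mapsto(xx)(xx)-((xx)x)x$ for \eqref{nonpa.nonpa}—and such non-vanishing is unaffected by field extension, I assume $k=\kalg$; then $G$ is split and I fix a Chevalley basis as in \S\ref{Ag.sec}. Because $G$ is not of type $A_1$ nor $A_2$, Example~\ref{cont} supplies a positive root $\gamma$ orthogonal to the highest root $\hst$, and I set $w:=S(X_\hst X_\gamma)$; it is nonzero by Lemma~\ref{high.wt}\eqref{high.0}, and $\counit(w)=\frac1{\dim G}\Tr(S(X_\hst X_\gamma))=\frac{\hv+1}{\dim G}K(X_\hst,X_\gamma)=0$ by Example~\ref{tr.Sx}, so $w\in V$. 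For $H\in\h$ with $K(H,H)\ne0$ I take the idempotent $u_H=i(P(H^2))/K(H,H)=S(H^2)/K(H,H)$ of Example~\ref{cont}; by Example~\ref{tr.Sx} its counit $c_0:=\counit(u_H)=\frac{\hv+1}{\dim G}$ does not depend on $H$, so $v_H:=u_H-c_0e$ lies in $V$. I will record the three identities
\begin{align*}
v_H\am v_H &= c_0(1-c_0)\,e+(1-2c_0)\,v_H,\\
v_H\am w &= (\la_H-c_0)\,w,\\
w\am w &= 0
\end{align*}
with $\la_H$ as in Example~\ref{cont}: the first by squaring $u_H=c_0e+v_H$ and using $u_H\am u_H=u_H$ and Lemma~\ref{id.lem}; the second from $u_H\am w=\la_H w$ and $w\in V$; and the third—the one hand computation—by substituting $A=C=X_\hst$, $B=D=X_\gamma$ into \eqref{prod.full} and observing that $\ad X_\hst$ and $\ad X_\gamma$ commute (since $\hst+\gamma$ is not a root, $[X_\hst,X_\gamma]=0$, and $[X_\hst,X_\hst]=[X_\gamma,X_\gamma]=0$) while $K(X_\hst,X_\hst)=K(X_\hst,X_\gamma)=K(X_\gamma,X_\gamma)=0$, so that all five lines on the right vanish.

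For part~\eqref{nonpa.nz} I will argue by contradiction: if $\cdot$ were identically zero then every $\am$-product of elements of $V$ would lie in $ke$, so $v_H\am w=(\la_H-c_0)w$ would lie in $ke\cap V=0$; as $w\ne0$ this would force $\la_H=c_0$ for all $H$ with $K(H,H)\ne0$, contradicting the fact (Example~\ref{cont}) that $H\mapsto\la_H$ is a non-constant rational function. Hence $\cdot\ne0$.

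For part~\eqref{nonpa.nonpa} I will fix $c\in k$, put $x:=v_H+w\in V$, and compute its powers inside the subalgebra $\mathrm{span}(e,v_H,w)$ of $\Unit(V,cf)$—which the three identities show is closed—to obtain
\[
(xx)(xx)-((xx)x)x=(2\la_H-1)\bigl((c-1)\,c_0(1-c_0)+\la_H(1-\la_H)\bigr)\,w
\]
in $\Unit(V,cf)$. The coefficient of $w$ is a degree-$3$ polynomial in $\la_H$ (leading term $-2\la_H^3$, nonzero since $\car k\ne2$), hence not identically zero; since $\la_H$ takes infinitely many values as $H$ runs over $\{H\in\h:K(H,H)\ne0\}$, I can choose $H$ for which this coefficient is nonzero, whereupon the degree-$4$ power identity fails for $x$ and $\Unit(V,cf)$ is not power-associative. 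Applying this with $c=0$ and restricting to the subalgebra $V\subseteq\Unit(V,0)$, whose induced product is $\cdot$, the same $x$ witnesses that $V$ itself is not power-associative.

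I expect the hardest part to be simply bookkeeping: checking that $w\am w=0$ and expanding the single displayed cubic. Both are routine once the $3\times3$ multiplication table of $e,v_H,w$ is written down, and the only genuinely new ideas are the choice of the witness $x=v_H+w$ and the reduction of everything to a $3$-dimensional subalgebra. No restriction on $\car k$ beyond $\ne2$ enters, because I use only the failure of the bare degree-$4$ power identity rather than the Peirce decomposition of power-associative algebras (which would require $\car k\notin\{2,3,5\}$).
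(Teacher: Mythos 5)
Your proof is correct, but it takes a genuinely different route from the paper's. For part \eqref{nonpa.nz} the paper works with an orthonormal basis $X_1,\dots,X_\ell$ of $\h$ and exhibits a nonzero product directly ($S(X_1X_2)\am S(X_1X_3)=\tfrac14 S(X_2X_3)$ when $\ell\ge 3$, with a separate rank-two argument exploiting $\hv\ne h$); your use of $v_H\cdot w=(\la_H-c_0)w$ together with the non-constancy of $\la_H$ avoids that case division entirely. For part \eqref{nonpa.nonpa} the paper argues structurally: it takes the power-associative Jordan subalgebra generated by $e$ and $b=i(P(X_1^2)+tP(X_2^2))$, invokes Proposition \ref{unit.unique} to rule out strict power-associativity of $\Unit(V\cap B,cf)$ for every $c\ne 1$, and then uses Albert's theorem (hence $\car k\ne 2,3,5$) to pass from ``not strictly power-associative'' back to ``not power-associative,'' with the case $c=1$ handled separately by Example \ref{cont}. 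You instead produce one explicit witness $x=v_H+w$ inside a visibly closed three-dimensional subalgebra and compute the degree-four associator as a cubic in $\la_H$ with leading term $-2\la_H^3$, which treats all $c$ (including $c=0$, giving the claim for $V$) uniformly and needs neither the appendix machinery nor Albert's theorem. I checked your multiplication table (the vanishing of all five lines of \eqref{prod.full} for $w\am w$ is right, since $[X_\hst,X_\gamma]=0$ and $K$ pairs only opposite weight spaces) and the displayed cubic; at $c=1$ it reduces to $(2\la_H-1)\la_H(1-\la_H)$, consistent with $u_H$ being an idempotent with Peirce eigenvalue $\la_H$ on $w$. The one point to tighten is your opening reduction: saying that non-vanishing of the degree-four power-associator ``is unaffected by field extension'' is literally true for the bilinear map in \eqref{nonpa.nz}, but for \eqref{nonpa.nonpa} a witness over $\kalg$ only shows the associator is a nonzero \emph{polynomial} over $k$; to get a $k$-point where it is nonzero (which is what failure of power-associativity over $k$ requires) you need $k$ infinite or $|k|$ larger than the relevant degree. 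Under the paper's standing hypothesis $\car k=0$ or $\car k\ge h+2>4$ this is automatic, but it deserves a sentence, or else one falls back on Albert's theorem exactly as the paper does.
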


For the excluded cases of $A_1$ and $A_2$, see Examples \ref{A1} and \ref{A2} respectively.
 
\begin{proof}
For each claim, we may enlarge $k$ and so assume that the Lie algebra $\h$ of some maximal torus in $G$ has an orthonormal basis $X_1, \ldots, X_\ell$.  We set $B := i(P(\Sym^2 \h))$.

We begin with \eqref{nonpa.nz}.  By \eqref{tbil.Sx}, for $i \ne j$, $S(X_i X_j)$ is in $V$.  On the other hand, if $\ell \ge 3$,
\[
S(X_1 X_2) \am S(X_1 X_3) = i(P(X_1X_2) \jord P(X_1 X_3)) = \textstyle\frac14 S(X_2 X_3) \ne 0
\]
and we are done.  If $\ell = 2$, then $e' := S(X_1^2 + X_2^2)$ is the identity element in $B$ by Example \ref{jord.eg}, yet 
\[
s := \tbil(e, e') = 2\frac{\hv + 1}{\dim G} = \frac{\hv + 1}{h + 1}
\]
is not 1 because $G$ is not of type $A_2$.  Then $e' - se$ is in $V$ and $(e' - se) \cdot S(X_1X_2) = (1 - s) S(X_1 X_2) \ne 0$, verifying \eqref{nonpa.nz}.

For \eqref{nonpa.nonpa}, put $r := (\hv + 1)/(\dim G)$, a rational number whose denominator is not divisible by $\car k$.  Since $\hv \le h$, $0 < r \le 1/2$.  Define a map $S^+ \!: \Sym^2 \g \to V$ by $S^+(p) = S(p) - \counit(S(p))\,e$.  Applying Example \ref{eq5}, we find:
\begin{equation} \label{tau.splus}
\tbil(S^+(X^2), S^+(X^2)) = r (1 - r) K(X,X)^2 \quad \text{for $X \in \g$.}
\end{equation}
Therefore $\tbil$ (equivalently, $f$) is not zero on $V$, and in particular $f$ is not alternating.

Set $b := i(P(X_1^2) + t P(X_2^2))$ where $t \in k$ is neither 0 nor 1, so $(1,0)$, $b$, and $b^2 = i(P(X_1^2) + t^2 P(X_2^2))$ are linearly independent (Lemma \ref{jord.inj}).  
Let $B$ be the subalgebra of $\Unit(V,f)$ generated by $(1, 0)$ and $b$.  Then $B = \Unit(V \cap B, f \vert_{V \cap B})$, and $B$ is power-associative because $b$ generates a Jordan subalgebra of $\Unit(V,f)$. 

We have already observed in Example \ref{cont} that $\Unit(V, f)$ is not power-associative, so we fix $c \ne 1$.  By Proposition \ref{unit.unique}, $\Unit(V \cap B, cf\vert_{V \cap B})$ is not strictly power-associative, and so $\Unit(V, cf)$ is not strictly power-associative either.  It follows that $\Unit(V, cf)$ is not power-associative, because $\car k \ne 2, 3, 5$ and $\Unit(V, cf)$ is commutative.  The case $c = 0$ gives that $V$ itself is not power-associative.
\end{proof}

As opposed to defining the product on $A(\g)$ via \eqref{prod.full}, one could build $A(\g)$ ``from below'' by starting with a $G$-invariant commutative product $\cdot$ on a representation $V$ and a $G$-invariant bilinear form $f$ and setting $A(\g)$ to be $\Unit(V, f)$.  In case $G$ has type $E_8$ and $V$ is the irreducible 3875-dimensional representation, both $\cdot$ and $f$ are uniquely determined up to a factor in $k^\times$.  But only the scalar factor on $f$ matters (Remark \ref{unit.nomu}), and \eqref{nonpa.nonpa} says that the resulting algebra is not power-associative, no matter what choice one makes for that parameter.

Similarly, the conclusion of Lemma \ref{assoc} below would be unchanged by multiplying $f$ by a scalar factor, as is clear from Proposition \ref{app.assoc}.

%%%%%%%%%%%%%%%%%%%%%%%%%%%%%%%%%%%%%%%%%%%%%%%%%%%%%%%%%
\section{Associativity of the bilinear form $\tbil$} \label{assoc.sec}

The following property of the symmetric bilinear form $\tbil$ on $A(\g)$ is sometimes described as saying that ``$\tbil$ is associative'', especially in the context of Dieudonn\'e's Lemma as in \cite[pp.~199, 239]{Jac:J}.

\begin{lem} \label{assoc}
The bilinear form $\tbil$ on $A(\g)$ satisfies
\begin{equation} \label{t.assoc}
\tbil(a \am a', a'') = \tbil(a,a' \am a'') \quad \text{for all $a, a', a'' \in A(\g)$.}
\end{equation}
\end{lem}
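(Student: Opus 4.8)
The plan is to reduce the identity \eqref{t.assoc} to a tractable statement about the explicit product \eqref{prod.full} evaluated on the spanning elements $S(XY)$. Since $\tbil$ is symmetric and $\am$ is commutative, and since $A(\g)$ is spanned by elements of the form $S(X^2)$ for $X \in \g$ (by linearization), it suffices to verify $\tbil(S(X^2)\am S(Y^2), S(Z^2)) = \tbil(S(X^2), S(Y^2)\am S(Z^2))$ for all $X, Y, Z \in \g$. As in the proofs of Lemmas \ref{am.def} and \ref{id.lem}, one may enlarge $k$ to an algebraic closure, so that $K$ is nondegenerate and $\g$ has an orthonormal basis; this costs nothing and makes the bookkeeping cleaner.

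The key computational tool is the formula $\tbil(a, a') = \counit(a \am a') = \frac{1}{\dim G}\Tr(a \am a')$ combined with the values already recorded in Examples \ref{tr.Sx} and \ref{eq5}. Concretely, I would first simplify $S(X^2) \am S(Y^2)$ using \eqref{prod.full} together with the $\g$-equivariance identity \eqref{A.eq}, exactly as was done in the proof of Lemma \ref{id.lem}, obtaining
\[
S(X^2)\am S(Y^2) = \tfrac{\hv}{2}[\ad X,[\ad X, S(Y^2)]] + \hv\, S((\ad Y)^2 X, X) + K(X,Y)\,S(XY).
\]
Then applying $\counit$ after multiplying by $S(Z^2)$, and using $\Tr$-linearity together with the expression for $\Tr(S(XY))$ from Example \ref{tr.Sx} and for $\tbil(S(X^2), S(Y^2))$ from Example \ref{eq5}, one reduces both sides to polynomial expressions in $K$ and nested Lie brackets of $X, Y, Z$. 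The desired equality should then follow from the invariance of $K$ under $\ad$ (i.e., $K([U,V],W) = -K(V,[U,W])$) and the symmetry of the trace form, which together force the two sides to agree.

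An alternative, cleaner route — which is probably the one the authors intend, given the setup of \S\ref{pa.sec} and the reference to Proposition \ref{app.assoc} at the end of \S\ref{pa.sec} — is to recognize $\tbil$ as the form $f$ on $\Unit(V,f)$ coming from the appendix construction, and invoke the general criterion (Proposition \ref{app.assoc}) characterizing when such a form is associative in terms of a symmetry property of the underlying product $\cdot$ on $V$ and the form $f$. Under that reduction, the content becomes: the trilinear form $(a,a',a'')\mapsto \Tr\big((S(AB)\am S(CD))\,S(EF)\big)$, or equivalently the corresponding expression built from \eqref{prod.full}, is totally symmetric in its three $A(\g)$-arguments. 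Checking total symmetry of this trilinear form term-by-term against \eqref{prod.full} is mechanical but lengthy.

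The main obstacle I anticipate is the sheer combinatorial bulk of \eqref{prod.full}: it has five groups of terms, and pairing each against a third factor $S(EF)$ and taking traces produces many summands, each of which must be matched under the relevant symmetry using the $\ad$-invariance of $K$, the Jacobi identity, and the identity $\sum P(X_i^2) = -\hv \sum (\ad X_i)^2$ on $\ker S$ when one needs to move between $P$-terms and $\ad$-terms. Organizing this so the cancellations are visible — rather than getting lost in a morass of indices — is the real work; I would try to group terms by ``type'' (those involving $K(\cdot,\cdot)K(\cdot,\cdot)$, those involving one bracket, those involving two brackets) and verify the symmetry within each type separately, leaning on Example \ref{eq5} to handle the purely $K$-quadratic part quickly.
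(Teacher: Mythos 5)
Your overall strategy is the same as the paper's: reduce to $a = S(X^2)$, $a' = S(Y^2)$, $a'' = S(Z^2)$ (legitimate, since the defect $\tbil(a\am a',a'')-\tbil(a,a'\am a'')$ is trilinear and the $S(X^2)$ span $A(\g)$), expand via \eqref{prod.full}, and verify that the resulting expression is symmetric under swapping the outer arguments, i.e.\ under $X \leftrightarrow Z$. The reduction via Proposition \ref{app.assoc} in your second route changes nothing of substance, as you concede. The gap is in the sentence ``the desired equality should then follow from the invariance of $K$ under $\ad$ and the symmetry of the trace form'' and in the plan to ``verify the symmetry within each type separately.'' For the purely $K$-quadratic terms and the single-bracket terms this works (the latter after pairing terms that exchange under $X\leftrightarrow Z$), but for the group of terms quadratic in $\ad$, namely
\[
K((\ad Z)^2 Y, (\ad X)^2 Y) + K((\ad Z)^2 X, (\ad Y)^2 X) + K((\ad Z)^2 [X,Y], [X,Y]),
\]
no amount of moving $\ad$'s across $K$ makes the individual summands, or any obvious pairing of them, symmetric under $X\leftrightarrow Z$. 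The paper's actual proof hinges on a specific identity you have not supplied: using the Jacobi identity one shows this sum equals $-2\Psi$ where $\Psi = K([[X,Z],[X,Y]],[Y,Z])$, and $\Psi$ is invariant under \emph{all} permutations of $X,Y,Z$ because $(A,B,C)\mapsto K([A,B],C)$ is alternating and the three brackets $[X,Z],[X,Y],[Y,Z]$ are permuted (up to sign) by $S_3$ acting on $X,Y,Z$. That step is the real content of the lemma, and without it the ``mechanical but lengthy'' verification you defer would stall precisely at this group of terms.

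A secondary, smaller issue: computing $\tbil(S(X^2)\am S(Y^2), S(Z^2))$ by ``applying $\counit$ after multiplying by $S(Z^2)$'' requires a second application of \eqref{prod.full}; it is cleaner to use the bilinearization of Example \ref{eq5}, $\tbil(S(p), S(Z^2)) = \frac{\hv+1}{\dim G} K(S(p)Z, Z)$, applied to each term of your displayed formula for $S(X^2)\am S(Y^2)$. This is cosmetic, but it is how one arrives at the manageable expression on which the $\Psi$-identity is then deployed.
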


\begin{proof}
It suffices to verify this in the case $a = S(X^2)$, $a' = S(Y^2)$, and $a'' = S(Z^2)$ for $X, Y, Z \in \g$.  Expanding out following the definitions, one finds:
\begin{multline}  \label{assoc.1}
\left( \textstyle\frac{\dim G}{\hv+1} \right) \tbil(a \am a', a'') = \\(\hv)^2 \left( K((\ad Z)^2 Y, (\ad X)^2 Y) + K((\ad Z)^2 X, (\ad Y)^2 X) + K((\ad Z)^2 [X,Y], [X,Y]) \right) \\
+ \hv \left( K(Z,Y) K(Z, (\ad X)^2 Y) + K(X,Y) K((\ad Z)^2 Y, X) + K(Z,X) K(Z, (\ad Y)^2 X) \right) \\
+ \hv K([X,Y],Z)^2 + K(X,Y) K(X,Z) K(Y,Z).
\end{multline}
Put $\psi$ for the alternating trilinear form $\psi(A,B,C) = K([AB],C)$ on $\g$ and observe that $\Psi := \psi([X,Z], [X,Y], [Y,Z])$ is invariant under permutations of the variables $X$, $Y$, $Z$.  We have:
\begin{align*}
\Psi &= K([X,Z], [[[X,Y],Y],Z]) + K([X,Z],[Y,[[X,Y],Z]]) \\
&= -K((\ad Z)^2 Y, (\ad Y)^2 X) - K([[X,Z],Y], [Z,[X,Y]]) 
\end{align*}
Adding this equation to the same equation with $X$ and $Y$ swapped gives that $-2\Psi$ is the first term in parentheses on the right side of \eqref{assoc.1}.  That is, 
\begin{multline} \label{assoc.2}
\left(\textstyle\frac{\dim G}{\hv+1} \right)\tbil(a \am a', a'') = -2(\hv)^2 \Psi -\hv E \\ +\hv \psi(X,Y,Z)^2  
 + K(X,Y) K(X,Z) K(Y,Z)
\end{multline}
with 
\[
E = \psi(X,Y,[X,Z]) K(Y,Z) + \psi(X,Z,[Y,Z])K(X,Y) + \psi(Y,X,[Y,Z])K(X,Z).
\]
Each of the four terms on the right side of \eqref{assoc.2} is unchanged when we swap $X$ and $Z$, and therefore the claim is verified.
\end{proof}

\begin{rmk}
Here is another argument to show associativity of $\tbil$ that works when $G$ has type $E_8$.  In that case, $A(\g) = ke \oplus V$ where $V$ is an irreducible representation of $G$ (Lemma \ref{decomp.lem}), the restriction $f$ of $\tbil$ to $V$ is nondegenerate, and the space $(V^* \otimes V^* \otimes V^*)^G$ of $G$-invariant trilinear forms on $V$ is 1-dimensional.  It follows then that the linear maps defined by sending $v \ot v' \ot v'' \in \otimes^3 V$ to $f(v \cdot v', v'')$ and $f(v, v' \cdot v'')$ agree up to a scalar factor, where $f$ is the restriction of $\tbil$ to $V$.  The two cubic forms are nonzero (Prop.~\ref{nonpa}\eqref{nonpa.nz}) and agree when $v = v' = v''$ is a generic element of $V$, so the two forms agree in general, i.e., $f$ is associative with respect to the product $\cdot$, whence $\tbil$ is associative with respect to the product $\am$ on $A(\g)$ by Prop.~\ref{app.assoc}.
\end{rmk}
%%%%%%%%%%%%%%%%%%%%%%%%%%%%%%%%%%%%%%%%%%%%%%
\section{$A(\g)$ as a representation of $G$} \label{rep.sec}

The counit $\counit$ gives a direct sum decomposition $A(\g) = ke \oplus V$ as a representation of $G$.  In this section, we describe $V$ as a representation of $G$ and show that its dimension and character depend only on the root system of $G$ and not on the field $k$ nor even the characteristic of $k$.  We use the notion of Weyl module recalled in \S\ref{gen.sec}.  

\begin{eg}[$A(\sl_2)$] \label{A1}
Suppose $G$ is split of type $A_1$, so $\g = \sl_2$.  By hypothesis, $\car k$ is zero or at least 5, so the Weyl module $V(4)$ of $G$ with highest weight $4$ is irreducible over $k$ \cite{WinterSL2}.  It is a submodule of $\Herm(\g)$ generated by $P(X_\hst^2)$ and $\Herm(\g)/k$ is $V(4)$ by dimension count.  As $A(\g)$ does not meet $V(4)$ (Cor.~\ref{high.2hst}), it follows that $A(\sl_2) = k$ as a vector space, spanned by $\Id_\g$, i.e., $A(\sl_2)$ is identified with $k$ as a $k$-algebra.  
\end{eg}

The notion of Weyl module still makes sense when $G$ is not assumed to be split. In that case, one still picks a maximal torus $T$ defined over $k$.  Pick any Borel subgroup $B$ of $G \times \kalg$ containing $T$, equivalently, pick a cone of dominant weights in the character lattice $T^*$.  There is a natural action of the Galois group $\Aut(\kalg/k)$ on $T^*$, which maps the cone to itself if and only if $B$ is defined over $k$.  In any case, there is a canonical way to modify the action using the Weyl group to produce a new action of $\Aut(\kalg/k)$ on $T^*$ that does leave the cone invariant, see \cite[6.2]{BoTi} or \cite[\S3.1]{Ti:R}.  (This action permutes the simple roots and is determined by how it does so, and therefore is equivalent to an action of $\Aut(\kalg/k)$ on the Dynkin diagram of $G$.)

\begin{table}[hbt]
\[
\begin{array}{c|cccccc}
\text{type of $G$}&A_2&G_2&F_4&E_6&E_7&E_8 \\  \hline
\text{Dual Coxeter number $\hv$}&3&4&9&12&18&30 \\
\text{Coxeter number $h$}&3&6&12&12&18&30 \\
\text{Dominant weight $\la$}&\omega_1+\omega_2&2\omega_1&2\omega_4&\omega_1+\omega_6&\omega_6&\omega_1 \\
\text{Dim.~of irred.~rep.~$ L(\la)$}&8&27&324&650&1539&3875
\end{array}
\]
\caption{Data for some exceptional groups $G$.  The fundamental dominant weights in the formula for $\la$ are numbered as in \cite{Bou:g4}.} \label{dual.coxeter}
\end{table}

Suppose that $\la \in T^*$ is a dominant weight, is in the root lattice, and is fixed by the action of $\Aut(\kalg/k)$ on the dominant weights.  (This holds, for example, for $\la = 2\hst$ and any $G$, or for $G$ and $\la$ as in Table \ref{dual.coxeter}.)  Then there is a unique representation of $G$ over $k$ that becomes isomorphic to $V(\la)$ (respectively, $H^0(\la)$; resp.~$L(\la)$) over $\kalg$.  This is proved in \cite[Th.~3.3]{Ti:R} for the irreducible $L(\la)$, and the same argument works for the other two representations.  Therefore, for such a $\la$, it makes sense to use the same notation also for the representation of $G$ over $k$

\begin{prop} \label{decomp.lem}
Suppose
$G$ is as in Table \ref{dual.coxeter}, $\car k = 0$, or 
\[
\car k \ge \textstyle\binom{\dim G + 1}{2}/(\rank G).
\]
As a representation of $G$, $A(\g)$ is a direct sum of pairwise non-isomorphic irreducible modules and $\Herm(\g) = A(\g) \oplus V(2\hst)$.    Furthermore, 
if $G$ and $\la$ are as in Table \ref{dual.coxeter}, then $A(\g) = k \oplus L(\la)$.
\end{prop}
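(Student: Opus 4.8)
The plan is to prove the general assertions about the $G$-module structure of $A(\g)$ and then obtain the last sentence from the dimensions in Table~\ref{dual.coxeter}. Since $\g$, the map $S$, and hence $A(\g)$ are defined over $\Z$ via a Chevalley basis, I would first reduce to the case in which $k$ is algebraically closed of characteristic zero: the hypothesis on $\car k$ is imposed precisely so that $\Sym^2\g$ stays a semisimple $G$-module over $k$, with the same constituents and dimensions as in characteristic zero, and so that the Weyl modules $V(2\hst)$ and $V(\la)$ below remain irreducible over $k$; granting this (a routine reduction-mod-$p$ argument), the exact sequence $0\to\ker S\to\Sym^2\g\to A(\g)\to 0$ and the decomposition of $\ker S$ specialize without change, so it suffices to argue over $\C$. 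The crucial structural input there is that $\Sym^2\g$ is \emph{multiplicity free} for every simple $\g$ — classical, and visible from the known constituent lists or from the strict monotonicity of $\mu\mapsto\bil{\mu}{\mu+\sumroots}$ on the constituents. Using the $G$-equivariant isomorphism $P\colon\Sym^2\g\iso\Herm(\g)$ of Definition~\ref{P.def}, $A(\g)=\im S$ is isomorphic to the quotient $\Sym^2\g/\ker S$, hence automatically a direct sum of pairwise non-isomorphic irreducible modules, which is the first assertion. It then remains to identify $\ker S$ with the Cartan component $V(2\hst)$; once that is done, $\Herm(\g)=A(\g)\oplus V(2\hst)$ follows, since in a multiplicity-free module a submodule isomorphic to $V(2\hst)$ is the $V(2\hst)$-isotypic part and its $G$-module complement in $\Herm(\g)$ is a copy of $A(\g)$.

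One inclusion, $V(2\hst)\subseteq\ker S$, is Corollary~\ref{high.2hst}. For the reverse inclusion I would argue constituent by constituent, using that $\ker S$, being a submodule of the multiplicity-free $\Sym^2\g$, is a sum of some of its irreducible summands. The trivial summand is excluded by $S(\es)=(\hv+1)e\ne 0$ (Example~\ref{S.es}). For a summand $V(\mu)$ with $\mu\ne 0,2\hst$ it suffices to exhibit a weight vector $w$ of $\Sym^2\g$ of a weight $\nu$ occurring in no constituent other than $V(\mu)$ and $V(2\hst)$, with $S(w)\ne 0$: then $V(\mu)\not\subseteq\ker S$, since otherwise $S$ would annihilate $V(\mu)\oplus V(2\hst)$ and hence the whole $\nu$-weight space. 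To produce such $w$ I would write $\mu=\alpha+\beta$ with $\alpha,\beta$ roots and $\alpha+\beta$ not a root, so that $S(X_\alpha X_\beta)\ne 0$ by Lemma~\ref{high.wt}\eqref{high.0} whenever $\bil{\alpha}{\beta}=0$; in particular $S(X_{\alpha_0}^2)\ne 0$ for $\alpha_0$ the highest short root (Lemma~\ref{high.wt}\eqref{high.plus}), which handles $V(\la)$ in types $G_2$ and $F_4$, while an orthogonal pair of roots summing to $\la$ handles the simply-laced exceptional cases. The remaining constituents — chiefly the copy of the adjoint module inside $\Sym^2\g$ present exactly in type $A$ — would be dealt with by a short direct calculation from \eqref{S.def} at a convenient weight vector; the cases $A_1$ and $A_2$ are treated separately in Examples~\ref{A1} and~\ref{A2}.

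For the last sentence, when $G$ is one of the groups of Table~\ref{dual.coxeter} the listed dimensions give $1+\dim L(\la)+\dim L(2\hst)=\binom{\dim G+1}{2}=\dim\Sym^2\g$, so over $\C$ one has exactly $\Sym^2\g=L(0)\oplus L(\la)\oplus L(2\hst)$; with $\ker S=V(2\hst)$ this yields $A(\g)=L(0)\oplus L(\la)=k\oplus L(\la)$, and the identification descends to $k$ because $\la$ lies in the root lattice and is fixed by the $*$-action of $\Aut(\kalg/k)$. The main obstacle is the reverse inclusion $\ker S\subseteq V(2\hst)$ — ruling out that any constituent of $\Sym^2\g$ other than the Cartan component lies in $\ker S$, uniformly rather than type by type. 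The cleanest conceivable approach would be to compute the scalar by which the $G$-equivariant operator $P(XY)\mapsto\ad(X)\jord\ad(Y)$ of $\Herm(\g)$ acts on a constituent $V(\mu)$ as an explicit function of $\bil{\mu}{\mu+\sumroots}$, and to check that it equals $-1/\hv$ only when $\mu=2\hst$; producing that closed formula is itself the technical heart.
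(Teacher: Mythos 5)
Your overall strategy coincides with the paper's: take the known multiplicity-free decomposition of $\Sym^2\g\cong\Herm(\g)$ from the literature, observe that the Cartan component lies in $\ker S$ by Corollary \ref{high.2hst}, show each remaining nontrivial constituent survives $S$ by exhibiting an explicit weight vector with nonzero image via Lemma \ref{high.wt}, then transfer to positive characteristic and descend. Within that strategy, the step you defer is exactly the hard one. Your criterion for showing $V(\mu)\not\subseteq\ker S$ --- find a weight $\nu$ of $V(\mu)$ occurring in no constituent other than $V(\mu)$ and $V(2\hst)$ --- is unavailable precisely for the constituent you wave at: in type $A_n$ the extra constituent is the adjoint module $L(\hst)$, and every one of its weights (the roots and $0$) already occurs both in $L(2\hst)$ and in the other nontrivial summand $L(\hst+\beta)$, since $\hst<\hst+\beta<2\hst$ in the dominance order; the same obstruction arises for the highest-short-root constituent in type $C_n$, which you do not mention at all. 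One must instead write down a vector $p$ of that exact weight which is, modulo $\ker S$, a highest weight vector and verify $S(p)\ne0$; the paper gives rather elaborate explicit elements $p$ for types $A_n$ and $C_n$ and calls the verification ``lengthy,'' so ``a short direct calculation at a convenient weight vector'' is a placeholder for the technical core of the classical-type cases. Deferring $A_2$ to Example \ref{A2} is also circular, since that example rests on Proposition \ref{pi.prop}, whose proof invokes the present decomposition.

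The reduction to characteristic zero is likewise not as routine as claimed. Under the branch of the hypothesis where $G$ is in Table \ref{dual.coxeter} and $\car k$ is only bounded below by $h+2$, semisimplicity of $\Sym^2\g$ is not known --- McNinch's dimension criterion would need $p$ of order $(\dim G)^2/\rank G$ --- so ``the exact sequence specializes without change'' does not follow; the rank of $S$ could a priori drop mod $p$. The paper instead checks irreducibility of the specific Weyl modules $V_k(\la_i)$ and $V_k(2\hst)$ (L\"ubeck's tables for the groups of Table \ref{dual.coxeter}, McNinch's bound otherwise), gets a lower bound on $\dim\im S$ from the surviving highest weight vectors and an upper bound from semicontinuity, and uses the vanishing of extensions among the $L_k(\la_i)$ to conclude the sum is direct. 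You also omit the descent to non-split $G$ for the general statement, which requires handling the $*$-action (nontrivially in type $D_4$, where triality permutes the three constituents of $A(\g)/k$). Your closing suggestion --- computing the scalar by which $P(XY)\mapsto\ad(X)\jord\ad(Y)$ acts on each constituent as a function of the Casimir eigenvalue and checking it equals $-1/\hv$ only on $V(2\hst)$ --- is an attractive uniform alternative to the paper's case analysis, but as you acknowledge, it is not carried out.
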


Note that the displayed lower bound on $\car k$ grows like $(\rank G)^3$, so it is somewhat more restrictive than our global hypothesis that $\car k = 0$ or at least $h + 2$, because $h + 2$ grows like $\rank G$.

\begin{proof}[Proof of Proposition \ref{decomp.lem}]
We first address the case where $k$ is algebraically closed of characteristic zero.  Then $\Herm(\g) \cong k \oplus J \oplus L(2\hst)$ where $k$ is the span of $e$ and $L(2\hst)$ is the $G$-submodule generated by $P(X_\hst^2)$, which does not belong to $A(\g)$ by Corollary \ref{high.2hst}.  Writing $J$ as a sum of irreducible representations $\oplus_i L(\la_i)$, the values of $\la_i$ are known.  If $G$ is from Table \ref{dual.coxeter}, then $J = L(\la)$ is described in \cite{CohendeMan}, where it is denoted by $Y_2^*$.  If $G$ has type $A_1$, then $J = 0$.  Otherwise, $J$ is a sum of three irreducible components for type $D_4$ or two for the other types, see \cite{Vogel} and \cite{LandsMan:univ} for more on this decomposition and related subjects.  In all cases, the $\la_i$ are distinct, are not zero, and are maximal weights for $J$.

To complete the proof for this $k$, we must verify that $J \subseteq A(\g)$.  The bulk of the $\la_i$'s are of the form $\hst + \beta$ for a root $\beta$ obtained by the following procedure.  Take the Dynkin diagram for $G$, delete all simple roots that are not orthogonal to the highest root $\hst$, and select one of the connected components that remains.  It corresponds to a subsystem of the root system of $G$ and is the subsystem for a subalgebra $\g'$ of $\g$ normalized by our chosen maximal torus $T$.  (One says that $\g'$ is a \emph{regular} subalgebra.)  Put $\beta$ for the highest root of $\g'$ (in the ordering induced from the chosen ordering on the weights of $G$).  The element $S(X_\hst X_\beta)$ is not zero by Lemma \ref{high.wt}\eqref{high.0}, so we conclude that $S(X_\hst X_\beta)$ is a highest weight vector and $L(\hst + \beta) \subseteq A(\g)$.

\newcommand{\pshort}{\Phi_S}
For types $A_n$ and $C_n$ with $n \ge 2$, one component of $J$ is of the form considered in the previous paragraph (and so we have shown that it belongs to $A(\g)$) and the other is $\la$ for $\la$ the highest short root.  For type $A_n$, we set 
\begin{gather*}
\beta_j := \alpha_1 + \alpha_2 + \cdots + \alpha_j \quad \text{for $1 \le j < n$,} \\
\gamma_j := \alpha_j + \alpha_{j+1} + \cdots + \alpha_n \quad \text{for $1 < j \le n$, and} \\
p := 2 \hv X_\hst H'_{\omega_1-\omega_n} - \sum_{j=1}^{n-1} [X_\hst, X_{-\beta_j}] X_{\beta_j} + \sum_{j=2}^n [X_\hst, X_{-\gamma_j}] X_{\gamma_j} \quad \in \Sym^2 \g,
\end{gather*}
where the $\alpha_i$ are the simple roots as numbered in \cite{Bou:g4} and $\omega_i$ is the corresponding fundamental dominant weight.
%\[
%p := X_\hst H_{\omega_1 - \omega_n} - \sum_{j=1}^{n-1} [X_\hst, X_{-\beta_j}] X_{\beta_j} + \sum_{j=2}^n [X_\hst, X_{-\gamma_j}] X_{\gamma_j} \quad \in \Sym^2 \g.
%\]
For type $C_n$, $\la = \hst - \beta$ for $\beta$ the simple root not orthogonal to $\hst$, and we set
\[
p := 2X_\hst X_{-\beta} -  \sum_{\mu \in \pshort} [X_{-\mu}, X_\hst][X_{\mu}, X_{-\beta}] \quad \in \Sym^2 \g
\]
for $\pshort$ the set of short roots.
In either case, $p$ has weight $\la$ and a lengthy verification shows that $S(p)$ is not zero and is fixed by each unipotent subgroup of $G$ corresponding to a positive root, verifying that $\la$ is a highest weight vector in $A(\g)$ and therefore that $L(\la)$ is a summand of $A(\g)$ and completing the proof for this $k$.

\medskip
Next suppose that $k$ is algebraically closed of characteristic $p \ne 0$.  We transfer the results proved over $\C$ to $k$ via $R := \Z_{(p)}$.  We use subscripts $\C$, $k$, $R$ to denote corresponding objects over these three rings.  For example, let $G_R$ denote the unique split reductive group scheme over $R$ with the same root datum as $G$, so $G_R \times k \cong G$, and put $\g_R := \Lie(G_R)$.  For each dominant weight $\eta$, there is a Weyl module $V_R(\eta)$ of $G_R$ defined over $R$ such that $L_\C(\eta) = V_R(\eta) \times \C$ and $V_k(\eta) = V_R(\eta) \times k$.

The representations $V_k(\la_i)$ and $V_k(2\hst)$ are irreducible.  If $G$ is from Table \ref{dual.coxeter}, then this fact is contained in the tables in \cite{luebeck}.  Otherwise, $p \ge (\dim \Herm(g))/(\rank G)$, and every representation of $G$ of dimension at most $\dim \Herm(\g)$ is semisimple \cite[Cor.~1.1.1]{McNinch:ss}.  A semisimple Weyl module is irreducible \cite[Cor.~II.2.3]{Jantzen}, proving the claim.  The same argument shows that $V_k(\la_i)$ is irreducible for all $i$.

The map $S \!: \Sym^2 \g \to \Herm(\g)$ is defined over $R$ and the dimension of its image over $\C$ is at least as large as its image over $k$ by upper semicontinuity of dimension.  As the arguments above show that the irreducible representation $L_k(\la_i)$ belongs to $A(\g)$ over $k$ for all $i$ and there are no nontrivial extensions among the $L_k(\la_i)$ \cite[II.4.13]{Jantzen}, we conclude that $A(\g) \cong k \oplus \left( \oplus_i L_k(\la_i) \right)$ as a representation of $G$.

As a quotient of vector spaces, $\Herm(\g) / A(\g)$ is a representation of $G$ with highest weight $2\hst$, so there is a nonzero homomorphism $V_k(2\hst) \to \Herm(\g)/A(\g)$.  The preceding arguments showed that the dimension of $A(\g)$, hence the dimensions of both the domain and codomain of the map, do not depend on $k$.  Since $V_k(2\hst)$ is irreducible, the map is injective, so an isomorphism by dimension count.  There are no non-trivial extensions among the irreducible representations appearing in the composition series for $\Herm(\g)$, whence the claim in the second sentence of the proposition.

\medskip

Finally, drop the hypothesis that $k$ is algebraically closed; in particular $G$ need not be split.  The center of $G$ acts trivially on $\Herm(\g)$, so we may assume that $G$ is adjoint.  We view $G$ and the representation $A(\g)$ as being obtained from a representation $A(\g_0)$ of the unique split form $G_0$ of $G$ over $k$ by twisting by a 1-cocycle $\eta$ in Galois cohomology $Z^1(\Aut(\kalg/k), \Aut(G_0))$ as in \cite[\S{III.1.3}]{SeCG}.  (Recall that the component group of $\Aut(G_0)$ can be identified with the automorphism group of the Dynkin diagram as in \cite[Ch.~XXIV, 1.3, 3.6, 5.6]{SGA3.3:new} or \cite[\S16.3]{Sp:LAG}, and the image of $\eta$ in $Z^1(\Aut(\kalg/k), \Aut(G_0)/\Aut(G_0)^\circ)$ encodes the $*$-action.)  If $G$ is not of type $D_4$, then the $\la_i$'s are each fixed by the $*$-action and belong to the root lattice, hence each representation $L(\la_i)$ of $G_0$ is naturally compatible with the twisting by $\eta$, giving an irreducible representation of $G$ defined over $k$, as discussed before the statement of the proposition.  For $G$ of type $D_4$, $A(\g_0) = k \oplus L(\la_1) \oplus L(\la_2) \oplus L(\la_3)$ as a representation of $G_0$, and the $*$-action permutes the $\la_i$'s according to its action on the three terminal vertices in the Dynkin diagram.  As in \cite[Th.~7.2]{Ti:R}, we find that the representation $A(\g)/k$ of $G$, which is obtained by twisting the representation $A(\g_0)/k$ of $G_0$ by $\eta$, is a sum of $o$ distinct irreducible representations of $G$ over $k$, where $o$ is the number of orbits of $\Aut(\kalg/k)$ on the set $\{ \la_1, \la_2, \la_3 \}$.
\end{proof}

For $G$ as in Table \ref{dual.coxeter}, $\dim_k A(\g) = 1 + \dim L(\la)$ as provided in the table.  For $G$ of type $A$, $B$, $C$, or $D$ and under the hypotheses of Proposition \ref{decomp.lem}, we have
\begin{equation}
\dim_k A(\g) = \textstyle\binom{\dim G + 1}{2} - \dim V(2\hst),
\end{equation}
where $\dim V(2\hst)$ is given by the Weyl dimension formula.  

 %%%%%%%%%%%%%%%%%%%%%%%%%%%%%%%%%%%%%%%%%%%%%%%%%%%%%%%%%%%
 \section{$\tbil$ is nondegenerate and $A(\g)$ is simple} \label{ndeg.sec}
 
 \begin{prop} \label{metrized}
$\tbil$ is nondegenerate on $A(\g)$.
 \end{prop}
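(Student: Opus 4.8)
\emph{Setup.} Since nondegeneracy of a bilinear form is unaffected by extension of scalars, the plan is to first reduce to the case $k = \kalg$. The form $\tbil$ is $G$-invariant (because $\Tr$ and $\am$ are) and associative (Lemma~\ref{assoc}), so its radical
\[
N := \{\, a \in A(\g) \mid \tbil(a,\, A(\g)) = 0 \,\}
\]
is at once a $G$-submodule and a (two-sided) ideal of $A(\g)$; and $\tbil(e,e) = \counit(e) = 1 \neq 0$ forces $N \subseteq \ker\counit$. It remains to prove $N = 0$.

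\emph{Reduction to a computation on constituents.} By Proposition~\ref{decomp.lem}, $A(\g) = ke \oplus \bigoplus_i L(\la_i)$ with the $L(\la_i)$ pairwise non-isomorphic; as $N$ is a $G$-submodule not meeting $ke$, it is the sum of some of the $L(\la_i)$, so $N = 0$ is equivalent to: for each $i$, a highest weight vector of $L(\la_i)$ pairs nontrivially under $\tbil$ with some element of $A(\g)$. Per the proof of Proposition~\ref{decomp.lem}, every $\la_i$ is either of the form $\hst + \beta$ with $\beta$ the highest root of a regular subalgebra orthogonal to $\hst$, or else --- only for types $A_n$, $C_n$ with $n \geq 2$ --- one further weight there denoted $\la$ (namely $\omega_1 + \omega_n$, resp.\ $\omega_2$).

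\emph{The constituents $L(\hst+\beta)$.} For such an $i$ I would take the highest weight vector $S(X_\hst X_\beta)$, nonzero by Lemma~\ref{high.wt}\eqref{high.0}, and pair it against $S(X_{-\hst} X_{-\beta})$, which is also nonzero by that lemma (applied to $-\hst$, $-\beta$). Expanding via \eqref{prod.full} and applying $\counit = \tfrac1{\dim G}\Tr$ and $\Tr(S(UV)) = (\hv+1)K(U,V)$ (Example~\ref{tr.Sx}), all contributions should vanish --- since $\bil{\hst}{\beta} = 0$ gives $[H_\hst, X_{\pm\beta}] = [H_\beta, X_{\pm\hst}] = 0$ and $K(H_\hst, H_\beta) = 0$, while $\hst\pm\beta$ failing to be roots gives $[X_{\pm\hst}, X_{\pm\beta}] = 0$ and $K(X_\hst, X_{-\beta}) = K(X_\beta, X_{-\hst}) = 0$ --- except for the contribution of $\tfrac14 K(X_\hst, X_{-\hst})\,S(X_\beta X_{-\beta}) + \tfrac14 K(X_\beta, X_{-\beta})\,S(X_\hst X_{-\hst})$, so that, using \eqref{K.spst},
\[
\tbil\bigl(S(X_\hst X_\beta),\, S(X_{-\hst} X_{-\beta})\bigr) \;=\; \frac{\hv+1}{2\dim G}\,K(X_\hst, X_{-\hst})\,K(X_\beta, X_{-\beta}) \;=\; \frac{2\,\nu_\beta\,(\hv)^2(\hv+1)}{\dim G},
\]
which is nonzero in $k$ because $2$, $\nu_G$, $\hv$, $\hv+1$ and $\dim G$ are nonzero in $k$ under the standing hypotheses. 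Hence $L(\hst+\beta)\not\subseteq N$.

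\emph{The exceptional constituent, and the main obstacle.} For types $A_n$ and $C_n$ ($n\geq2$) there remains a single constituent, whose highest weight vector is $S(p)$ for the explicit $p\in\Sym^2\g$ exhibited in the proof of Proposition~\ref{decomp.lem}; I would pair $S(p)$ against the $S$-image of its Weyl-conjugate lowest weight vector and run the same --- though considerably longer --- expansion of \eqref{prod.full}, which should again return a value that is a unit in $\Z_{(\car k)}$, whence this constituent is excluded from $N$ as well. That gives $N = 0$. (Every identity invoked is an identity over $\Z_{(\car k)}$, so under the hypotheses of Proposition~\ref{decomp.lem} the argument applies verbatim in positive characteristic; one could instead deduce the characteristic-$p$ case from the characteristic-$0$ one by reduction modulo $\car k$.) The one genuinely laborious step --- and the main obstacle --- is this last expansion for the $A_n/C_n$ constituent; the constituents $L(\hst+\beta)$, which are all the others, are handled uniformly by the short calculation above.
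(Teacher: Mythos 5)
Your proposal is correct and follows essentially the same route as the paper: reduce to the split case, use the multiplicity-free decomposition $A(\g)=ke\oplus\bigoplus_i L(\la_i)$ so that nondegeneracy reduces to $\tbil$ being nonzero on each irreducible summand, and exhibit a nonzero pairing of the highest weight vector $S(X_\hst X_\beta)$ with $S(X_{-\hst}X_{-\beta})$ --- your explicit value agrees with the paper's Example~\ref{high.pd} (where $\nu_\hst=1$). The remaining summand for types $A_n$, $C_n$ is treated exactly as you propose (pairing $S(p)$ with $S(\theta p)$ for the Chevalley involution $\theta$), and the paper likewise only asserts, without displaying it, that this last calculation gives a nonzero answer.
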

 
 Some authors would summarize Lemma \ref{assoc} and Proposition \ref{metrized}, which say that $A(\g)$ has an associative and nondegenerate symmetric bilinear form, by saying ``$A(\g)$ is metrized''.

The proof leverages the following.

\begin{eg} \label{high.pd}
Suppose we are in the situation of Lemma \ref{high.wt}\eqref{high.0}, i.e., $G$ is split and $\alpha$, $\beta$ are orthogonal roots and $\alpha+ \beta$ is not a root.  Recall from \eqref{K.spst} that $K(X_{\gamma}, X_{-\gamma})$ is not zero in $k$ for every root $\gamma$ (and is positive when $k \subseteq \R$) and that $S(X_\alpha X_\beta) X_{-\alpha} = \nu_\alpha \hv X_\beta$.  Bilinearizing Example \ref{eq5}, we have
\begin{align*}
\tbil(S(X_\alpha X_\beta), S(X_{-\alpha} X_{-\beta})) &= \textstyle\left( \frac{\hv+1}{\dim G} \right) K(S(X_\alpha X_\beta) X_{-\alpha}, X_{-\beta}) \\
& = \textstyle\left( \frac{\hv+1}{\dim G} \right) 2 (\hv)^2 \nu_\alpha \nu_\beta,
\end{align*}
where the second equality is by \eqref{K.spst}.
Note that this is not zero in $k$.
Moreover, in case $k \subseteq \R$, the expression is positive.
\end{eg}

\begin{proof}[Proof of Proposition \ref{metrized}]
We may enlarge $k$ and so assume that $G$ is split.
Recall from \S\ref{rep.sec} that $A(\g) = ke \oplus (\oplus_i L(\la_i))$ for a set of dominant weights $\{ \la_i \}$.  This sum is an orthogonal sum with respect to $\tbil$, and therefore it suffices to verify the claims for the restriction of $\tbil$ to each $L(\la_i)$.

Pick $p \in \Sym^2 \g$ such that $S(p)$ is a highest weight vector in $L(\la_i)$.  In case $\la_i = \hst + \beta$ for some positive root $\beta$ orthogonal to $\hst$, we take $p := S(X_\hst X_\beta)$.  Otherwise, $\la$ is the highest short root and $G$ has type $A_n$ for $n \ge 3$ or $C_n$ for $n \ge 2$; in that case we take $p$ to be as in the proof of Proposition \ref{decomp.lem}.

Define $\theta$ to be the automorphism of $\g$ such that $\theta\vert_\h = -1$ and $\theta(X_\gamma) = X_{-\gamma}$ for each root $\gamma$.  Then $S(\theta p)$ is a lowest weight vector in $L(\la_i)$.  We verify that $\tbil(S(p), S(\theta p))$ is not zero; in the first case this is Example \ref{high.pd}, and in the second case a calculation is required.  Therefore, the restriction of $\tbil$ to $L(\la_i)$ is not zero, so it is nondegenerate, verifying the claim.
\end{proof}

\begin{cor} \label{metrized.pd}
If $k = \R$ and $G$ is compact, then $\tbil$ is positive-definite on $A(\g)$.
\end{cor}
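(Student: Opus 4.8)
The plan is to base change to $\C$, trade positive-definiteness of the symmetric form $\tbil$ for positive-definiteness of the associated Hermitian form, and then read everything off the decomposition of $A(\g)$ from \S\ref{rep.sec}. Set $\g_\C := \g \otimes_\R \C$ and $A(\g_\C) := \im S$; because $S$, $\am$ and $\counit$ are given by formulas with rational coefficients, $A(\g_\C) = A(\g) \otimes_\R \C$ and $\tbil$ extends to a $\C$-bilinear $G$-invariant form on it. Let $\sigma$ be the conjugation of $\g_\C$ over $\g$. Since $G(\R)$ is compact, $\g$ is a compact real form, so a Chevalley basis of $\g_\C$ can be chosen with $\sigma(H_\gamma) = -H_\gamma$ and $\sigma(X_\gamma) = -X_{-\gamma}$ for every root $\gamma$; as $S$ has rational coefficients, $\sigma$ transports to a conjugate-linear map on $A(\g_\C)$ commuting with $S$. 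Then $H(a,b) := \tbil(a, \sigma b)$ is a $G(\R)$-invariant Hermitian form on $A(\g_\C)$ with $H(a,a) = \tbil(a,a)$ for $a \in A(\g)$, so it suffices to prove that $H$ is positive-definite.

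By the algebraically closed case of \S\ref{rep.sec} applied to $\g_\C$, we have $A(\g_\C) = ke \oplus \bigoplus_i L(\la_i)$ with the $\la_i$ pairwise distinct dominant weights, and the summands are pairwise non-isomorphic irreducible complex representations of the compact group $G(\R)$ (being pairwise non-isomorphic irreducible $G_\C$-modules, as $G(\R)$ is Zariski-dense in $G_\C$). Hence the decomposition is $H$-orthogonal and it is enough to treat each summand. On $ke$ we get $H(e,e) = \tbil(e,e) = \counit(e \am e) = \counit(e) = 1 > 0$ by Lemma \ref{id.lem}. On each $L(\la_i)$, every $G(\R)$-invariant Hermitian form is a real scalar multiple of a fixed positive-definite one — the latter obtained by averaging over $G(\R)$, and uniqueness up to a real scalar being Schur's lemma since $\End_{G(\R)} L(\la_i) = \C$. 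Thus $H\vert_{L(\la_i)}$ is positive-definite as soon as $H(v,v) > 0$ for a single nonzero $v \in L(\la_i)$.

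Take $v := S(p_i)$, a highest weight vector of $L(\la_i)$, with $p_i \in \Sym^2 \g_\C$ chosen as in the proofs of Propositions \ref{decomp.lem} and \ref{metrized} (so one may take $p_i := X_\hst X_\beta$ in the first case below). Because $v$ has $\h$-weight $\la_i \ne 0$, the element $v \am v$ has weight $2\la_i \ne 0$, and $\counit = \tfrac1{\dim G}\Tr$ kills nonzero weight spaces of $\End(\g_\C)$; hence $\tbil(v,v) = 0$ and $H(v,v) = \tbil(v, \sigma v)$. If $\la_i = \hst + \beta$ for a positive root $\beta$ orthogonal to $\hst$, then $\sigma(p_i) = X_{-\hst} X_{-\beta}$, so $H(v,v) = \tbil(S(X_\hst X_\beta), S(X_{-\hst} X_{-\beta}))$, which is positive by Example \ref{high.pd} (we are in the case $k = \R \subseteq \R$). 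The only remaining summand occurs for $G$ of type $A_n$ or $C_n$ with $n \ge 2$, where $\la_i$ is the highest short root; there $\sigma(p_i)$ differs from $\theta(p_i)$, with $\theta$ as in the proof of Proposition \ref{metrized}, only by a global sign — since $\sigma$ and $\theta$ each send $H \mapsto -H$ and $X_\gamma \mapsto \pm X_{-\gamma}$ while every summand of $p_i$ is quadratic in root vectors or of the form $X_\hst H'$ — so $H(v,v) = \pm\,\tbil(S(p_i), S(\theta p_i))$, and one checks via the explicit computation already invoked in the proof of Proposition \ref{metrized} that this is positive when $k \subseteq \R$. (For type $A_1$, $A(\g_\C) = k$ and there is nothing to do.) Hence $H$ is positive-definite on every summand, so $\tbil$ is positive-definite on $A(\g)$.

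The step I expect to be the real obstacle is the last one: pinning down the \emph{sign}, not merely the non-vanishing, of $\tbil(S(p_i), S(\theta p_i))$ in types $A_n$ and $C_n$, which is the one place where a hands-on computation in $\Sym^2\g_\C$ cannot be avoided; everything else is formal once the decomposition of $A(\g_\C)$ and the passage to the Hermitian form are set up. One should also take a little care that the Chevalley basis can be chosen compatibly with the compact conjugation in the asserted way, and that the multiplicity-one information in \S\ref{rep.sec} really does force the summands of $A(\g_\C)$ to be pairwise non-isomorphic as complex $G(\R)$-modules, so that the $H$-orthogonal block decomposition is legitimate.
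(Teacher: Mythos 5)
Your proposal is, at bottom, the same argument as the paper's: both rest on the decomposition $A(\g) = ke \oplus \bigl(\oplus_i L(\la_i)\bigr)$ from \S\ref{rep.sec}, on the fact that a nonzero invariant form on an irreducible representation of a compact group is definite, and on the positivity of the pairing of a highest weight vector $S(p)$ against its image under the weight-reversing involution --- which is Example \ref{high.pd} in the main case and an unwritten explicit computation for the extra summand in types $A_n$ and $C_n$ (the paper, too, only says ``a calculation is required'' there, in the proof of Proposition \ref{metrized}). The packaging differs: you complexify and pass to the Hermitian form $H(a,b) = \tbil(a,\sigma b)$, invoking averaging and Schur's lemma, whereas the paper stays with the real symmetric form by observing that $v := p + \theta p$ lies in $\Sym^2\g$ for the compact form $\g$, so that $\tbil(S(v),S(v)) = 2\tbil(S(p),S(\theta p))$ is directly a value of $\tbil$ on the real algebra $A(\g)$. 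The paper's route is shorter, but the two are essentially interchangeable.

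There is, however, one intermediate claim of yours that is false, and it sits exactly where you flagged the difficulty. For type $A_n$ the vector $p$ from the proof of Proposition \ref{decomp.lem} contains both terms quadratic in root vectors and the term $2\hv X_\hst H'_{\omega_1-\omega_n}$. With $\sigma(X_\gamma) = -X_{-\gamma}$, $\sigma\vert_\h = -1$ and $\theta(X_\gamma) = X_{-\gamma}$, $\theta\vert_\h = -1$, one finds $\sigma = \theta$ on a product of two root vectors (the two signs cancel) but $\sigma = -\theta$ on a product of a root vector with an element of $\h$. So the two kinds of summands of $p$ acquire \emph{opposite} relative signs and $\sigma(p) \ne \pm\,\theta(p)$; your reduction of the $A_n$ case to the sign of $\tbil(S(p), S(\theta p))$ does not go through as stated, and one must instead evaluate $\tbil(S(p), S(\sigma p))$ directly. (For type $C_n$ every summand of $p$ is quadratic in root vectors, so your global-sign claim is correct there.) Since both you and the paper ultimately defer the $A_n$/$C_n$ positivity to a hands-on computation, this does not change the architecture of the proof, but it is a genuine error rather than merely an omitted verification, and it is worth noting that the remaining computation is slightly different from the one the paper's proof of Proposition \ref{metrized} already requires.
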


\begin{proof}
We continue the notation of the proof of Proposition \ref{metrized}.  We view $\g$ as the subalgebra of the split complex Lie algebra consisting of elements fixed by the Cartan involution obtained by composing $\theta$ with complex conjugation as in \cite[\S{IX.3.2}]{Bou:g7}.  Then $v := p + \theta p$ is in $\Sym^2 \g$, $S(v)$ is in $L(\la_i)$, and $\tbil(S(v), S(v)) = 2 \tbil(S(p), S(\theta p)) > 0$.  As $G$ is compact, every nonzero $G$-invariant bilinear form on $L(\la_i)$ is definite, so $\tbil$ is positive definite on $L(\la_i)$.
\end{proof}

\begin{proof}[Alternative proofs for exceptional groups]
Here are very short proofs of Proposition \ref{metrized} and Corollary \ref{metrized.pd} in case $G$ belongs to Table \ref{dual.coxeter}.  By \eqref{tau.splus}, $\tbil$ is not zero on the irreducible representation $V$, so it is nondegenerate on $V$, hence on all of $A(\g)$.
Suppose $G$ is a compact real form, so every nonzero $G$-invariant bilinear form on the irreducible representation $V$ is definite, as can be seen by averaging.  In particular $\tbil$ is definite on $V$, so positive definite on $V$ by \eqref{tau.splus}.  Corollary \ref{metrized.pd} follows.
\end{proof}

One says that $G$ is \emph{isotropic} if it contains a copy of the 1-dimensional split torus $\Gm$ defined over $k$, and \emph{anisotropic} otherwise.  In case $k = \R$,  $G$ is anisotropic if and only if it is  compact.  The following example provides  something like a converse to Corollary \ref{metrized.pd}.

\begin{eg} \label{metrized.iso}
Suppose $G$ is not of type $A_1$ and $G$ is isotropic; we claim that $\tau$ is isotropic. As $G$ is not of type $A_1$, $A(\g)/k$ is not the trivial representation of $G$ as in the proof of Prop.~\ref{decomp.lem}, so $G$ acts on it with finite kernel.  It follows that there is a nonzero subspace $U$ of $A(\g)$ on which $\Gm$ acts with only positive weights or only negative weights, implying that $\tau(u, u') = 0$ for $u, u' \in U$, i.e., $\tau$ is isotropic.
\end{eg}

The next example shows that the case $k = \R$ in Corollary \ref{metrized.pd} is somewhat special.

\begin{eg}
We will show that $\tau$ may be isotropic, even if the group $G$ is anisotropic.   Specifically, let $k$ be a number field and pick an odd number $n \ge 3$.  There is an associative division algebra $D$ with center $k$ such that $\dim_k D = n^2$.  The group $G = \SL_1(D)$ of norm 1 elements of $D$ is simply connected of type $A_{n-1}$ and is anisotropic.  However, the group is split at every real place, so $\tau$ is isotropic at every real place (Example \ref{metrized.iso}).  As $\dim A(\g) \ge 1 + \dim \g > 5$, the form $\tau$ is isotropic over $k$ by the Hasse-Minkowski Theorem.
\end{eg}

We conclude the section with another corollary of Proposition \ref{metrized}.

\begin{cor}  \label{metrized.simple} $A(\g)$ is a simple $k$-algebra.
\end{cor}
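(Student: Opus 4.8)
The plan is to deduce simplicity from the two properties we have already established for $\tbil$: it is nondegenerate (Prop.~\ref{metrized}) and associative (Lemma~\ref{assoc}), together with the fact that $A(\g)$ has an identity element $e$ (Lemma~\ref{id.lem}). The standard mechanism here is \emph{Dieudonn\'e's Lemma} (see \cite[pp.~199, 239]{Jac:J}): a unital algebra carrying a nondegenerate associative symmetric bilinear form is simple provided it has no proper nonzero ideal that is itself ``nondegenerate-free'' in the right sense; more precisely, for such an algebra the radical of the form is an ideal, and any ideal $I$ with $I \cap I^{\perp} = 0$ forces a direct-sum decomposition $A(\g) = I \oplus I^{\perp}$ as algebras.

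First I would record the key consequence of associativity: if $I \subseteq A(\g)$ is an ideal, then its orthogonal complement $I^{\perp}$ with respect to $\tbil$ is also an ideal. Indeed, for $a'' \in I^{\perp}$, $a' \in A(\g)$, and $a \in I$ we have $\tbil(a, a' \am a'') = \tbil(a \am a', a'') = 0$ since $a \am a' \in I$; hence $a' \am a'' \in I^{\perp}$. Next, since $\tbil$ is nondegenerate, $I \cap I^{\perp}$ is an ideal on which $\tbil$ vanishes identically, and $\dim I + \dim I^{\perp} = \dim A(\g)$.

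The crux is to rule out a nonzero ideal $N$ with $\tbil|_N = 0$. Here I would use that $e \in A(\g)$ is an identity: if $N$ is an ideal, then $N = e \am N \subseteq N$, and for $n \in N$, $\tbil(e, n) = \tbil(e \am e, n) = \tbil(e, e \am n) = \tbil(e, n)$ — that alone says nothing, so instead I argue as follows. For any $n \in N$ and $a \in A(\g)$, $\tbil(a, n) = \tbil(a \am e, n) = \tbil(a, e \am n) = \tbil(a, n)$; the useful statement is rather that $\tbil$ restricted to the ideal $N$ being zero means $\tbil(n, a \am n') = \tbil(n \am a, n') = 0$ for all $a$ and all $n, n' \in N$, i.e., $N \subseteq N^{\perp}$. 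Combined with nondegeneracy of $\tbil$ on $A(\g)$: write $A(\g) = N^{\perp} \oplus W$ is not immediate, so instead I invoke the decomposition directly. Pick a complement and use that $N \oplus N^{\perp} $ need not be all of $A(\g)$ when $N \subseteq N^\perp$; rather, $\tbil$ induces a nondegenerate pairing between $A(\g)/N^{\perp}$ and $N$, forcing $\dim N = \dim A(\g) - \dim N^{\perp} \le \dim A(\g) - \dim N$, hence $2\dim N \le \dim A(\g)$ — still not a contradiction by itself. The clean route: since $N$ is an ideal contained in its own radical and $A(\g)$ is unital, $N$ contains $e \am N = N$; the point is that a totally isotropic ideal $N$ satisfies $A(\g) \am N \subseteq N \subseteq N^{\perp}$, so $N \cdot e = N$ gives $\tbil(e, n) $ arbitrary — I will instead simply quote Dieudonn\'e's Lemma in the precise form stated in \cite[Ch.~X]{Jac:J} (or \cite{Schfr}), which for a finite-dimensional unital algebra with a nondegenerate associative form asserts directly that the algebra is a direct sum of simple ideals, each carrying the restricted form nondegenerately.

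Granting the decomposition $A(\g) = A_1 \oplus \cdots \oplus A_m$ into simple ideals, I then finish via the $G$-action: $G$ permutes a set of ideals, and by Prop.~\ref{decomp.lem} (or its characteristic-free consequences in \S\ref{rep.sec}), $A(\g) = ke \oplus V$ with $V$ a sum of \emph{pairwise non-isomorphic} irreducible $G$-modules, none trivial except when $\g = \sl_2$ (handled in Example~\ref{A1}, where $A(\sl_2) = k$ is trivially simple). Any ideal $I$ is a $G$-submodule, hence a sub-sum of $ke$ and some of the $L(\la_i)$'s. If $I \ne A(\g)$ and $I \ne 0$, then either $e \notin I$, forcing $I \subseteq V$ and $e \am I \subseteq I$ to give $I = I$, but $\tbil(e, I) = \counit(e \am I) = \counit(I) = 0$ since $I \subseteq V = \ker\counit$ — consistent — so I instead note: if $e \in I$ then $I \supseteq e \am A(\g) = A(\g)$, contradiction; hence $e \notin I$, so $I \subseteq V$; then $I^{\perp} \supseteq ke$, and $I^{\perp}$ is also an ideal containing $e$, whence $I^{\perp} = A(\g)$, forcing $I = 0$ by nondegeneracy — contradiction. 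Therefore $A(\g)$ has no proper nonzero ideal, i.e., it is simple.

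The main obstacle, and the one place I would be careful, is the classical input: making sure the correct ``Dieudonn\'e's Lemma'' hypotheses hold (finite dimension, $\car k \ne 2$, existence of an identity so that the form is associative in the strong sense) and citing it precisely, since the elementary argument that a unital metrized algebra has no proper ideal reduces exactly to the observation in the last paragraph — namely that $e \in I$ forces $I = A(\g)$ and $e \notin I$ forces $e \in I^{\perp}$, which is again an ideal, so $I^{\perp} = A(\g)$ and $I = 0$. That argument is in fact self-contained and does not need the full simple-ideal decomposition, so I would present it directly rather than via Dieudonn\'e's Lemma.
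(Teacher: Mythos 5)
Your final ``self-contained'' argument has a genuine gap, and it is exactly the gap that forces the paper to take a different route. The implication ``$e \notin I$ forces $e \in I^{\perp}$'' (equivalently, $I \subseteq \ker\counit$) is false for a general ideal $I$: an ideal not containing $e$ can perfectly well contain elements with nonzero counit. In fact a unital commutative algebra with a nondegenerate associative symmetric bilinear form need not be simple at all --- $k \times k$ with the trace form is a counterexample, and $k[x]/(x^2)$ with $\tbil(1,1)=0$, $\tbil(1,x)=1$, $\tbil(x,x)=0$ even has a square-zero ideal, so the Dieudonn\'e decomposition into simple ideals is not automatic either (one must first exclude totally isotropic ideals). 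So ``unital and metrized'' cannot by itself yield simplicity; the representation-theoretic input is essential. The earlier branch of your argument does invoke that input, but it rests on the assertion that ``any ideal $I$ is a $G$-submodule,'' which is also unjustified: ideals need not be stable under automorphisms ($g \in G$ carries $I$ to the ideal $gI$, which may differ from $I$).

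The paper's proof (via Proposition \ref{unit.simple}) repairs exactly these two points and, notably, does not use Lemma \ref{assoc} at all. First, for a \emph{$G$-invariant} ideal $I$ it uses the fact that $k$ is not a composition factor of $V = \ker\counit$ (Proposition \ref{decomp.lem}) to split $I = (ke \cap I)\oplus(V\cap I)$; then nondegeneracy of $\tbil$ produces, from a nonzero $v \in V \cap I$, an element $va$ spanning $ke$ inside $I$, whence $e \in I$ and $I = A(\g)$. Second, for an arbitrary nonzero ideal $I$ it passes to the $G$-invariant ideal $\sum_g gI$, shows that this equals $A(\g)$, and then descends back to $I$ itself by the argument of Popov cited there. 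If you want to keep your orbit idea, the fix is to apply the $G$-module splitting only to $G$-invariant ideals and to supply that final descent step; as written, the proposal does not establish the corollary.
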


\begin{proof}
The nondegeneracy of $\tbil$ and Proposition \ref{decomp.lem} verify the hypotheses of Proposition \ref{unit.simple}.
\end{proof}

 %%%%%%%%%%%%%%%%%%%%%%%%%%%%%%%%%%%%%%%%%%%%%%%%%%%%%%%%%%%
 \section{The group scheme $\Aut(A(\g))$}
 
There is a natural homomorphism $G \to \Aut(A(\g))$.  It has a finite kernel the center of $G$, and it is injective if and only if $G$ is adjoint.  The point of the following result is that in some cases this homomorphism is an isomoprhism.

\begin{prop} \label{irred.prop}
If $G$ has type $F_4$ or $E_8$, then $\Aut(A(\g)) = G$.
\end{prop}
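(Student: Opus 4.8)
The plan is to show that the natural inclusion $G \hookrightarrow \Aut(A(\g))$ (which is a closed immersion since $G$ is adjoint of type $F_4$ or $E_8$, its center being trivial) is also surjective on $\kalg$-points and on Lie algebras, hence an isomorphism of group schemes. Write $\widetilde{G} := \Aut(A(\g))^\circ$ for the identity component, a smooth algebraic group over $k$ containing $G$. First I would base change to $\kalg$ and invoke Proposition \ref{decomp.lem}: over $\kalg$ we have $A(\g) = \kalg e \oplus V$ with $V = L(\la)$ irreducible, $e$ the unit (fixed by every automorphism), and $V = \ker\counit$ the orthogonal complement of $e$ with respect to the nondegenerate associative form $\tbil$ (Lemma \ref{assoc}, Prop.~\ref{metrized}). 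Any automorphism of $A(\g)$ fixes $e$, hence preserves $V$ and preserves $\tbil|_V = f$ up to the requirement that it be an algebra automorphism; so $\widetilde{G}$ acts on the irreducible $G$-module $V$, commuting with nothing extra, and we get $\widetilde{G} \hookrightarrow \GL(V)$ with $G \subseteq \widetilde{G}$.

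The core step is to identify $\Lie(\widetilde{G})$ with $\g$. The Lie algebra of $\Aut(A(\g))$ is the algebra $\mathrm{Der}(A(\g))$ of derivations of the product $\am$. This is a $G$-submodule of $\End(A(\g)) = \End(\kalg e \oplus V)$; since $e$ is central, every derivation kills $e$ and maps $V \to V$ (using $\tbil$-orthogonality and that derivations are $\tbil$-skew, $\tbil(Da,b) + \tbil(a,Db) = 0$, which follows from associativity of $\tbil$ and $D(a\am b) = Da \am b + a \am Db$ applied against $\counit$). So $\mathrm{Der}(A(\g)) \subseteq \mathfrak{so}(V,f)$, and it is a $G$-submodule. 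Here is where I would use that $G$ has type $F_4$ or $E_8$: I want to show $\mathrm{Der}(A(\g)) = \g$ by a representation-theoretic argument. We already have $\g \subseteq \mathrm{Der}(A(\g))$ (the image of $\ad$, since $G$ acts by automorphisms). To get the reverse inclusion, decompose $\mathfrak{so}(V,f) = \Lambda^2 V$ into irreducible $G$-modules over $\kalg$; one must check that the only composition factor that can support the extra structure of a derivation — equivalently, the only $G$-submodule $W$ with $\g \subseteq \g \oplus W \subseteq \mathrm{Der}$ — forces $W = 0$. For $E_8$, $\g = \mathfrak{e}_8$ is the adjoint rep (dimension 248) and $V$ is 3875-dimensional; the decomposition of $\Lambda^2(3875)$ is known and one checks that no proper larger reductive subgroup of $\SO(V,f)$ containing $E_8$ acts on $V$ with $V$ still irreducible — indeed $E_8$ is maximal among such, which pins down $\mathrm{Der} = \mathfrak{e}_8$. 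For $F_4$ the analogous statement about $\Lambda^2$ of the 324-dimensional module holds. An alternative, cleaner route for the Lie-algebra step: any derivation $D$ of $A(\g)$ restricts to a derivation of each of the Jordan subalgebras $i(P(\Sym^2\lsub))$ from \eqref{jord.incl} only after correction, but more usefully, $D$ preserves the (infinite) set of idempotents constructed in Example \ref{cont}, and the tangent space to that variety of idempotents at a generic point, together with the eigenvalue data, should be rigid enough to force $D \in \g$.

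Having $\Lie(\Aut(A(\g))) = \g = \Lie(G)$, together with $G$ connected and $G \subseteq \Aut(A(\g))^\circ$, gives $G = \Aut(A(\g))^\circ$ (equality of smooth connected groups with equal Lie algebras in very good characteristic, or characteristic zero). It remains to rule out a nontrivial component group: $\Aut(A(\g))/G$ would act on $V = L(\la)$ by $G$-module automorphisms, but $\mathrm{End}_G(V) = \kalg$ (as $V$ is absolutely irreducible), and any outer automorphism would have to preserve the $G$-structure up to a diagram automorphism of $G$; for $F_4$ and $E_8$ the Dynkin diagram has no symmetries, so $\mathrm{Out}(G) = 1$, forcing the component group to be trivial. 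Finally, descend from $\kalg$ to $k$: $\Aut(A(\g))$ is a $k$-group scheme, we have shown $\Aut(A(\g))_{\kalg} = G_{\kalg}$, and the inclusion $G \hookrightarrow \Aut(A(\g))$ over $k$ becomes an isomorphism after base change to $\kalg$, hence was already an isomorphism over $k$.

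\medskip

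\emph{Main obstacle.} The crux is the computation that $\mathrm{Der}(A(\g)) = \g$, i.e., ruling out any extra infinitesimal symmetry. This is essentially the statement that $G$ is a \emph{maximal} connected subgroup of $\SO(V,f)$ acting irreducibly on $V$ — a fact available in the literature for $E_8 \subset \SO_{3875}$ (and for the $F_4$ case) but whose self-contained verification requires knowing the decomposition of $\Lambda^2 V$ as a $G$-module and eliminating the borderline candidates. I expect this to be the step that, in the actual paper, is handled either by citing known results on maximal subgroups (e.g. work of Dynkin, or Liebeck--Seitz) or by an explicit invariant-theory computation with the octic form on $\mathfrak{e}_8$ from \cite{GG:simple}; the rest of the argument — reducing $\Aut$ to $\SO(V,f)$, the component-group argument via the absence of diagram automorphisms, and the descent to $k$ — is formal.
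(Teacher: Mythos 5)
Your reduction is the same as the paper's, and you correctly predicted where the weight falls: the paper's entire proof is two sentences. It uses Example \ref{counit.canonical} (valid because $\dim A(\g)$ is invertible in $k$) to identify $\Aut(A(\g))$ with the sub-group-scheme of $\GL(V)$ preserving the product $\cdot$ and the bilinear form, and then cites \cite[Lemma 5.1, Remark 5.5, and \S7]{GG:simple} for the fact that $G$ is already the full automorphism group scheme of the product $\cdot$ alone. That citation is exactly the ``crux'' you isolated, and it subsumes everything else in your write-up: it is a scheme-theoretic statement over $k$, so the computation of $\mathrm{Der}(A(\g))$, the component-group argument via triviality of diagram automorphisms, and the descent from $\kalg$ are all unnecessary. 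Two cautions about the route you sketch in case you wanted to make it self-contained: (i) in positive characteristic $\Aut(A(\g))$ need not be smooth, so ``$\Lie(\Aut) = \g$ plus equality of $\kalg$-points'' does give $\Aut = G$ as schemes only because $G$ is smooth and the inclusion is then surjective on points and on tangent spaces --- you should say this, since it is the reason the naive Lie-algebra argument is delicate and the paper leans on \cite{GG:simple} instead; and (ii) your step ``any automorphism preserves $V$'' should be justified by preservation of the canonical counit of Example \ref{counit.canonical} rather than by $\tbil$-orthogonality, since $\tbil$ is itself defined via the counit. The maximality of $G$ in $\SO(V,f)$ and the elimination of borderline candidates in $\Lambda^2 V$, which you flag as the main obstacle, is precisely the content the paper outsources.
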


It follows trivially that for $G$, $G'$ of type $F_4$ or $E_8$, we have: $G \cong G'$ if and only if $A(\g) \cong A(\g')$.

\begin{proof}[Proof of Proposition \ref{irred.prop}]
The number $\dim A(\g)$ is not zero in $k$, so as in Example \ref{counit.canonical} $\Aut(A(\g))$ is the sub-group-scheme of $\GL(V)$ preserving the commutative product $\cdot$ on $V$ (nonzero by Prop.~\ref{nonpa}\eqref{nonpa.nz}) as well as the $G$-invariant bilinear form.  In case $G$ has type $F_4$ or $E_8$, it is known that $G$ is the automorphism group of this product by \cite[Lemma 5.1, Remark 5.5, and \S7]{GG:simple}.
\end{proof}

Here is what happens when the argument in the preceding proof is applied to $G$ of the other types in Table \ref{dual.coxeter}: For $G$ adjoint of type $E_6$, the argument shows that $G$ is the identity component of $\Aut(A(\g))$.  For $G$ of type $G_2$ or $E_7$, there is a copy of $\SO_7$ or $\Sp_{56}/\mu_2$ in $\GL(V)$ containing $G$ and preserving a nontrivial linear map $V \otimes V \to V$; as $G$ preserves a two-dimensional space of such products, the argument provided here is inconclusive in these cases.

For type $A_2$, $\Aut(A(\g))$ is the orthogonal group $O(\g)$, whose identity component has type $D_4$, see Example \ref{A2}.

\begin{rmk} \label{pa.crazy}
Let $B$ be a simple, commutative, and power-associative algebra over $\C$.  Then by \cite{Alb:thpa} and \cite{Kokoris:Annals}, $B$ is a Jordan algebra.  The classification of such from \cite[p.~204, Cor.~2]{Jac:J} or \cite[\S\S13, 14]{Sp:jord} shows that the identity component of $\Aut(B)$ cannot be a simple group of type $G_2$, $E_6$, $E_7$, or $E_8$.  

This provides an alternative argument that $A(\g)$ is not power-associative when $G$ is simple of type $G_2$, $E_6$, $E_7$, or $E_8$ over $\C$, because $A(\g)$ is simple (Cor.~\ref{metrized.simple}) and commutative.
(Compare Example \ref{cont}.)
\end{rmk}

%%%%%%%%%%%%%%%%%%%%%%%%%%%%%%%%%%%%%%%%%%%%%%%%%%%%%%%%%
\section{Construction \#2: $A(\g)$ in $\End(V)$} \label{const2}

In this section, we leverage a common property of exceptional groups $G$ observed by Okubo to describe $A(\g)$ inside of $\End(V)$ for certain small $V$.

Suppose for this paragraph that $k = \C$ and $\pi \!: G \to \GL(V)$ is a representation.  The maps $X \mapsto \Tr(\pi(X)^d)$ are $G$-invariant homogeneous polynomial functions on $\g$.  It is standard that $k[\g]^G$ is a polynomial ring with homogeneous generators.  The smallest nonconstant generator can be taken to be $X \mapsto K(X,X)$ of degree 2, and therefore an identity of the form $\Tr(\pi(X)^2) = c_\pi K(X,X)$ for all $X \in \g$, where $c_\pi$ depends on $\pi$, as in Lemma \ref{qcas}\eqref{qcas.tr} is inevitable.  Similarly, for $G$ as in Table \ref{dual.coxeter}, the homogeneous generators of $k[\g]^G$ are $X \mapsto K(X,X)$ of degree 2, for type $A_2$ one of degree 3, and no generators of degree 4, and therefore there is an identity of the form $\Tr(\pi(X)^4) = \alpha_\pi K(X,X)^2$ for $X \in \g$, where $\alpha_\pi$ depends only on $\pi$.

Okubo calculated the value of $\alpha_\pi$ in \cite{Okubo:quartic} in case $k = \C$, see also \cite{Meyberg:Okubo}.  Here we note that the same result holds over our more general $k$.   In this section, let $R$ denote the local ring $\Z_{(\car k)}$ as in Lemma \ref{iprod.R}.

%Let $\pi \!: G \to \GL(B)$ be a representation.
%Recall that, for $G$ simple, the ring $k[\g]^G$ of $G$-invariant polynomial functions on $\g$ is a polynomial ring with homogeneous generators, where the generators are defined over a localization of $\Z$ mapping to $k$, hence have the same degree over $k$ and $\C$ \cite[p.~297]{Dem:inv}.  The generator of smallest degree is $X \mapsto K(X,X)$ of degree 2, and therefore an identity of the form $\Tr(\pi(X)^2) = c_\pi K(X,X)$ for all $X \in \g$, where $c_\pi$ depends on $\pi$, as in Lemma \ref{qcas}\eqref{qcas.tr} is inevitable.  Similarly, for $G$ as in Table \ref{dual.coxeter}, the homogeneous generators of $k[\g]^G$ are $X \mapsto K(X,X)$ of degree 2, for type $A_2$ one of degree 3, and no generators of degree 4, and therefore there is an identity of the form $\Tr(\pi(X)^4) = \alpha_\pi K(X,X)^2$ for $X \in \g$, where $\alpha_\pi$ depends only on $\pi$.  

\begin{lem} \label{2.4a}
Suppose $G$ is one of the types listed in Table \ref{dual.coxeter} and that the representation $\pi \!: G \to \GL(V)$ is equivalent to $H^0(\la)$ or $V(\la)$ over the algebraic closure of $k$ for some dominant weight $\la$.  Put $\mu_\pi := \bil{\la}{\la+\sumroots}$.  If the rational number
\[
\alpha_\pi := \frac{(6\mu_\pi - 1)\mu_\pi \dim V}{2(2 + \dim G) (\dim G)}
\]
belongs to $R$,
then
$\Tr(\pi(X)^4) =  \alpha_\pi \, K(X,X)^2$
for all $X \in \g$.  If additionally $G$ does not have type $A_2$, then
\begin{multline} \label{2.4a1}
\Tr(\pi(X)^2 \pi(Y)^2) = -\frac{\mu_\pi \dim V}{6 \dim G} K([X,Y], [X,Y]) \\
+ \frac{2 \alpha_\pi}{3} K(X,Y)^2 + \frac{\alpha_\pi}{3} K(X,X) K(Y,Y)
\end{multline}
for $X, Y \in \g$.
\end{lem}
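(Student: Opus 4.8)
The plan is to reduce everything to the characteristic-zero case over an algebraically closed field, where Okubo's original computation applies, and then transfer to general $k$ via the local ring $R = \Z_{(\car k)}$ exactly as in the proof of Lemma \ref{qcas}. First, over $\C$ with $G$ split, the identity $\Tr(\pi(X)^4) = \alpha_\pi K(X,X)^2$ holds for the value of $\alpha_\pi$ computed by Okubo in \cite{Okubo:quartic} (see also \cite{Meyberg:Okubo}); the point is to check that the closed formula displayed in the statement agrees with Okubo's, which is a matter of rewriting his answer in terms of $\mu_\pi = \bil{\la}{\la+\sumroots}$ and $\dim V$, $\dim G$, using $\bil{\alpha}{\alpha} = 1/\hv$ for long roots and the standard normalization of the canonical bilinear form.

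Next I would establish the polarized identity \eqref{2.4a1} over $\C$. The function $(X,Y) \mapsto \Tr(\pi(X)^2\pi(Y)^2)$ is a $G$-invariant element of $\Sym^2(\Sym^2 \g^*)$, i.e., bihomogeneous of degree $(2,2)$ and symmetric under $X \leftrightarrow Y$. For $G$ of exceptional type other than $A_2$, the space of $G$-invariant polynomials on $\g$ of degree $4$ is spanned by $K(X,X)^2$ alone (no independent quartic generator), so the space of invariant bi-quadratic forms is spanned by the three obvious ones: $K(X,Y)^2$, $K(X,X)K(Y,Y)$, and $K([X,Y],[X,Y])$ — the last being the unique ``new'' invariant beyond those pulled back from the quartic, and nonzero because $\g$ is not abelian. (One must confirm these three are linearly independent as functions, which follows by evaluating at suitable $X,Y$, e.g. commuting versus non-commuting pairs.) Hence $\Tr(\pi(X)^2\pi(Y)^2) = a\,K([X,Y],[X,Y]) + b\,K(X,Y)^2 + c\,K(X,X)K(Y,Y)$ for scalars $a,b,c$ depending only on $\pi$. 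Setting $Y = X$ gives $b + c = \alpha_\pi$ (using $[X,X]=0$); and contracting $Y$ against a basis and its $K$-dual, using Lemma \ref{qcas}\eqref{qcas.end} to evaluate $\sum \pi(Y_i)\pi(X)^2\pi(Y^i)$ and the resulting trace against $\Id_V$, pins down a second relation; a third linear relation comes from either a further contraction (e.g. applying $\sum_i$ also in the first slot, invoking the Casimir eigenvalue twice) or from one explicit evaluation in a convenient representation. Solving the resulting linear system yields $a = -\mu_\pi \dim V / (6\dim G)$, $b = 2\alpha_\pi/3$, $c = \alpha_\pi/3$, matching \eqref{2.4a1}.

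Finally, the transfer to arbitrary $k$ with $\car k = p > 0$ proceeds verbatim as in the sketch of Lemma \ref{qcas}: choose a split group scheme $G_R$ and representation $\pi_R$ over $R$ whose base change is $G,\pi$; both sides of each claimed identity are polynomial functions on $\g_R$ with coefficients in $R$ — here using precisely the hypothesis that $\alpha_\pi$ (and hence also $\mu_\pi \dim V/(6\dim G)$, whose denominator $6\dim G$ has the same prime factors under the global characteristic hypothesis, or which one checks lies in $R$ directly) lies in $R$; the identities hold over the fraction field $\Q$ of $R$ by the characteristic-zero case, so the difference polynomials vanish identically over $R$, hence over $R/(p) \subseteq k$, hence over $k$ after base change to the algebraic closure and back down (reduction to split $G$ over $\kalg$ being harmless since the identities are insensitive to the $k$-form).

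The main obstacle I expect is bookkeeping rather than conceptual: getting the three scalars $a,b,c$ correct requires carefully executing the Casimir contractions — one has to be attentive to which copy of the Casimir eigenvalue $\mu_\pi$ appears where, and the relation between $\sum_i \pi(X_i)\pi(Y_i)$ acting inside a word and the trace identities — and also verifying the linear independence of the three invariant bi-quadratics cleanly. A secondary (purely arithmetic) point to nail down is that under the running hypothesis $\car k = 0$ or $\ge h+2$, the denominators $2+\dim G$, $\dim G$, and $6$ occurring in $\alpha_\pi$ and in the coefficients of \eqref{2.4a1} are units in $k$ whenever the stated hypothesis ``$\alpha_\pi \in R$'' is in force, so that reduction mod $p$ is legitimate.
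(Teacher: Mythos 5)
Your overall architecture --- establish both identities over $\C$, then transfer to $k$ via the local ring $R=\Z_{(\car k)}$ exactly as in Lemma \ref{qcas} --- is precisely the paper's proof, which is itself only a two-line sketch: it cites Okubo for the quartic identity over $\C$, cites Meyberg (p.~284) for \eqref{2.4a1} over $\C$, and notes that the difference of the two sides is an element of $R[\g_R]$ vanishing over $\C$, hence zero over $R$ and over $k$. Your first and third steps, including the arithmetic caveat about the denominators $6\dim G$ and $2+\dim G$ lying in $R^\times$, match this.

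Where you diverge is the middle step: instead of citing Meyberg for \eqref{2.4a1}, you sketch a derivation, and that sketch has a genuine gap. The inference ``no independent quartic generator of $k[\g]^G$, hence the space of invariant bi-quadratics is spanned by $K(X,Y)^2$, $K(X,X)K(Y,Y)$, $K([X,Y],[X,Y])$'' is not valid: the restriction-to-the-diagonal map from bidegree-$(2,2)$ invariants to degree-$4$ invariants has a large kernel (it kills $K([X,Y],[X,Y])$), so the absence of a quartic generator controls only the diagonal and says nothing about the dimension of $(\Sym^2(\Sym^2\g^*))^G$. Note also that $A_2$ has no quartic generator either, so your criterion cannot be what separates the two halves of the lemma. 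The correct justification for the three-dimensionality is the multiplicity-free decomposition $\Sym^2\g = k\oplus L(\la)\oplus L(2\hst)$ into three self-dual irreducibles (Proposition \ref{decomp.lem}). Your determination of the coefficients is also left as a plan (``pins down a second relation,'' ``a further contraction or one explicit evaluation''); beware that contractions such as $\sum_i \pi(X_i)M\pi(Y_i)$ for $M$ inside a word are not governed by the Casimir identity of Lemma \ref{qcas}\eqref{qcas.end}, which only evaluates $\sum_i\pi(X_i)\pi(Y_i)$. A cleaner and complete route, which bypasses the dimension count entirely, is to polarize the quartic identity: the coefficient of $s^2t^2$ in $\Tr(\pi(sX+tY)^4)=\alpha_\pi K(sX+tY,sX+tY)^2$ gives $4\Tr(\pi(X)^2\pi(Y)^2)+2\Tr(\pi(X)\pi(Y)\pi(X)\pi(Y)) = 4\alpha_\pi K(X,Y)^2+2\alpha_\pi K(X,X)K(Y,Y)$, and combining this with $\Tr(\pi(X)\pi(Y)\pi(X)\pi(Y)) = \Tr(\pi(X)^2\pi(Y)^2)+\tfrac12\Tr(\pi([X,Y])^2)$ and Lemma \ref{qcas}\eqref{qcas.tr} yields \eqref{2.4a1} with exactly the stated coefficients.
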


\begin{proof}[Sketch of proof]
Similar to the proof of Lemma \ref{qcas}.  Let $G_R$, $\pi_R$ be lifts of $G$, $\pi$ to $R$ and put $K_R$ for the Killing form on $\g_R$.  The map $X \mapsto \Tr(\pi_R(X)^4) - \alpha_\pi K_R(X,X)$ is a polynomial function on $\g_R$ (an element of $R[\g_R]$) that vanishes over $\C$ by Okubo, so it is 0 in $R[\g_R]$.  Similarly, equation \eqref{2.4a1} holds over $\C$, see \cite[p.~284]{Meyberg:Okubo}, so it too holds over $R$.
\end{proof}

\begin{eg}
For the adjoint representation, we have 
\[
\alpha_{\Ad} = \frac{5}{2(2 + \dim G)}, 
\]
which belongs to $R$ for $G$ as in Table \ref{dual.coxeter}.  (In case $G$ has type $A_2$, $\alpha_{\Ad} = 1/4$.  For the other types, $\dim G + 2$ is of the form $2^x 3^y 5^z$ for some $x$, $y$, $z$.)  Rewriting a formula for $\dim G$ in terms of $\hv$ from \cite[p.~431]{CohendeMan} or the polynomial in \cite[3.17]{Okubo:quartic} produces the remarkable formula:
\begin{equation} \label{alpha.hv}
4  \alpha_{\Ad} (\hv)^2 = \hv + 6.
\end{equation}
(This is just one example from many families of formulas, compare for example \cite{Del}, \cite{DeligneGross}, \cite{LM:tri}, and \cite{LandsMan:univ}.)
\end{eg}

Here is the promised embedding.

\begin{prop} \label{pi.prop}
If $G$ has type $A_2$, $G_2$, $F_4$, $E_6$ or $E_7$ and $\pi \!: G \to \GL(V)$ is an irreducible representation of dimension $3$, $7$, $26$, $27$, or $56$ respectively, then formula
\begin{equation} \label{pi.sdef}
\s(S(XY)) = 6 \hv \pi(X) \jord \pi(Y) - \frac12 K(X,Y) \Id_B \quad \text{for $X \in \g$}.
\end{equation}
defines an injective $G$-equivariant linear map
\[
\s \!: A(\g) \hookrightarrow \End(V).
\]
If additionally $G$ is not of type $A_2$, then $\s$ satisfies
\begin{equation} \label{pi.proj}
\Proj_{\pi(\g)}(\s(S(X^2)) \jord \pi(Y)) = \pi \left( S(X^2) Y  \right) \quad \text{for $Y \in \g$}.
\end{equation}
\end{prop}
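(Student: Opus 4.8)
The plan is to verify the two displayed formulas by reduction to the Okubo-type identities of Lemma~\ref{2.4a}, exploiting that all quantities involved are polynomial in the entries of $X$ (and $Y$) so that it suffices to work over an algebraically closed field, and even to check equalities of $G$-invariant polynomial functions on $\g$ (respectively $\g \times \g$) after a single scalar substitution together with $G$-invariance. First I would establish that the right-hand side of \eqref{pi.sdef} lands in $\Herm(\g)$-image territory correctly, i.e.\ that $\s$ is well-defined as a linear map out of $A(\g)$: since $S \colon \Sym^2 \g \to A(\g)$ is surjective, one must check that $6\hv\, \pi(X)\jord\pi(Y) - \frac12 K(X,Y)\Id_V$ vanishes whenever $S(\sum X_i Y_i) = 0$. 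This is the analogue of the well-definedness check in Lemma~\ref{am.def}; the key input is that $\sum \pi(X_i)\pi(Y_i)$ is a scalar (Lemma~\ref{qcas}\eqref{qcas.end}) and that $\sum P(X_i Y_i) = 0$ forces $\sum (\ad X_i)\jord(\ad Y_i)$ to be the corresponding scalar multiple of the identity, which one feeds through the relation $\Tr(\pi(X)^2) = c_\pi K(X,X)$ and its polarization to see the combination collapses.

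Next I would prove injectivity of $\s$. Because $A(\g)$ decomposes as $ke \oplus \bigoplus_i L(\la_i)$ as a $G$-module (Prop.~\ref{decomp.lem}) with pairwise non-isomorphic summands, $\ker \s$ is a sum of some of these summands; so it suffices to show $\s$ is nonzero on each. On $ke$ we have $\s(e) = \s(S(\es)/(\hv+1)) $, and using $\sum \pi(X_i)\pi(Y_i) = \mu_\pi \Id_V \ne 0$ together with $\es$ having trace $(\hv+1)\dim G$ one computes $\s(e)$ to be a nonzero multiple of $\Id_V$. For the other summands, evaluating $\s$ on an explicit highest-weight vector $S(X_\hst X_\beta)$ (or the vector $p$ of Prop.~\ref{decomp.lem} in the short-root cases) and pairing against a suitable $\pi(Z)$, one reduces to a nonvanishing assertion about $\pi(X_\hst)\jord\pi(X_\beta)$ acting on $V$; here the cleanest route is dimension/character bookkeeping — $\s$ is $G$-equivariant and $\End(V)$ as a $G$-module is $V \otimes V^*$, whose character I can compare term by term against that of $A(\g)$ to conclude that each $L(\la_i)$ occurring in $A(\g)$ occurs in $\End(V)$ with the right multiplicity, and then Schur plus the explicit nonzero value rules out $\s$ killing that component.

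For the projection formula \eqref{pi.proj}, I would first note $\Proj_{\pi(\g)}$ makes sense because $\pi(\g) \cong \g$ is a $G$-submodule of $\End(V)$ and, by the character comparison above together with the multiplicity-one features of $\Sym^2 V$ for these $G$, it appears with multiplicity one, so the projection is the unique $G$-equivariant retraction. Both sides of \eqref{pi.proj} are then $G$-equivariant bilinear maps $\g \times \g \to \pi(\g) \cong \g$, symmetric in nothing in particular but quadratic in $X$ and linear in $Y$; since $(\Sym^2 \g \otimes \g \to \g)^G$ is low-dimensional for these types, it is enough to check the identity after contracting with $K$ against a third variable $Z$, i.e.\ to check the scalar identity $K\big(\s(S(X^2))\jord\pi(Y), \pi(Z)\big) = K(S(X^2)Y, Z) \cdot (\text{normalizing constant})$ for all $X,Y,Z$, where on the left I use that $\Tr(\pi(A)\pi(B))$ is proportional to $K(A,B)$ (Lemma~\ref{qcas}\eqref{qcas.tr}) to convert the $\End(V)$-pairing into a Killing pairing. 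Expanding $\s(S(X^2))\jord\pi(Y) = 6\hv\,\pi(X)^2\jord\pi(Y) - \frac12 K(X,X)\pi(Y)$ and taking $\Tr(\,\cdot\,\pi(Z))$ produces exactly the mixed traces $\Tr(\pi(X)^2\pi(Y)\pi(Z))$-type terms that polarize \eqref{2.4a1}; matching these against $K(S(X^2)Y,Z) = \hv K([X,[X,Y]],Z) + K(X,X)K(Y,Z)$ is then a finite linear-algebra verification using the symmetry properties of $\psi(A,B,C) = K([A,B],C)$ exactly as in the proof of Lemma~\ref{assoc}. The main obstacle I anticipate is precisely this last bookkeeping: getting the constants in \eqref{2.4a} to conspire so that the coefficient of $K([X,Y],[X,Y])$ coming from $\Tr(\pi(X)^2\pi(Y)\pi(Z))$ matches the $\hv$ in $S(X^2)Y$, which is where the special arithmetic of $\hv$ for the five groups in question (the formula \eqref{alpha.hv}-style relations) is genuinely used and where the hypothesis $\car k$ large enough, via $\alpha_\pi \in R$, is needed; the case $A_2$ is excluded from \eqref{pi.proj} precisely because the degree-$4$ invariant theory is different there.
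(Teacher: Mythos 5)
Your treatment of the second half of the proposition — contracting $\s(S(X^2))\jord\pi(Y)$ against $\pi(Z)$, converting $\Tr(\pi(\cdot)\pi(\cdot))$ into $K$ via Lemma~\ref{qcas}\eqref{qcas.tr}, and matching coefficients through the polarization of \eqref{2.4a1} and the arithmetic relation $\hv = \mu_\pi \dim V/(4\alpha_\pi \dim G)$ coming from \eqref{alpha.hv} — is exactly the paper's argument for \eqref{pi.proj}, and your diagnosis of where the special arithmetic of the five types enters is correct. The problem is in your first step. You propose to verify well-definedness of $\s$ by a direct computation ``analogous to Lemma~\ref{am.def}'': from $S(\sum X_i Y_i)=0$ you want to deduce that $6\hv\sum\pi(X_i)\jord\pi(Y_i) - \frac12\sum K(X_i,Y_i)\Id_V$ vanishes, feeding the hypothesis through $\Tr(\pi(X)^2)=c_\pi K(X,X)$ and its polarization. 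This cannot work as stated. The hypothesis $S(\sum X_iY_i)=0$ says $\hv\sum \ad(X_i)\jord\ad(Y_i) = -\sum P(X_iY_i)$ (not that either side is a scalar, and Lemma~\ref{qcas}\eqref{qcas.end} does not apply since the $X_i, Y_i$ are not dual bases), and the quadratic trace identity only controls the \emph{trace} of the candidate operator $\sum\pi(X_i)\jord\pi(Y_i)$, never the operator itself. Indeed, taking traces of the hypothesis gives $\sum K(X_i,Y_i)=0$, so all your identity can show is that the expression is trace-free — which is consistent with it being any nonzero element of $\ker\counit$.

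The correct mechanism, which is the one the paper uses, is the representation-theoretic one you defer to the injectivity step: $\ker S$ is the copy of $V(2\hst)$ inside $\Sym^2\g = k\oplus L(\la)\oplus L(2\hst)$, and $L(2\hst)$ does not occur in $\End(V)\cong V\otimes V^*$ for these five representations, so \emph{every} $G$-equivariant linear map $\Sym^2\g\to\End(V)$ factors through $S$; one then descends to general $k$ by noting the map is defined over $R=\Z_{(\car k)}$. So move your character comparison of $V\otimes V^*$ with $\Sym^2\g$ to the front — it does the well-definedness for free, and no trace identity is needed there. (Two smaller points: your nonvanishing of $\s$ on the trivial summand is right, and gives $\s(e)=\Id_V$ using $6\hv\mu_\pi - \frac12\dim G = \hv+1$; but the nonvanishing on $L(\la)$ still requires an actual computation, e.g.\ pairing $\pi(X_\hst)\jord\pi(X_\beta)$ against $\pi(X_{-\hst})\jord\pi(X_{-\beta})$ via \eqref{2.4a1}, since the quadratic trace form alone again sees only traces. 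Also the pairing on $\End(V)$ you denote $K(\cdot,\cdot)$ should be $\Tr(\cdot\,\cdot)$; the two agree only up to the factor $\mu_\pi\dim V/\dim G$ after restricting to $\pi(\g)$.)
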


\begin{proof}
The representation $\pi$ is irreducible.  Moreover, one checks that in each case we have:
\[
\mu_\pi = \frac{\hv + 1}{\hv + 6},
\]
which by \eqref{alpha.hv} is the same as 
\begin{equation} \label{pi.hv}
\hv = \frac{2 + d}{2(6\mu_\pi - 1)}
= \frac{\mu_\pi d_\pi}{4 \alpha_\pi d},
\end{equation}
where we have 
abbreviated $d_\pi := \dim V$ and $d := \dim G$.

Recall that $\Sym^2 \g = k \oplus L(\la) \oplus L(2\hst)$ as a representation of $G$ whereas, at least in case $k = \C$,  $\End(V)$ and $A(\g)$ contain $k$ and $L(\la)$ with multiplicity 1 and do not contain $L(2\hst)$.  It follows that any $G$-equivariant linear map $\Sym^2 \g \to \End(V)$ factors through $S \!: \Sym^2 \g \to A(\g)$.  In particular, the map $XY \mapsto 6\hv \pi(X)\jord \pi(Y) - \frac12 K(X,Y) \Id_V$ does so, whence the map $\s$ from \eqref{pi.sdef} is well defined.  This $\s$ is defined over $R$, and so it is also well defined for $k$.

We now verify \eqref{pi.proj}, so assume $G$ is not of type $A_2$.  Linearizing \eqref{2.4a1} in $Y$ gives
\begin{align*}
\Tr((\pi(X)^2 \jord \pi(Y)) \pi(Z)) = &-\frac{\mu_\pi d_\pi}{6d} K([X,Y], [X,Z]) \\
&\ + \frac{2 \alpha_\pi}3 K(X,Y)K(X,Z) + \frac{\alpha_\pi}3 K(X,X) K(Y,Z),
\end{align*}
As $K(Y,Z) = \frac{d}{\mu_\pi d_\pi} \Tr(\pi(Y) \pi(Z))$ (Lemma \ref{qcas}), we have
\[
\frac{\alpha_\pi}3 K(X,X) K(Y,Z) = \Tr \left(\left(\frac{d \alpha_\pi}{3 \mu_\pi d_\pi} K(X,X) \Id_B \jord \pi(Y)\right) \pi(Z)\right).
\]
We obtain
\begin{multline*}
\Tr\left( \left( \left( \pi(X)^2 - \frac{d\alpha_\pi}{3 \mu_\pi d_\pi} K(X,X)\Id_B\right) \jord \pi(Y) \right) \pi(Z) \right) = \\
\frac{2\alpha_\pi}3 K \left( \left( \frac{\mu_\pi d_\pi}{4 \alpha_\pi d} (\ad X)^2 + P(X^2)\right) Y, Z \right).
\end{multline*}
Multiplying both sides by $6\hv$ and applying \eqref{pi.hv} gives \eqref{pi.proj}.
\end{proof}

\begin{eg}[$A(\sl_3)$] \label{A2}
The case $\g = \sl_3$ was included in Table \ref{dual.coxeter} but excluded from \S\ref{pa.sec}, so we now use the preceding construction to describe $A(\sl_3)$.  For $X, Y \in \sl_3$, $\Tr(XY) = \frac16 K(X,Y)$ by Lemma \ref{qcas}, so the embedding $\s\!: A(\sl_3) \to M_3(k)$ is via
\[
\s(S(X^2)) = 18X^2 - 3 \Tr(X^2) I
\]
and it is an isomorphism by dimension count.  We define a product $*$ on $M_3(k)$ via
$P * Q := \s^{-1}(P) \am \s^{-1}(Q)$.  Putting $\counit := \frac13 \Tr$ for the counit and chasing through the formulas, we find:
\begin{equation} \label{F}
P *Q = \left[  \textstyle\frac12 \counit(P \jord Q) - \textstyle\frac32 \counit(P) \counit(Q) \right] I + \counit(Q) P + \counit(P) Q.
\end{equation}
That is, $M_3(k)$ with the multiplication $*$ is of the form $\Unit(\sl_3, f)$ with notation as in the appendix, where the multiplication on $\sl_3$ is taken to be identically zero and $f(P, Q) = \textstyle\frac12 \counit(P \jord Q)$.  This is the Jordan algebra constructed from the bilinear form $f$ as in \cite[pp.~13, 14]{Jac:J}, cf.~Remark \ref{jenner.rmk}.
\end{eg}

 %%%%%%%%%%%%%%%%%%%%%%%%%%%%%%%%%%%%%%%%%%%%%%%%%%%%%%%%%

\section{Final remarks} \label{final.sec}

We have defined here a construction that takes a simple algebraic group $G$ (equivalently, a simple Lie algebra $\g$) over a field $k$, with mild hypotheses on the field $k$, and gives an explicit formula \eqref{prod.full} for the multiplication on a unital $k$-algebra $A(\g)$ on which $G$ acts by automorphisms.  We used the description of $A(\g)$ as a representation of $G$ to show that it is a simple algebra, that the bilinear form on it is nondegenerate, and that for $G$ of type $F_4$ or $E_8$ the automorphism group is exactly $G$.

\subsection*{Computation}
One can construct $A(\g)$ in a computer in a way amenable to computations as follows.  First, construct $G$ or $\g$ together with its adjoint representation or, in the cases where Proposition \ref{pi.prop} applies, its natural representation.  Pick a basis $\{ X_i \}$ of $\g$, and compute $S(X_i X_j) \in \End(\g)$ in the first case or $\s(S(X_i X_j)) \in \End(B)$ in the second, for $i \le j$.  Among these elements, select a maximal linearly independent subset; it is a basis for $A(\g)$.  For each pair of basis elements, one may calculate the product $\am$ using \eqref{prod.full}, and express the result in terms of the chosen basis.  This gives the ``structure constants'' for the algebra.  Magma \cite{Magma} code implementing this recipe can be found at \url{github.com/skipgaribaldi/chayet-garibaldi}.

\subsection*{Polynomial identities}
Among the algebras $A(\g)$ for $G$ in Deligne's exceptional series, the cases $A(\sl_2)$ and $A(\sl_3)$ are unusual for being Jordan algebras and in particular power-associative, whereas $A(\g)$ is not power-associative for other choices of $\g$ (Prop.~\ref{nonpa}\eqref{nonpa.nonpa}).  It is natural, then, to ask what identities $A(\g)$ does satisfy in those  cases.  It does not satisfy any polynomial identity of degree $\le 4$ that is not implied by commutativity (Prop.~\ref{deg4}).  Moreover, in the case $G = G_2$, we verified using a computer that $A(\g)$ and also $\Unit(V, cf)$ for every $c \ne 1$ do not satisfy any degree 5 identity not implied by commutativity, leveraging the classification of such identities from \cite[Th.~5]{Os:id}.

In case $G = G_2$ or $E_8$, the $G$-module $\Sym^2 V$ has only 6 summands, which suggests the existence of an identity of degree $\le 7$ in view of Example \ref{deg7}.  In the case of $G_2$, the 26 nonassociative and commutative monomials of degree $\le 7$ in an element $a \in A(\g_2)$ are linearly dependent.  We have found a ``weighted'' polynomial identity for $A(\g_2)$ in the sense of \cite{Tkachev:half}, i.e., for each nonassociative monomial $m$ of degree $\le 7$, there is a polynomial function $\phi_m$ on $A(\g_2)$ so that the  function $a \mapsto \sum_m \phi_m(a) m(a)$ is identically zero.  It would be interesting to know whether a similar idenitty holds for $A(\mathfrak{e}_8)$.  

 %%%%%%%%%%%%%%%%%%%%%%%%%%%%%%%%%%%%%%%%%%%%%%%%%%%%%%%%%
\appendix
\section{Adjoining a unit to a $k$-algebra} \label{unitizing.sec}

We carefully record in this appendix some details concerning adjoining a multiplicative identity to a $k$-algebra, because we do not know a sufficient reference for this material.
Suppose we are given a $k$-algebra $V$ that may not contain a multiplicative identity.   That is, $V$ is a vector space over $k$ together with a $k$-bilinear map $\cdot \!: V \times V \to V$, which we call the multiplication on $V$.  Given a bilinear form $f$ on $V$, we define a \emph{unital} $k$-algebra $\Unit(V, f)$ that has underlying vector space $k \oplus V$ and multiplication
\begin{equation} \label{Phi.mult}
(x_0, x_1)(y_0, y_1) = (x_0 y_0 + f(x_1, y_1), x_0 y_1 + y_0 x_1 + x_1 \cdot y_1)
\end{equation}
for $x_0, y_0 \in k$ and $x_1, y_1 \in V$.
Then $(1, 0)$ is the multiplicative identity in $\Unit(V, f)$ and $V$ is a subalgebra.

\begin{rmk}
The construction $\Unit(V, f)$ is discussed from a different point of view in Fox's paper \cite[\S5]{Fox:ca}.  A specific example of this construction in earlier literature comes from the 196883-dimensional Griess algebra $V$, whose automorphism group is the Monster.  Fox points out (Example 5.7) that various choices of $f$ are used in the literature when authors add a unit to $V$.
\end{rmk}

In the literature, one commonly finds the more restrictive recipe $\Unit(V, 0)$ for adjoining a unit to $V$ (i.e., where $f$ is identically zero), see for example \cite[Ch.~II]{Schfr}.  This has the advantage of not introducing the parameter $f$, however it has the disadvantage of always producing a non-simple algebra --- $V$ is an ideal in $\Unit(V, 0)$ --- and therefore it does not produce popular examples of simple algebras like the $n$-by-$n$ matrices over a field, the octonions, or Albert algebras.  For more on this, see Proposition \ref{unit.simple} below.

\begin{rmk} \label{unit.nomu}
One could imagine generalizing the construction to add a further parameter $\mu \in k$ and defining $\Unit(V, f, \mu)$ to have the same underlying vector space as $\Unit(V, f)$ but with multiplication rule 
\[
(x_0, x_1)(y_0, y_1) = (x_0 y_0 + f(x_1, y_1), x_0 y_1 + y_0 x_1 + \mu x_1 y_1).
\]
It is easily seen, however, that $\Unit(V, f, \mu)$ is isomorphic to $\Unit(V, \mu^{-2} f)$, so no generality would be gained.
\end{rmk}

Throughout the remainder of this section, \emph{we assume that all algebras considered are finite-dimensional}.

\subsection*{Counit}
For a $k$-algebra $A$ with multiplicative identity $e$, we call a $k$-linear map $\counit \!: A \to k$ such that $\counit(e) = 1$ a \emph{counit}.  Such a map gives a direct sum decomposition $A = ke \oplus V$ as vector spaces where $V := \ker \counit$ and furthermore expresses $A$ as an algebra $\Unit(V, f)$ by setting
\begin{equation} \label{AV}
f(v,v') := \counit(vv') \quad \text{and} \quad v \cdot v' := vv' - f(v,v') \quad \text{for $v, v' \in V$.}
\end{equation}
Conversely, every algebra $\Unit(V, f)$ has a natural counit, namely the projection of $k \oplus V$ on  its first factor.  In this way, we may identify the notions of unital $k$-algebras with a counit on the one hand and algebras of the form $\Unit(V, f)$ (with specified $V$ and $f$) on the other.

Additionally, a counit defines a bilinear form $\tbil$ on $A$ by setting 
\begin{equation} \label{tbil.def}
\tbil(a, a') := \counit(aa') \quad \text{for all $a, a' \in A$.}
\end{equation}
  Evidently, the direct sum decomposition $A = ke \oplus V$ is an orthogonal sum with respect to $\tbil$, i.e., $\tbil(e, v) = 0$ for all $v \in V$, and the restriction of $\tbil$ to $V$ is $f$.  From this it follows that $\tbil$ is symmetric (resp.~nondegenerate) if and only if $f$ is symmetric (resp.~nondegenerate).
  
\begin{eg} \label{counit.canonical}
In the special case where the integer $\dim A$ is not zero in $k$, there is a natural counit $\counit: a \mapsto \frac1{\dim A} \Tr(M_a)$, where we have written $M_a \in \End(A)$ for the linear transformation $b \mapsto ab$.   Therefore there is a natural way of writing $A$ as $\Unit(V, f)$ for $V$ and $f$ as in \eqref{AV}.  Moreover, every algebra automorphism of $A$ preserves $\counit$, whence the group scheme $\Aut(A)$ is identified with the sub-group-scheme of $\GL(V)$ of transformations that preserve both the multiplication $\cdot$ and the bilinear form $f$.
\end{eg}

Recall that a bilinear form on a $k$-algebra is called \emph{associative} if it satisfies \eqref{t.assoc}.

\begin{prop} \label{app.assoc}
In the notation of the preceding four paragraphs, $\tbil$ is associative   (with respect to the algebra $A$) if and only if $f$ is associative (with respect to the algebra $V$).
\end{prop}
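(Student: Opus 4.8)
The plan is to unwind the definitions and reduce the equality ``$\tbil$ associative $\iff$ $f$ associative'' to a pointwise identity that one checks on a spanning set of $A = ke \oplus V$. Since both conditions are $k$-trilinear in $(a, a', a'')$, it suffices to verify the equivalence when each of $a, a', a''$ is either the identity $e$ or an element of $V$. I would organize the verification by how many of the three arguments lie in $V$.

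\begin{proof}[Proposal]
Write $a = (x_0, x_1)$, etc., in the coordinates $A = k \oplus V$ of \eqref{Phi.mult}, so that $\tbil(a,a') = x_0 y_0 + f(x_1, y_1)$ by \eqref{tbil.def} and \eqref{AV}. The condition \eqref{t.assoc} that $\tbil$ is associative and the condition that $f$ is associative on $V$ are both $k$-trilinear, so it is enough to check the equivalence with each argument ranging over the spanning set $\{e\} \cup V$. When all three arguments equal $e$, both sides of \eqref{t.assoc} equal $1$. When exactly one or exactly two of the arguments lie in $V$, a direct expansion using \eqref{Phi.mult} shows both sides of \eqref{t.assoc} are automatically equal (using only that $f$ is symmetric is not even needed here, merely bilinearity and the fact that $e$ is a two-sided unit); concretely, taking $a = e$ reduces \eqref{t.assoc} to $\tbil(a', a'') = \tbil(a', a'')$, and the cases with $a'$ or $a''$ equal to $e$ are handled the same way by the commutativity-free bookkeeping of \eqref{Phi.mult}. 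The only remaining case is $a = v$, $a' = v'$, $a'' = v''$ all in $V$. Here \eqref{Phi.mult} gives $v \am v' = (f(v,v'), v \cdot v')$, so $\tbil(v \am v', v'') = f(v \cdot v', v'')$, and symmetrically $\tbil(v, v' \am v'') = f(v, v' \cdot v'')$. Hence \eqref{t.assoc} for this triple is exactly the statement $f(v \cdot v', v'') = f(v, v' \cdot v'')$, i.e., the associativity of $f$ with respect to the product on $V$. Quantifying over all $v, v', v'' \in V$ yields the claimed equivalence.
\end{proof}

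The main (and really the only) obstacle is purely bookkeeping: one must confirm that the ``mixed'' cases, where some but not all arguments are $e$, impose no constraint, so that the entire content of the equivalence is concentrated in the all-$V$ case. I expect this to be routine given that $e = (1,0)$ acts as a strict two-sided identity in $\Unit(V,f)$ and that $\tbil$ restricted to $V$ is exactly $f$ while $\tbil(e, v) = 0$; the orthogonality of the decomposition $A = ke \oplus V$ with respect to $\tbil$ is what makes the cross terms vanish cleanly.
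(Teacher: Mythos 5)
Your proof is correct and is essentially the paper's argument: the paper simply expands $\tbil(aa',a'')-\tbil(a,a'a'')$ for general $a=(a_0,a_1)$ in the coordinates of \eqref{Phi.mult} and finds it equals $f(a_1\cdot a_1',a_1'')-f(a_1,a_1'\cdot a_1'')$ in one step, which is the same computation you carry out via trilinearity and the case analysis over $\{e\}\cup V$.
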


\begin{proof}
Write elements $a, a', a'' \in A$ as $a = (a_0, a_1)$, etc.  Then $\tbil(aa', a'') - \tbil(a, a'a'') = f(a_1 \cdot a'_1, a''_1) - f(a_1, a'_1 \cdot a''_1)$.
\end{proof}

The property of being metrized, i.e., of having a nondegenerate and associative bilinear form, has the following interesting consequence.

\begin{prop} \label{deg4}
Let $A$ be a commutative $k$-algebra where $\car k \ne 2, 3, 5$ and suppose that $A$ is metrized.  If $A$ satisfies an identity of degree $\le 4$ not implied by commutativity, then $A$ satisfies the Jordan identity $x(x^2y) = x^2(xy)$ and is power-associative.
\end{prop}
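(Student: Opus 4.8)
The plan is to leverage the classification of degree-$\le 4$ polynomial identities for commutative algebras. Over a field of characteristic $\ne 2,3,5$, the space of multilinear identities of a given degree is a module over the symmetric group, and the relevant classification (as found in, e.g., \cite{Os:id} or the references therein) tells us that the only nontrivial multilinear identity of degree $\le 4$ for a commutative algebra, up to consequences of commutativity and linearization, is the linearized Jordan identity. So the first step is to reduce the hypothesis ``$A$ satisfies some identity of degree $\le 4$ not implied by commutativity'' to ``$A$ satisfies the linearized Jordan identity,'' using that $\car k \ne 2,3,5$ permits both linearization of the given identity and the reverse passage (recovering $x(x^2y)=x^2(xy)$ from its full linearization). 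Degrees $1$ and $2$ contribute nothing, degree $3$ contributes nothing beyond commutativity for a commutative algebra, so the content is entirely in degree $4$, where the Jordan identity is the unique new relation.

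The second step is to show that the Jordan identity together with the metrized hypothesis forces power-associativity. Here the standard fact is that a commutative algebra satisfying $x^2(xy) = x(x^2y)$ — equivalently, where left multiplications $M_x$ and $M_{x^2}$ commute — is already power-associative in characteristic $\ne 2,3,5$; this is classical (Albert; see \cite[Ch.~V]{Schfr}). Actually, strictly, one wants that a commutative algebra in which $M_x$ and $M_{x^2}$ commute is power-associative, and Albert's theorem gives exactly this under the characteristic restriction. So once we have the Jordan identity in hand, power-associativity is immediate from the cited literature, and the metrized hypothesis is not even needed for this half.

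Where the metrized hypothesis genuinely enters — and where I expect the main subtlety to lie — is in \emph{ruling out} the possibility that the degree-$\le 4$ identity satisfied is something weaker than the full Jordan identity but still ``not implied by commutativity.'' The classification statement must be invoked carefully: a priori an algebra could satisfy an identity that is a proper consequence of, or genuinely different from, the Jordan identity — for instance, an identity whose multilinearization is a nonzero proper submodule of the degree-$4$ identity module. The point of metrized-ness (nondegenerate associative symmetric form) is that it converts any multilinear identity $I(x_1,\dots,x_4)=0$ into a symmetric statement about the associated $4$-form $(x_1,x_2,x_3,x_4)\mapsto \tbil(I(x_1,x_2,x_3),x_4)$, and the extra symmetry this imposes, combined with nondegeneracy, collapses the list of possible nontrivial identities down to exactly the Jordan one (this is the mechanism already used in the ``another argument'' remark after Lemma \ref{assoc} and in Proposition \ref{app.assoc}). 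So the key steps, in order, are: (1) reduce to multilinear identities; (2) use metrized-ness to pass to the associated $4$-linear form and exploit its symmetry under $S_4$; (3) invoke the $S_4$-module structure of degree-$4$ commutative identities to conclude the form, hence the identity, is a scalar multiple of the (linearized) Jordan identity; (4) delinearize using $\car k \ne 2,3,5$ to get $x(x^2y)=x^2(xy)$; (5) cite Albert's power-associativity theorem. The main obstacle will be making step (2)–(3) airtight — precisely identifying which $S_4$-isotypic component the metrized condition forces the identity into, and checking that no smaller or exotic invariant identity survives — rather than any of the surrounding routine work.
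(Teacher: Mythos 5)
There is a genuine gap, and it sits exactly where you predicted the ``main subtlety'' would lie. Your opening reduction rests on a false premise: the classification in \cite[Th.~4]{Os:id} does \emph{not} say that the only degree-$\le 4$ identity beyond commutativity is the (linearized) Jordan identity. It says that a commutative algebra satisfying such an identity must satisfy one of three specific identities: the fourth-degree power-associativity identity \eqref{pa.1}, or $2((yx)x)x + yx^3 = 3(yx^2)x$, or a third two-variable identity of degree four. None of these \emph{is} the Jordan identity, and \eqref{pa.1} in particular does not imply it (commutative power-associative algebras need not be Jordan --- the paper's own Remark \ref{pa.crazy} needs simplicity to get from power-associative to Jordan), so no amount of linearizing and delinearizing gets you from ``some degree-$\le 4$ identity not implied by commutativity'' to ``the Jordan identity'' without invoking the metrized hypothesis. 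You do acknowledge that metrizedness must enter at this point, but the mechanism you sketch --- passing to an associated multilinear form and analyzing its $S_4$-isotypic decomposition --- is not carried out, is set up with the wrong arity (your form $\tbil(I(x_1,x_2,x_3),x_4)$ has a trilinear $I$, i.e., a degree-$3$ identity, whereas all the content is in degree $4$), and you yourself flag it as the unresolved obstacle. So the proposal stops short precisely at the step that constitutes the proof.

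The paper's mechanism is more elementary and worth recording: the nondegenerate associative form gives an involution $\top$ on $\End(A)$ with $M_a^\top = M_a$, hence $(M_aM_b)^\top = M_bM_a$. Each of Osborn's three identities, written (after a suitable partial linearization in the one-variable case) as an identity among the operators $M_x$, $M_{x^2}$, $M_{x^3}$, is subtracted from its own transpose; the symmetric terms cancel and what survives is a nonzero integer multiple of $[M_x, M_{x^2}]$ (the divisions by $3$ and $5$ are where $\car k \ne 3,5$ is used). That yields $[M_x,M_{x^2}]=0$, i.e.\ the Jordan identity, in all three cases; after that, your final step --- commutative Jordan algebras in these characteristics are power-associative --- is fine and indeed needs no further use of the form.
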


\begin{proof}
  Writing $\top$ for the involution on $\End(A)$ corresponding to the nondegenerate associative bilinear form on $A$, we have $M_a^\top = M_a$ and $(M_a M_b)^\top = M_b M_a$ for all $a, b \in A$.  Note that the Jordan identity is equivalent to the assertion that $[M_a, M_{a^2}] = 0$ for all $a \in A$.

According to \cite[Th.~4]{Os:id}, $A$ satisfies \eqref{pa.1} or
\begin{enumerate}
\setcounter{enumi}{6}
\item \label{O7} $2((yx)x)x+ yx^3 = 3(yx^2)x$ or
\item \label{O8} $2(y^2x)x-2((yx)y)x - 2((yx)x)y + 2(x^2y)y - y^2x^2+(yx)^2 = 0$.
\end{enumerate}
Identity \eqref{O7} is equivalent to the statement $2M_x^3 + M_{x^3} = 3M_x M_{x^2}$.  Applying $\top$ to this identity, subtracting it, and dividing by 3, we obtain $[M_x, M_{x^2}] = 0$.

For \eqref{pa.1}, replacing $a$ with $x+y$, expanding, and taking the terms of degree 1 in $y$, we find $M_{x^3} + M_x M_{x^2} + 2 M_x^3 = 4M_{x^2} M_x$.  Applying $\top$ to this identity, subtracting it, and dividing by 5 gives $[M_x, M_{x^2}] = 0$.

Finally, if \eqref{O8} holds, then replacing $y$ with $y+z$ and taking the terms of degree 1 in $y$ and $z$, replacing $y$ with $x$ , and applying the same procedure as in previous cases again gives $[M_x, M_{x^2}] = 0$. 
\end{proof}

For comparison, the situation when $A$ is not assumed to be metrized is more complicated, see \cite{Os:deg4} and \cite{CariniHC}.

The following example provides a positive statement.

\begin{eg} \label{deg7}
Let $A$ be a commutative $k$-algebra that is metrized, and suppose that the $\Aut(A)$-module $\Sym^2 A$ has a composition series of length $d$.
Define $P_e \!: \Sym^e A \to \End(A)$ via
\[
P(a_1 a_2 \cdots a_e) := \sum_{\text{permutations $\sigma$}} M_{a_{\sigma(1)}} M_{a_{\sigma(2)}} \cdots M_{a_{\sigma(e)}}
\]
This is $\Aut(A)$-equivariant and its image $H_e$ is contained in the space of symmetric operators on $A$ with respect to $\tbil$, which we identify with $\Sym^2 A$.  Setting $H_0 := k \Id_A$ and $I_e := H_0 + H_1 + \cdots + H_e$, we obtain an increasing chain of submodules $0 \ne I_0 \subsetneq I_1 \subseteq \cdots$ so that $I_e =I_{e+1}$ for some $e < d$.  That is, a symmetric expression
\[
\sum_\sigma a_{\s(1)}(a_{\s(2)}(a_{\s(3)} \cdots (a_{\s(e+1)} b))\cdots ) \quad \in A,
\]
where each summand is a product of at most $d + 1$ terms,
can be expressed in terms of symmetric expressions in the $a$'s involving products of fewer terms.  
\end{eg}

\subsection*{Simplicity} A $k$-algebra $A$ is \emph{simple} if the only two-sided ideals in $A$ are $0$ and $A$ itself.  We prove the following criterion for simplicity.  

\begin{prop} \label{unit.simple}
Let $A$ be a unital $k$-algebra with counit $\counit$.  If
\begin{enumerate}
\item \label{unit.connected} there is a connected group scheme $G \subseteq \Aut(A)$ that stabilizes $\counit$;
\item \label{unit.KS} $k$ is not a composition factor of $\ker \counit$ as a $G$-module; and
\item $\tau$ as defined in \eqref{tbil.def} is nondegenerate,
\end{enumerate}
then $A$ is simple.
\end{prop}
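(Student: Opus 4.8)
The plan is to reduce everything to one statement about $G$-stable ideals and then apply it to the socle of $A$. Write $A = ke \oplus V$ with $V = \ker\counit$; since every algebra automorphism fixes the identity element $e$ and $G$ is assumed to stabilize $\counit$ (hence $V$), this is a direct sum decomposition of $G$-modules, with $ke$ a copy of the trivial module. I would first prove the following \emph{Lemma}: every nonzero $G$-stable two-sided ideal $J$ of $A$ equals $A$. Indeed, $\counit(J) \ne 0$, for otherwise $J \subseteq V$ and then, for $x \in J$ and $a \in A$, the product $xa$ lies in $J \subseteq \ker\counit$, so $\tbil(x, a) = \counit(xa) = 0$; this puts the nonzero subspace $J$ in the radical of $\tbil$, contradicting the nondegeneracy hypothesis. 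Next, $e \in J$: if not, then $J \cap ke = 0$, so the $G$-equivariant projection $A \to V$ with kernel $ke$ restricts to an injection $J \hookrightarrow V$, while the surjection $\counit|_J \colon J \twoheadrightarrow k$ (onto because $\counit(J) \ne 0$) exhibits the trivial module $k$ as a composition factor of $J$, hence of $V$ --- contradicting the hypothesis that $k$ is not a composition factor of $\ker\counit$. Thus $e \in J$, and then $a = ae \in A \cdot J \subseteq J$ for every $a \in A$, i.e.\ $J = A$.

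Next I would suppose, for contradiction, that $A$ is not simple. Since $A \ne 0$ (as $\counit(e) = 1$) and $A$ is finite-dimensional, the set $\mathcal M$ of minimal nonzero two-sided ideals of $A$ is nonempty, and because $A$ has a proper nonzero ideal it contains a minimal one $m_0 \ne A$. Every automorphism of $A$ permutes $\mathcal M$, so the socle $\Sigma := \sum_{m \in \mathcal M} m$ is a nonzero $G$-stable two-sided ideal; by the Lemma, $\Sigma = A$, so in particular $e \in \Sigma$. As $\Sigma$ is finite-dimensional we may write $e = \sum_{m \in \mathcal F} e_m$ with $\mathcal F \subseteq \mathcal M$ finite and $e_m \in m$. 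I then claim $\mathcal M = \mathcal F$: if $m' \in \mathcal M \setminus \mathcal F$ then $m'm \subseteq m' \cap m = 0$ for each $m \in \mathcal F$ (two distinct minimal ideals meet in $0$), so for $x \in m'$ we get $x = xe = \sum_{m \in \mathcal F} x e_m = 0$, contradicting $m' \ne 0$. Hence $\mathcal M$ is finite.

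To conclude, the action of $G$ on the finite set $\mathcal M$ is given by a homomorphism from the connected group scheme $G$ to the finite constant group scheme of permutations of $\mathcal M$, which is therefore trivial; so every $m \in \mathcal M$ is $G$-stable, and the Lemma gives $m = A$ for all $m \in \mathcal M$. This contradicts $m_0 \ne A$, and the proposition follows.

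The step I expect to be the main obstacle --- or at least to need the most care --- is the second half of the Lemma, where the composition-factor hypothesis enters: one has to argue precisely that a $G$-submodule $J$ of $A$ with $\counit(J) \ne 0$ but $J \cap ke = 0$ forces a trivial composition factor into $V$. The only other delicate point is the standard fact that a connected group scheme cannot act nontrivially on a finite set, which is exactly where connectedness in hypothesis (1) is essential: it already fails for $A = k \times k$ with $G = \Z/2$ acting by the swap, an example in which the other hypotheses hold but $A$ is not simple.
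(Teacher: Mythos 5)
Your Lemma is, in substance, the same as the first two paragraphs of the paper's proof: the paper first shows that every $G$-invariant subspace $I$ of $A$ splits as $(ke\cap I)\oplus(V\cap I)$, using hypothesis \eqref{unit.KS} exactly as you do (to kill a putative $G$-equivariant functional onto $k$ defined on a submodule of $V$), and then uses nondegeneracy of $\tbil$ together with the ideal property to produce an element of $I$ with nonzero counit and hence to force $e\in I$; your version packages the same two ingredients in the opposite order. Where you genuinely diverge is in the descent from $G$-stable ideals to arbitrary ideals. The paper forms $\sum_g gI$, applies the $G$-stable case, and then cites the argument of \cite{Popov:artin} to conclude that $I$ itself is all of $A$; you instead give a self-contained argument via the socle, and your observation that writing $e=\sum_{m\in\mathcal F}e_m$ forces the set of minimal ideals to be finite is a pleasant use of unitality that is not available in Popov's non-unital setting. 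Both routes come down to the same final fact that a connected group cannot act nontrivially on a finite set, and your closing example of $k\times k$ with the swap correctly isolates why connectedness is indispensable.

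One point deserves more care than either you or the paper gives it: the passage from ``automorphisms of $A$ permute the minimal ideals'' to ``the socle, and each minimal ideal, is $G$-stable.'' Since $G$ is a group \emph{scheme}, $G$-stability of a subspace $W$ means stability of $W\otimes R$ under $G(R)$ for every $k$-algebra $R$, and an element of $G(R)$ permutes the minimal ideals of $A\otimes R$, which need not be of the form $m\otimes R$. For the intended applications (where $G$ is smooth, so that points over an algebraic closure are dense) this is repaired by the standard device of viewing the $d$-dimensional ideals as a closed subscheme of a Grassmannian and noting that a morphism from a connected scheme whose image lies in a finite discrete set is constant; the paper's appeal to Popov hides the same issue. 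I would regard this as a technicality to acknowledge rather than a gap in your argument.
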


\begin{rmk} \label{jenner.rmk}
In the case where the multiplication on $V := \ker \counit$ is identically zero, the algebra $A$ is of the kind studied in \cite{Jenner}.
\end{rmk}

\begin{proof}[Proof of Proposition \ref{unit.simple}]
Put $V := \ker \counit$.  We first claim that every $G$-invariant subspace $I$ of $A$ is a direct sum $I = (ke \cap I) \oplus (V \cap I)$.  If the restriction of the projection $1 - \counit: A \to V$ to $I$ has a kernel, then $\ker (1 - \counit) = ke$ is contained in $I$ and the claim is clear.  Otherwise, $1 - \counit$ is injective and $I = \{ (\pi(w), w) \}$ for $w \in W := (1 - \counit)(I)$ and some $G$-equivariant linear map $\pi \!: W \to k$.  By \eqref{unit.KS}, however, $\pi$ must be zero, and the claim follows.

We next verify that every nonzero and $G$-invariant ideal $I$ of $A$ is equal to $A$.  By the preceding paragraph, we may suppose that there is a nonzero $v \in V \cap I$.  Since $\tbil$ is nondegenerate, there is an $a \in A$ so that $0 \ne \tau(v, a) = \counit(va)$.  That is, $va$ is a nonzero element of $ke \cap I$, whence $I = A$.

Now let $I$ be a nonzero ideal in $A$.  The sum of $G$-conjugates of $I$, $\sum_g gI$ is a nonzero and $G$-invariant ideal, so it equals $A$.  We conclude that $I$ itself equals $A$ by arguing as in the proof of \cite[Th.~5]{Popov:artin}, which concerns the analogous case of a non-unital algebra that is an irreducible representation of a connected group.
\end{proof}

\subsection*{Power-associativity}
A $k$-algebra $A$ is \emph{power-associative} if the subalgebra generated by any element $a \in A$ is associative.  It is \emph{strictly power-associative} if $A \ot_k F$ is power-associative for every field $F$ containing $k$.  We now focus on the case where $A$ is commutative, as is the algebra $A(\g)$ elsewhere in this paper and as is the algebra $\Unit(V, f)$ when $V$ is commutative and $f$ is symmetric.  

If $A$ is power-associative, then in particular
\begin{equation} \label{pa.1}
a(a(aa)) - (aa)(aa) = 0 \quad \text{for all $a \in A$.}
\end{equation}
When $\car k \ne 2, 3, 5$, \eqref{pa.1} is equivalent to $A$ being strictly power-associative \cite[Th.~1]{Alb:pa}, cf.~\cite[p.~364]{Kokoris}.

The property of whether $\Unit(V, f)$ is strictly power-associative is rather constrained.    In the proposition below, we write $v^2$ for the element $v \cdot v \in V$.

\begin{prop} \label{unit.unique}
Suppose $f$ is not alternating.  If the polynomial map $v \mapsto v \wedge v^2 \in \wedge^2 V$ is not identically zero, then there is at most one $c \in k$ so that $\Unit(V, cf)$ is strictly power-associative.
\end{prop}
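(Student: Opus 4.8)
The strategy is to compute the power-associativity obstruction \eqref{pa.1} for $\Unit(V, cf)$ explicitly in terms of the data $(V, \cdot, f)$ and extract the dependence on the parameter $c$, showing it is polynomial of low degree in $c$ with a nonzero leading coefficient under the stated hypotheses; a polynomial with a nonzero coefficient has at most finitely many — here at most one relevant — roots. First I would write a general element of $\Unit(V, cf)$ as $a = (x_0, x_1)$ with $x_0 \in k$, $x_1 \in V$, and expand $a(a(aa)) - (aa)(aa)$ using \eqref{Phi.mult} with $f$ replaced by $cf$. Since scaling $a$ by a unit of $k$ scales this expression homogeneously, and since the identity \eqref{pa.1} is preserved under adding multiples of the identity element $(1,0)$ (as $(1,0)$ is central and power-associative and \eqref{pa.1} is a consequence of power-associativity, which passes to subalgebras and is unaffected by adjoining the already-present unit — more carefully, one checks directly that \eqref{pa.1} evaluated at $(x_0, x_1)$ depends only on $x_1$ up to terms that cancel), it suffices to evaluate at $a = (0, v)$ for $v \in V$.

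**Key computation.** With $a = (0,v)$ one has $aa = (c f(v,v), v^2)$, and iterating \eqref{Phi.mult} produces $a(a(aa)) - (aa)(aa)$ as a pair whose $V$-component $\Phi_c(v) \in V$ is a polynomial in $c$ with coefficients that are polynomial maps in $v$. Tracking the powers of $c$: each application of the product can contribute at most one factor of $c$ (from the scalar slot $f(x_1,y_1)$) before that scalar gets multiplied into a $V$-vector in the next step. A direct expansion shows $\Phi_c(v)$ has the form $\Phi_c(v) = c^2\,\phi_2(v) + c\,\phi_1(v) + \phi_0(v)$ where $\phi_0$ is the obstruction for $V$ itself (the $c = 0$ case), $\phi_2(v)$ is a multiple of $f(v,v)\,\big(v \cdot v^2 - (\text{scalar})\,v\big)$-type terms arising from $f(v,v)\,f(v^2, v)$ and $f(v,v^2)$-type couplings, and $\phi_1$ collects the cross terms. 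The crucial point is to identify which hypothesis controls the top coefficient: when $f$ is not alternating, $f(v,v)$ is not identically zero, and when $v \mapsto v \wedge v^2$ is not identically zero, $v^2$ is not identically a scalar multiple of $v$ as polynomial maps, so the product $f(v,v)\cdot(v \wedge v^2)$ — which is the obstruction to the degree-$c^2$ term lying in $kv$ — is not identically zero. One then argues that $\Phi_{c_1}(v) = \Phi_{c_2}(v) = 0$ identically for $c_1 \ne c_2$ would force the difference $\Phi_{c_1} - \Phi_{c_2}$, divisible by $(c_1 - c_2)$ and linear in $c_1 + c_2$ plus the $c^2$ coefficient, to vanish, and then that $\phi_2 \equiv 0$, contradicting the two non-vanishing hypotheses.

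**Main obstacle.** The delicate step is the bookkeeping that isolates $\phi_2$ and proves it is \emph{not} identically zero given the two hypotheses — i.e., showing that the $c^2$-coefficient of $\Phi_c(v)$, when paired against $\wedge^2 V$ by $w \mapsto v \wedge w$, equals (a nonzero rational constant times) $f(v,v)\,(v \wedge v^2)$ plus terms that do not interfere, so that its non-vanishing follows from "$f$ not alternating" and "$v \wedge v^2 \not\equiv 0$" \emph{jointly}. A cleaner route to the final conclusion, avoiding the trichotomy "is $\phi_2 \equiv 0$?", is: suppose $c_1 \ne c_2$ both make $\Unit(V, c_i f)$ strictly power-associative. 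Restrict to the rank-one direction: fix $v$ with $f(v,v) \ne 0$ (exists since $f$ is not alternating) and with $v \wedge v^2 \ne 0$ (exists since that map is not identically zero and $k$ is infinite — or pass to $k(t)$); this is possible simultaneously because the non-vanishing loci are dense open. Evaluate $\Phi_{c}(v)$ along $t \mapsto tv$: homogeneity makes $\Phi_c(tv) = t^3 \Phi_c(v) + (\text{lower, from the } c f(v,v) \text{ scalar being degree } t^2)$; the precise weighting forces the coefficient of $c^2$ within $\Phi_c(v)$ to be a fixed nonzero vector in the plane spanned by $v$ and $v^2$ with nonzero $v^2$-component, hence $\Phi_{c_1}(v) = \Phi_{c_2}(v) = 0$ is impossible. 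I would present this second form, since it converts the problem into a finite linear-algebra statement in the two-dimensional space $\mathrm{span}(v, v^2)$ and makes the role of each hypothesis transparent.
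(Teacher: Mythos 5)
Your overall strategy --- expand the obstruction \eqref{pa.1} in $\Unit(V,cf)$ and study its dependence on the parameter $c$ --- is the same as the paper's, but your central computation is wrong in a way that breaks the argument. Writing $a=(0,v)$, one has $aa=(cf(v,v),\,v^2)$, then $a(aa)=(cf(v,v^2),\,cf(v,v)v+v\cdot v^2)$, and from \eqref{Phi.mult} the $V$-component of $a(a(aa))-(aa)(aa)$ is
\[
\bigl(v\cdot(v\cdot v^2)-v^2\cdot v^2\bigr)\;+\;c\,\bigl(f(v,v^2)\,v-f(v,v)\,v^2\bigr),
\]
which is \emph{affine-linear} in $c$: a second factor of $c$ can only arise in the scalar slot (as $c^2f(v,v)^2$, appearing identically in both $a(a(aa))$ and $(aa)(aa)$), and those contributions cancel. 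So your $\phi_2$ is identically zero, and both versions of your argument collapse: the intended contradiction ``$\phi_2\equiv 0$ contradicts the two non-vanishing hypotheses'' is vacuous because $\phi_2\equiv 0$ holds unconditionally, and the ``cleaner route'' analyzes ``the coefficient of $c^2$ within $\Phi_c(v)$,'' which does not exist. There is also a structural problem independent of the computation: had the obstruction genuinely been quadratic in $c$ with nonzero leading coefficient, you would only obtain ``at most two values of $c$,'' not the asserted ``at most one''; and your deduction of $\phi_2\equiv 0$ from two vanishing values $c_1\ne c_2$ only yields $(c_1+c_2)\phi_2+\phi_1=0$, not $\phi_2=0$.

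The correct version of your plan is the paper's proof: the relevant coefficient is that of $c^1$, namely $y=f(v,v^2)\,v-f(v,v)\,v^2$ as in \eqref{pa.2}. The hypothesis that $v\mapsto v\wedge v^2$ is not identically zero makes $v$ and $v^2$ linearly independent for generic $v$, and the hypothesis that $f$ is not alternating makes the $v^2$-coordinate $-f(v,v)$ of $y$ generically nonzero; hence $y$ is not the zero polynomial map, and an affine-linear function $x+cy$ of $c$ with $y\ne 0$ can vanish identically for at most one value of $c$. Your reduction to $a=(0,v)$ is harmless --- the $a_0$-dependent terms do cancel, as the paper's expansion with general $(a_0,a_1)$ shows --- but that cancellation has to be verified by computation rather than by the informal appeal to centrality of $(1,0)$.
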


\begin{proof}
We focus on \eqref{pa.1} for $a \in \Unit(V, cf)$.  Writing out $a = (a_0, a_1)$ and expanding $a(a(aa)) - (aa)(aa)$, we find $(c(f(a_1, a_1 ( a_1^2)) - f(a_1^2, a_1^2)), x + cy)$ for 
\begin{equation} \label{pa.2}
x = a_1(a_1a_1^2) - a_1^2 a_1^2 \quad \text{and} \quad
y = f(a_1, a_1^2) a_1 - f(a_1, a_1) a_1^2.
\end{equation}
By hypothesis, $a_1$, $a_1^2$ are linearly independent for generic $a_1 \in V$.  And $f(a_1, a_1)$ is also nonzero for generic $a_1 \in V$ because $f$ is not alternating, so we conclude that $y$ is not the zero polynomial on $V$.  It follows that the polynomial function $x + cy$ on $V$ is identically zero for at most one value of $c \in k$.
\end{proof}

\begin{rmk} 
For $a = (a_0, a_1) \in \Unit(V,f)$, we have $(1,0) \wedge a \wedge a^2 = (1,0) \wedge (0,a_1) \wedge (0, a_1^2)$ in $\wedge^3 \Unit(V, f)$, where the squaring operation on the left side is relative to the multiplication on $\Unit(V, f)$ and on the right side is relative to the multiplication $\cdot$ on $V$.  As a consequence, the hypothesis of Proposition \ref{unit.unique} can be phrased in the equivalent form: \emph{the polynomial map $a \mapsto (1,0) \wedge a \wedge a^2$ is not zero.}
\end{rmk}

\begin{rmk} \label{octonions.rmk}
If the squaring map $v \mapsto v^2$ is the zero function, the identities $a^2 a = a a^2$ and \eqref{pa.1} hold in $\Unit(V, cf)$.  If $\car k = 0$, it follows that $\Unit(V, cf)$ is strictly power-associative for every $c \in k$ by \cite[Th.~2]{Alb:pa}.  
\end{rmk}

The following lemma allows one to apply Proposition \ref{unit.unique} in situations such as that in Proposition \ref{unit.simple}, by taking $F(v) = v^2$.
\newcommand{\Fb}{\overline{F}}
\begin{lem}\label{lin.ind}
Suppose $\dim V \ge 2$ and let $F$ be a $G$-equivariant polynomial function $V \to V$ that is homogeneous of degree $d \ge 1$.  If the polynomial map $v \mapsto v \wedge F(v)$ is identically zero, then  there is a $G$-invariant polynomial function $\Fb \!: V \to k$ that is homogeneous of degree $d - 1$ and $F(v) = \Fb(v)v$ for all $v \in V \otimes K$ for every extension $K$ of $k$.
\end{lem}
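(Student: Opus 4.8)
The plan is to work pointwise: at each $v \in V$, the hypothesis $v \wedge F(v) = 0$ says that $F(v)$ is a scalar multiple of $v$ whenever $v \neq 0$, so I want to show that scalar is itself polynomial in $v$ and $G$-invariant. First I would fix a linear functional $\ell \in V^*$ and consider the two polynomial functions $v \mapsto \ell(v)$ and $v \mapsto \ell(F(v))$ on $V$. For $v$ outside the hyperplane $\ell = 0$, the relation $v \wedge F(v) = 0$ forces $F(v) = \bigl(\ell(F(v))/\ell(v)\bigr) v$; in particular the rational function $\ell(F(v))/\ell(v)$ is independent of the choice of $\ell$ (any two choices agree on the dense open set where both denominators are nonzero). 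Call this rational function $\Fb$; it is homogeneous of degree $d - 1$ since $\ell(F(v))$ is homogeneous of degree $d$ and $\ell(v)$ of degree $1$.

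Next I would argue that $\Fb$ is in fact a polynomial, not merely a rational function. The cleanest way: pick a basis $e_1, \dots, e_n$ of $V$ (recall $\dim V \ge 2$) and use the coordinate functionals $\ell = e_i^*$. Then $\Fb = e_i^*(F(v))/e_i^*(v)$ as rational functions for each $i$, which means the polynomial $e_i^*(F(v))$ is divisible by the coordinate polynomial $e_i^*(v)$ in the polynomial ring $k[V]$ (a UFD), and the quotient is $\Fb$. Hence $\Fb \in k[V]$. (Alternatively, since the $e_i^*(v)$ are pairwise coprime irreducibles, $\Fb$ written with denominator $e_1^*(v)$ must actually have trivial denominator because its numerator is also divisible by $e_2^*(v)$, etc.; either phrasing works.) This gives a homogeneous polynomial $\Fb$ of degree $d-1$ with $F(v) = \Fb(v)\,v$ as an identity of polynomial maps $V \to V$ over $k$, hence over every extension $K$ by base change.

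Finally I would check $G$-invariance of $\Fb$. For $g \in G$ and generic $v$, applying $g$ to $F(v) = \Fb(v) v$ and using $G$-equivariance of $F$ gives $F(gv) = \Fb(v)\,gv$; on the other hand $F(gv) = \Fb(gv)\,gv$. Since $gv \ne 0$ generically, $\Fb(gv) = \Fb(v)$ on a dense set, hence identically, so $\Fb$ is $G$-invariant. (If one prefers to work scheme-theoretically with $G$ a group scheme rather than an abstract group, one runs the same comparison over the coordinate ring of $G$, i.e. with the universal point, which is harmless since everything in sight is polynomial.)

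The only genuinely delicate point is the passage from "$\Fb$ is a rational function regular off a hyperplane" to "$\Fb$ is a polynomial", and it is really just the UFD/coprimality argument above together with the observation that using two distinct coordinate functionals exhibits the same function with two coprime denominators. The degree $d \ge 1$ hypothesis is used only to know $\Fb$ has degree $d - 1 \ge 0$ so the statement makes sense, and $\dim V \ge 2$ guarantees we actually have two distinct coordinate functionals to compare (for $\dim V = 1$ there is nothing to prove anyway, but then there is no second functional for the coprimality trick).
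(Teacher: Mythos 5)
Your proposal is correct and follows essentially the same route as the paper: both define $\Fb$ implicitly off the coordinate hyperplanes as $f_i/x_i$, use the relation $x_i f_j = x_j f_i$ together with coprimality of distinct coordinate functionals in the UFD $k[V]$ to conclude $x_i \mid f_i$, and then extend the identity $F(v) = \Fb(v)v$ from a dense open set to all of $V$. The only difference is cosmetic (you spell out the $G$-invariance and base-change steps that the paper leaves implicit).
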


\begin{proof}
There is a $G$-invariant function $\Fb \!: V \setminus \{ 0 \} \to k$ defined implicitly by the equation $\Fb(v) v = F(v)$.  We argue that it is a polynomial function on $V$.

Fix a basis $x_1, \ldots, x_n$ of $V^*$.  The $i$-th coordinate $x_i\vert_{F(v)}$ of $F(v)$ is $f_i(v)$ for some homogeneous degree $d$ polynomial $f_i \in k[x_1, \ldots, x_n]$. 
On the open set $U_i$ where $x_i$ does not vanish, $\Fb = f_i/x_i$.  For $i \ne j$, $f_i/x_i$ and $f_j/x_j$ agree on $U_i \cap U_j$, so $x_i f_j = x_j f_i$ in the polynomial ring.  As $x_i$ does not divide $x_j$, it must divide $f_i$.  Setting $\bar{f}_i := f_i / x_i$, the polynomial function  $v \mapsto F(v) - \bar{f}_i(v) v$ is zero on $U_i$, so it is zero on $V$, i.e., $\Fb \!: V \to k$ is a polynomial.
\end{proof}

%\bibliographystyle{amsalpha}
%\bibliography{bibfile}

\begin{thebibliography}{CHP88}

\bibitem[Alb48]{Alb:pa}
A.A. Albert, \emph{On the power-associativity of rings}, Summa Brasil. Math.
  \textbf{2} (1948), no.~2, 21--32.

\bibitem[Alb50]{Alb:thpa}
\bysame, \emph{A theory of power-associative commutative algebras}, Trans.
  Amer. Math. Soc. \textbf{69} (1950), 503--527. \MR{38959}

\bibitem[BCP90]{Magma}
W.~Bosma, J.~Cannon, and C.~Playoust, \emph{The {M}agma algebra system {I}: the
  user language}, J. Symbolic Computation \textbf{9} (1990), 677--698.

\bibitem[BH14]{BaezH:rolling}
J.C. Baez and J.~Huerta, \emph{$\mathrm{G}_2$ and the rolling ball}, Trans.
  Amer. Math. Soc. \textbf{366} (2014), no.~10, 5257--5293. \MR{3240924}

\bibitem[Bou02]{Bou:g4}
N.~Bourbaki, \emph{Lie groups and {L}ie algebras: Chapters 4--6},
  Springer-Verlag, Berlin, 2002.

\bibitem[Bou05]{Bou:g7}
\bysame, \emph{Lie groups and {L}ie algebras: Chapters 7--9}, Springer-Verlag,
  Berlin, 2005.

\bibitem[BT65]{BoTi}
A.~Borel and J.~Tits, \emph{Groupes r\'{e}ductifs}, Publ. Math. IHES
  \textbf{27} (1965), 55--150.

\bibitem[Car14]{Cartan:real}
E.~Cartan, \emph{Les groupes r\'eels simples finis et continus}, Ann. Sci.
  \'Ecole Norm. Sup. (3) \textbf{31} (1914), 265--355.

\bibitem[Cd96]{CohendeMan}
A.M. Cohen and R.~{de Man}, \emph{Computational evidence for {D}eligne's
  conjecture regarding exceptional {L}ie groups}, C. R. Acad. Sci. Paris
  S\'{e}r. I Math. \textbf{322} (1996), no.~5, 427--432. \MR{1381778}

\bibitem[CHP88]{CariniHC}
L.~Carini, I.R. Hentzel, and G.M. {Piacentini Cattaneo}, \emph{Degree four
  identities not implied by commutativity}, Comm. Algebra \textbf{16} (1988),
  no.~2, 339--356. \MR{929123}

\bibitem[Del96]{Del}
P.~Deligne, \emph{La s\'erie exceptionnelle de groupes de {L}ie}, C. R. Acad.
  Sci. Paris S\'er. I Math. \textbf{322} (1996), no.~4, 321--326.

\bibitem[DG02]{DeligneGross}
P.~Deligne and B.H. Gross, \emph{On the exceptional series, and its
  descendants}, C. R. Math. Acad. Sci. Paris \textbf{335} (2002), no.~11,
  877--881.

\bibitem[DG11]{SGA3.3:new}
M.~Demazure and A.~Grothendieck, \emph{Sch{\'{e}}mas en groupes {III}:
  {S}tructure des schemas en groupes reductifs}, Soci\'et\'e Math\'ematique de
  France, 2011, re-edition edited by P. Gille and P. Polo.

\bibitem[DV20]{DeMedtsVC}
T.~{De Medts} and M.~{Van Couwenberghe}, \emph{Non-associative {F}robenius
  algebras for simply laced {C}hevalley groups}, arxiv:2005.02625, May 2020.

\bibitem[Dyn57]{Dynk:ssub}
E.B. Dynkin, \emph{Semisimple subalgebras of semisimple {L}ie algebras}, Amer.
  Math. Soc. Transl. (2) \textbf{6} (1957), 111--244, [Russian original: Mat.\
  Sbornik N.S.\ \textbf{30(72)} (1952), 349--462].

\bibitem[Eng00]{Engel}
F.~Engel, \emph{Ein neues, dem linearen {K}omplexe analoges {G}ebilde},
  Berichte {\"{u}}ber die Verhandlungen der K{\"{o}}niglich S{\"{a}}chsischen
  Gesellschaft der Wissenschaften zu Leipzig. Mathematisch-Physische Klasse
  \textbf{52} (1900), 63--76, 220--239.

\bibitem[Fox20]{Fox:ca}
D.J.F. Fox, \emph{Commutative algebras with nondegenerate invariant trace form
  and trace-free multiplication endomorphisms}, arxiv:2004.12343, April 2020.

\bibitem[Gar16]{G:e8}
S.~Garibaldi, \emph{${E}_8$, the most exceptional group}, Bull. Amer. Math.
  Soc. (N.S.) \textbf{53} (2016), no.~4, 643--671.

\bibitem[GG15]{GG:simple}
S.~Garibaldi and R.M. Guralnick, \emph{Simple groups stabilizing polynomials},
  Forum of Mathematics: Pi \textbf{3} (2015), e3 (41 pages).

\bibitem[Her83]{Herz:alt}
C.~Herz, \emph{Alternating {$3$}-forms and exceptional simple {L}ie groups of
  type {$G\sb{2}$}}, Canad. J. Math. \textbf{35} (1983), no.~5, 776--806.

\bibitem[His84]{Hiss}
G.~Hiss, \emph{Die adjungierten {D}arstellungen der {C}hevalley-{G}ruppen},
  Arch. Math. (Basel) \textbf{42} (1984), 408--416.

\bibitem[Jac68]{Jac:J}
N.~Jacobson, \emph{Structure and representations of {J}ordan algebras}, Coll.\
  Pub., vol.~39, Amer. Math. Soc., Providence, RI, 1968.

\bibitem[Jan03]{Jantzen}
J.C. Jantzen, \emph{Representations of algebraic groups}, second ed., Math.
  Surveys and Monographs, vol. 107, Amer. Math. Soc., 2003.

\bibitem[Jen60]{Jenner}
W.E. Jenner, \emph{On non-associative algebras associated with bilinear forms},
  Pacific J. Math. \textbf{10} (1960), no.~2, 573--575.

\bibitem[Kil89]{Killing2}
W.~Killing, \emph{Die {Z}usammensetzung der stetigen endlichen
  {T}ransformationsgruppen. {Z}weiter {T}heil}, Math. Ann. \textbf{33} (1889),
  1--48.

\bibitem[Kok54]{Kokoris}
L.A. Kokoris, \emph{New results on power-associative algebras}, Trans. Amer.
  Math. Soc. \textbf{77} (1954), 363--373. \MR{65543}

\bibitem[Kok56]{Kokoris:Annals}
\bysame, \emph{Simple power-associative algebras of degree two}, Ann. of Math.
  (2) \textbf{64} (1956), 544--550. \MR{81273}

\bibitem[KT19]{KrasnovTkachev}
Y.~Krasnov and V.G. Tkachev, \emph{Variety of idempotents in nonassociative
  algebras}, Topics in Clifford Analysis: Special Volume in Honor of Wolfgang
  Spr{\"o}{\ss}ig (S.~Bernstein, ed.), Springer International Publishing, 2019,
  pp.~405--436.

\bibitem[LM02]{LM:tri}
J.M. Landsberg and L.~Manivel, \emph{Triality, exceptional {L}ie algebras, and
  {D}eligne dimension formulas}, Adv. Math. \textbf{171} (2002), 59--85.

\bibitem[LM06]{LandsMan:univ}
\bysame, \emph{A universal dimension formula for complex simple {L}ie
  algebras}, Adv. Math. \textbf{201} (2006), no.~2, 379--407. \MR{2211533}

\bibitem[L{\"u}b01]{luebeck}
F.~L{\"u}beck, \emph{Small degree representations of finite {C}hevalley groups
  in defining characteristic}, LMS J. Comput. Math. \textbf{4} (2001),
  135--169.

\bibitem[McN98]{McNinch:ss}
G.J. McNinch, \emph{Dimensional criteria for semisimplicity of
  representations}, Proc. London Math. Soc. (3) \textbf{76} (1998), 95--149.

\bibitem[Mey83]{Meyberg:Okubo}
K.~Meyberg, \emph{Okubo's quartic trace formula for exceptional {L}ie
  algebras}, J. Algebra \textbf{84} (1983), 279--284.

\bibitem[Oku79]{Okubo:quartic}
S.~Okubo, \emph{Quartic trace identity for exceptional {L}ie algebras}, J.
  Math. Phys. \textbf{20} (1979), 586--593.

\bibitem[OS78]{OS}
P.~Orlik and L.~Solomon, \emph{Singularities {II}: automorphisms of forms},
  Math. Ann. \textbf{231} (1978), 229--240.

\bibitem[Osb65]{Os:id}
J.M. Osborn, \emph{Identities of non-associative algebras}, Canad. J. Math.
  \textbf{17} (1965), 78--92.

\bibitem[Osb68]{Os:deg4}
\bysame, \emph{Commutative non-associative algebras and identities of degree
  four}, Canadian J. Math. \textbf{20} (1968), 769--794. \MR{241482}

\bibitem[Pop95]{Popov:artin}
V.L. Popov, \emph{An analogue of {M}. {A}rtin's conjecture on invariants for
  nonassociative algebras}, Lie groups and {L}ie algebras: {E}. {B}. {D}ynkin's
  {S}eminar, Amer. Math. Soc. Transl. Ser. 2, vol. 169, Amer. Math. Soc.,
  Providence, RI, 1995, pp.~121--143.

\bibitem[Sch94]{Schfr}
R.~Schafer, \emph{An introduction to nonassociative algebras}, Dover, 1994.

\bibitem[Ser02]{SeCG}
J-P. Serre, \emph{Galois cohomology}, Springer-Verlag, 2002, originally
  published as \emph{Cohomologie galoisienne} (1965).

\bibitem[Spr73]{Sp:jord}
T.A. Springer, \emph{Jordan algebras and algebraic groups}, Ergebnisse der
  Mathematik und ihrer Grenzgebiete, vol.~75, Springer-Verlag, 1973.

\bibitem[Spr98]{Sp:LAG}
\bysame, \emph{Linear algebraic groups}, second ed., Birkh\"{a}user, 1998.

\bibitem[SS70]{SpSt}
T.A. Springer and R.~Steinberg, \emph{Conjugacy classes}, Seminar on Algebraic
  Groups and Related Finite Groups (The Institute for Advanced Study,
  Princeton, N.J., 1968/69), Lecture Notes in Math., vol. 131, Springer,
  Berlin, 1970, [= \emph{Robert Steinberg: collected papers}, pp.~293--394],
  pp.~167--266.

\bibitem[Ste16]{St}
R.~Steinberg, \emph{Lectures on {C}hevalley groups}, University Lecture Series,
  no.~66, American Math. Soc., Providence, RI, 2016.

\bibitem[Sut98]{Suter}
R.~Suter, \emph{Coxeter and dual {C}oxeter numbers}, Comm. Algebra \textbf{26}
  (1998), no.~1, 147--153. \MR{1600666}

\bibitem[Tit71]{Ti:R}
J.~Tits, \emph{Repr\'esentations lin\'eaires irr\'eductibles d'un groupe
  r\'eductif sur un corps quelconque}, J. Reine Angew. Math. \textbf{247}
  (1971), 196--220.

\bibitem[Tka18]{Tkachev:half}
V.G. Tkachev, \emph{The universality of one half in commutative nonassociative
  algebras with identities}, arxiv:1808.03808, August 2018.

\bibitem[Vog99]{Vogel}
P.~Vogel, \emph{The universal {L}ie algebra}, unpublished, June 1999.

\bibitem[Win77]{WinterSL2}
P.W. Winter, \emph{On the modular representation theory of the two-dimensional
  special linear group over an algebraically closed field}, J. London Math.
  Soc. (2) \textbf{16} (1977), no.~2, 237--252.

\end{thebibliography}

\providecommand{\bysame}{\leavevmode\hbox to3em{\hrulefill}\thinspace}
\providecommand{\MR}{\relax\ifhmode\unskip\space\fi MR }
% \MRhref is called by the amsart/book/proc definition of \MR.
\providecommand{\MRhref}[2]{%
  \href{http://www.ams.org/mathscinet-getitem?mr=#1}{#2}
}
\providecommand{\href}[2]{#2}

\end{document}